\documentclass[12pt,reqno,intlimits]{amsart}

\textwidth140truemm
\textheight220truemm

\usepackage{amssymb,amscd}

\newcommand{\nc}{\newcommand}

\nc{\al}{\alpha}
\nc{\bt}{\beta}
\nc{\gm}{\gamma}
\nc{\Gm}{\Gamma}
\nc{\dl}{\delta}
\nc{\Dl}{\Delta}
\nc{\lb}{\lambda}
\nc{\om}{\omega}
\nc{\Om}{\Omega}
\nc{\sg}{\sigma}
\nc{\tht}{\theta}
\nc{\Tht}{\Theta}
\nc{\vf}{\varphi}
\nc{\ve}{\varepsilon}
\nc{\zt}{\zeta}

\nc{\bF}{\mathbb{F}}
\nc{\bR}{\mathbb{R}}
\nc{\bL}{{\bf L}}
\nc{\bP}{{\bf P}}

\nc{\cA}{\mathcal{A}}
\nc{\cB}{\mathcal{B}}
\nc{\cE}{\varepsilon} 
\nc{\cF}{\mathcal{F}}
\nc{\cH}{\mathcal{H}}
\nc{\cJ}{\mathcal{J}}
\nc{\cK}{\mathcal{K}}
\nc{\cM}{\mathcal{M}}
\nc{\cN}{\mathcal{N}}
\nc{\cL}{\mathcal{L}}
\nc{\cX}{\mathcal{X}}
\nc{\cP}{\mathcal{P}}
\nc{\cV}{\mathcal{V}}
\nc{\cv}{v}

\nc{\pa}{\partial}
\nc{\sbs}{\subset}
\nc{\sbseq}{\subseteq}
\nc{\wt}{\widetilde}
\nc{\wh}{\widehat}
\nc{\Ra}{\Rightarrow}
\nc{\Lra}{\Leftrightarrow}
\nc{\LLra}{\Longleftrightarrow}
\nc{\la}{\langle}
\nc{\ra}{\rangle}
\nc{\ua}{\uparrow}
\nc{\str}{\stackrel}
\nc{\ol}{\overline}
\nc{\ul}{\underline}
\nc{\os}{\overset}
\nc{\us}{\underset}
\nc{\fa}{\forall}
\nc{\ds}{\displaystyle}
\nc{\XA}{\{X\to\}}
\nc{\Pas}{P\text{-\it a.s.}}
\nc{\vnth}{\varnothing}

\nc{\loc}{\operatorname{loc}}
\nc{\rank}{\operatorname{rank}}
\nc{\tr}{\operatorname{tr}}
\nc{\diag}{\operatorname{diag}}
\nc{\Ball}{\operatorname{Ball}}
\newcommand{\const}{\operatorname{const}}
\newcommand{\sign}{\operatorname{sign}}

\newtheorem{theorem}{Theorem}[subsection]
\newtheorem{lemma}{Lemma}[subsection]
\newtheorem{corollary}{Corollary}[subsection]
\newtheorem{proposition}{Proposition}[subsection]

\theoremstyle{definition}
\newtheorem{definition}{Definition}[subsection]
\newtheorem{example}{Example}

\theoremstyle{remark}
\newtheorem{remark}{Remark}[subsection]

\numberwithin{equation}{subsection}

\subjclass{62L20, 60H10, 60H30}
\keywords{Stochastic approximation, Robbins--Monro type SDE, semimartingale convergence sets, ``standard'' and ``nonstandard'' representations, recursive estimation, Polyak's weighted averaging procedures}

\begin{document}

\title[Semimartingale Stochastic Approximation Procedure]{Semimartingale Stochastic Approximation Procedure and Recursive Estimation}

\author[N. Lazrieva, T. Sharia and T. Toronjadze]{N. Lazrieva$^{1),\,2)}$, T. Sharia$^{3)}$ and T. Toronjadze$^{1),\,2)}$}

\maketitle

\begin{center}
$^{1)}$ Georgian--American University, Business School,
3, Alleyway II, Chavchavadze Ave. 17\,a, Tbilisi, Georgia, E-mail: toronj333@yahoo.com \\[2mm]
$^{2)}$ A. Razmadze Mathematical Institute, 1, M. Aleksidze St., Tbilisi, Georgia \\[2mm]
$^{3)}$ Department of Mathematics, Royal Holloway, University of London, Egham, Surrey TW200EX, E-mail: t.sharia@rhul.ac.uk
\end{center}

\begin{abstract}
The semimartingale stochastic approximation procedure, na\-me\-ly, the Robbins--Monro type SDE is introduced which naturally includes both generalized stochastic approximation algorithms with martingale no\-ises and recursive parameter estimation procedures for statistical models associated with semimartingales. General results concerning the asymptotic behaviour of the solution are presented. In particular, the conditions ensuring the convergence, rate of convergence and asymptotic expansion are established. The results concerning the Polyak weighted averaging procedure are also presented.
\end{abstract}

\section*{Contents}

\begin{enumerate}
\item[0.] Introduction
\dotfill{ 2}

\item[1.] Convergence
\dotfill{ 6}

\begin{enumerate}
\item[1.1.] The semimartingales convergence sets
\dotfill{ 6}

\item[1.2.] Main theorem
\dotfill{ 9}

\item[1.3.] Some simple sufficient conditions for (I) and (II)
\dotfill{ 12}

\item[1.4.] Examples
\dotfill{ 13}

\item[1.] Recursive parameter estimation procedures for statistical \\models associated with semimartingale
\dotfill{ 13}

\item[2.] Discrete time
\dotfill{ 20}

\item[3.] RM Algorithm with Deterministic Regression Function
\dotfill{ 23}

\end{enumerate}

\item[2.] Rate of Convergence and Asymptotic Expansion
\dotfill{ 26}

\begin{enumerate}
\item[2.1.] Notation and preliminaries
\dotfill{ 26}

\item[2.2.] Rate of convergence
\dotfill{ 28}

\item[2.3.] Asymptotic expansion
\dotfill{ 41}

\end{enumerate}

\item[3.] The Polyak Weighted Averaging Procedure
\dotfill{ 46}

\begin{enumerate}
\item[3.1.] Preliminaries
\dotfill{ 48}

\item[3.2.] Asymptotic properties of $\ol{z}$. ``Linear'' Case
\dotfill{ 54}

\item[3.3.] Asymptotic properties of $\ol{z}$. General case
\dotfill{ 58}

\end{enumerate}
\item[] References
\dotfill{ 60}

\end{enumerate}

\addtocounter{section}{-1}
\numberwithin{equation}{section}
\section{Introduction}

In 1951 in the famous paper of H. Robbins and S. Monro ``Stochastic approximation method'' \cite{36n} a method was created to address the problem of location of roots of functions, which can only be observed with random errors. In fact, they carried in the classical Newton's method a ``stochastic'' component.

This method is known in probability theory as the Robbins--Monro (RM) stochastic approximation algorithm (procedure).

Since then, a considerable  amount of works has been done to relax assumptions on the regression functions, on the structure of the measurement errors as well (see, e.g., \cite{17n}, \cite{23n}, \cite{26n}, \cite{27n}, \cite{14}, \cite{29n}, \cite{16}, \cite{41n}, \cite{42n}). In particular, in \cite{14} by A. V. Melnikov the generalized stochastic approximation algorithms with deterministic regression functions and martingale noises (do not depending on the phase variable) as the strong solutions of semimartingale SDEs were introduced.

Beginning from the paper \cite{1n} of A. Albert and L. Gardner a link between RM stochastic approximation algorithm and recursive parameter estimation procedures was intensively exploited. Later on recursive parameter estimation procedures for various special models (e.g., i.i.d models, non i.i.d. models in discrete time, diffusion models etc.) have been studied by a number of authors using methods of stochastic approximation (see, e.g., \cite{7n}, \cite{17n}, \cite{23n}, \cite{26n}, \cite{27n}, \cite{proc2}, \cite{20}, \cite{21}). It would be mentioned the fundamental book \cite{32n} by M. B. Nevelson and R.Z. Khas'minski (1972) between them.

In 1987 by N. Lazrieva and T. Toronjadze an heuristic algorithm of a construction of the recursive parameter estimation procedures for statistical models associated with semimartingales (including both discrete and continuous time semimartingale statistical models) was proposed  \cite{18n}. These procedures could not be covered by the generalized stochastic approximation algorithm proposed by Melnikov, while in i.i.d. case the classical RM algorithm contains recursive estimation procedures.

To recover the link between the stochastic approximation and recursive parameter estimation in \cite{proc10}, \cite{20n}, \cite{21n} by Lazrieva, Sharia and Toronjadze the semimartingale stochastic differential equation was introduced, which naturally includes both generalized RM stochastic approximation algorithms with martingale noises and recursive parameter estimation procedures for semimartingale statistical models.

Let on the stochastic basis $(\Om, \cF, F=(\cF_t)_{t\geq 0},P)$ satisfying the usual conditions the following objects  be given:

\begin{enumerate}
\item[a)] the random field $H=\{H_t(u)$, $t\geq 0$, $u\in R^1\}=\{H_t(\om,u)$, $t\geq 0$, $\om\in \Om$, $u\in R^1\}$ such that for each $u\in R^1$ the process $H(u)=(H_t(u))_{t\geq 0}\in \cP$ (i.e. is predictable);

\item[b)] the random field $M=\{M(t,u)$, $t\geq 0$, $u\in R^1\}=\{M(\om,t,u)$, $\om\in \Om$, $t\geq 0$, $u\in R^1\}$ such that for each $u\in R^1$ the process $M(u)=(M(t,u))_{t\geq 0}\in \cM_{\loc}^2(P)$;

\item[c)] the predictable increasing process $K=(K_t)_{t\geq 0}$ (i.e. $K\in \cV^+\cap \cP$).
\end{enumerate}

In the sequel we restrict ourselves to the consideration of the following particular cases:

\begin{enumerate}
\item[$1^\circ.$] $M(u)\equiv m\in \cM_{\loc}^2(P)$;

\item[$2^\circ.$] for each $u\in R^1$ $M(u)=f(u)\cdot m +g(u)\cdot n$, where $m\in \cM_{\loc}^c(P)$, $n\in \cM_{\loc}^{d,2}(P)$, the processes $f(u)=(f(t,u))_{t\geq 0}$ and $g(u)=(g(t,u))_{t\geq 0}$ are predictable, the corresponding stochastic integrals are well-defined and $M(u)\in \cM_{\loc}^2(P)$;

\item[$3^\circ.$] for each $u\in R^1$ $M(u)=\vf(u)\cdot m+W(u)*(\mu-\nu)$, where $m\in \cM_{\loc}^c(P)$, $\mu$ is an integer-valued random measure on $(R\times E, \cB(R_+)\times \cE)$, $\nu$ is its $P$-compensator, $(E,\cE)$ is the Blackwell space, $W(u)=(W(t,x,u)$, $t\geq 0$, $x\in E)\in \cP \otimes \cE$. Here we also mean that all stochastic integrals are well-defined.
\end{enumerate}

Later on by the symbol $\int\limits_0^t M(ds, u_s)$, where $u=(u_t)_{t\geq 0}$ is some predictable process, we denote the following stochastic line integrals:
$$
    \int_0^t f(s,u_s)\, dm_s+\int_0^t g(s, u_s)\, dn_s
        \quad \text{(in case $2^\circ$)}
$$
or
$$
    \int_0^t \vf(s,u_s)\, dm_s+\int_0^t \int_E W(s,x, u_s)(\mu-\nu)(ds,dx)
        \quad \text{(in case $3^\circ$)}
$$
provided the latters are well-defined.

Consider the following semimartingale stochastic differential equation
\begin{equation}\label{0.1}
    z_t=z_0+\int_0^t H_s(z_{s-}) \, dK_s +
        \int_0^t M(ds, z_{s-}), \quad z_0\in \cF_0.
\end{equation}

We call SDE (\ref{0.1}) the Robbins--Monro (PM) type SDE if the drift coefficient $H_t(u)$, $t\geq 0$, $u\in R^1$ satisfies the following conditions: for all $t\in [0,\infty)$ $\Pas$

\smallskip

(A) $\quad \begin{aligned}
            & H_t(0)=0, \\
            & H_t(u)u<0 \;\;\;\text{for all} \;\;\; u\neq 0.
        \end{aligned} $

\smallskip

The question of strong solvability of SDE (\ref{0.1}) is well-investigated (see, e.g., \cite{2}, \cite{3}, \cite{7}).

We assume that there exists an unique strong solution $z=(z_t)_{t\geq 0}$ of equation (\ref{0.1}) on the whole time interval $[0,\infty)$ and such that $\wt M\in \cM_{\loc}^2(P)$, where
$$
    \wt M_t =\int_0^t M(ds, z_{s-}).
$$
Some sufficient conditions for the latter can be found in  \cite{2}, \cite{3}, \cite{7}.

The unique solution $z=(z_t)_{t\geq 0}$ of RM type SDE (\ref{0.1}) can be viewed as a semimartingale stochastic approximation procedure.

In the present work we are concerning with the asymptotic behaviour of the process $(z_t)_{t\geq 0}$ and also of the averized procedure $\ol{z}=\ve^{-1}(z\circ \ve)$ (see Section 3 for the definition of $\ol{z}$) as $t\to \infty$.

The work is organized as follows:

In Section 1 we study the problem of convergence
\begin{equation}\label{0.2}
    z_t\to 0 \;\;\; \text{as} \;\;\; t\to \infty \;\;\Pas
\end{equation}

Our approach to this problem is based, at first, on the description of the non-negative semimartingale convergence sets given in subsection 1.1 \cite{proc10} (see also \cite{proc10} for other references) and, at the second, on two representations ``standard'' and ``nonstandard'' of the predictable process $A=(A_t)_{t\geq 0}$ in the canonical decomposition of the semimartingale $(z_t^2)_{t\geq 0}$, $z_t^2=A_t+{\rm mart}$, in the form of difference of two predictable increasing processes $A^1$ and $A^2$. According to these representations two groups of conditions (I) and (II) ensuring the convergence (\ref{0.2}) are introduced.

in subsection 1.2 the main theorem concerning (\ref{0.2}) is formulated. Also the relationship between groups (I) and (II) are investigated. In subsection 1.3 some simple conditions for (I) and (II) are given.

In subsection 1.4 the series of examples illustrating the efficience of all aspects of our approach are given. In particular, we introduced in Example 1 the recursive parameter estimation procedure for semimartingale statistical models and showed how can it be reduced to the SDE (\ref{0.1}). In Example 2 we show that the recursive parameter estimation procedure for discrete time general statistical models can also be embedded in stochastic approximation procedure given by (\ref{0.1}). This procedure was studied in \cite{20} in a full capacity.

In Example 3 we demonstrate that the generalized  stochastic approximation algorithm proposed in \cite{14} is covered by SDE (\ref{0.1}).

In Section 2 we establish the rate of convergence (see subsection 2.2) and also show that under very mild conditions the process $z=(z_t)_{t\geq 0}$ admits an asymptotic representation where the main term is a normed locally square integrable martingale. In the context of the parametric statistical estimation this implies the local asymptotic linearity of the corresponding recursive estimator. This result enables one to study the asymptotic behaviour of process $z=(z_t)_{t\geq 0}$ using a suitable form of the Central limit theorem for martingales (see Refs. \cite{11n}, \cite{12n}, \cite{14n}, \cite{proc12}, \cite{2-21}).

In subsection 2.1 we introduce some notations and present the normed process $\chi^2 z^2$ in form
\begin{equation}\label{0.3}
    \chi_t^2 z_t^2 =\frac{L_t}{\la L\ra_t^{1/2}} +R_t,
\end{equation}
where $L=(L_t)_{t\geq 0} \in \cM_{\loc}^2(P)$ and $\la L\ra_t$ is the shifted square characteristic of $L$, i.e. $\la L\ra_t:=1+\la L\ra_t^{F,P}$. See also subsection 2.1 for the definition of all objects presented in (\ref{0.3}).

In subsection 2.2 assuming $z_t\to 0$ as $t\to \infty$ $\Pas$, we give various sufficient conditions to ensure the convergence
\begin{equation}\label{0.4}
    \gm_t^\dl z_t^2 \to 0 \;\;\;\text{as} \;\;\; t\to \infty \;\; (\Pas)
\end{equation}
for all $\dl$, $0<\dl<\dl_0$, where $\gm=(\gm_t)_{t\geq 0}$ is a predictable increasing process and $\dl_0$, $0<\dl_0\leq 1$, is some constant. In this subsection we also give series if examples illustrating these results.

In subsection 2.3 assuming that Eq. (\ref{0.4}) holds with the process asymptotically equivalent to $\chi^2$, we study sufficient conditions to ensure the convergence
\begin{equation}\label{0.5}
    R_t\str{P}{\to} 0\;\;\;\text{as} \;\;\; t\to \infty
\end{equation}
which implies the local asymptotic linearity of recursive procedure $z=(z_t)_{t\geq 0}$. As an example illustrating the efficience of introduced conditions we consider RM stochastic approximation procedure with slowly varying gains (see \cite{2-19}).

An important approach to stochastic approximation problems has been proposed by Polyak in 1990 \cite{proc1} and Ruppert in 1988 \cite{proc2}. The main idea of this approach is the use of averaging iterates obtained from primary schemes. Since then the averaging procedures were studied by a number of authors for various schemes of stochastic approximation (\cite{1n}, \cite{proc7}, \cite{proc6}, \cite{proc8}, \cite{5n}, \cite{proc5}, \cite{7n}, \cite{2-19}, \cite{proc3}). The most important results of these studies is that the averaging procedures lead to the asymptotically optimal estimates, and in some cases, they converges to the limit faster than the initial algorithms.

In Section 3 the Polyak weighted averaging procedures of the initial process $z=(z_t)_{t\geq 0}$ are considered. They are defined as
\begin{equation}\label{0.6}
    \ol{z}_t =\cE_t^{-1}(g\circ K) \int_0^t z_s \, d\cE_s (g\circ K),
\end{equation}
where $g=(g_t)_{t\geq 0}$ is a predictable process, $g_t\geq 0$, $\int\limits_0^t g_s dK_s<\infty$, $\int\limits_0^\infty g_t d K_t=\infty$ and $\cE_t(X)$ as usual is the Dolean exponential.

Here the conditions are stated which guarantee the asymptotic normally of process $\ol{z}=(\ol{z}_t)_{t\geq 0}$ in case of continuous process under consideration.

The main result of this section is presented in Theorem 3.3.1, where assuming that Eq. (\ref{0.4}) holds true with some increasing process $\gm=(\gm_t)_{t\geq 0}$ asymptotically equivalent to the process $(\Gm_t^2\la L\ra_t^{-1})_{t\geq 0}$ the conditions are given that ensure the convergence
\begin{equation}\label{0.7}
    \cE_t^{1/2} \ol{z}_t \str{d}{\to} \sqrt{2}\, \xi, \quad \xi\in N(0,1),
\end{equation}
where $\cE_t=1+\int\limits_0^t \Gm_s^2 \la L\ra_s^{-1} \bt_s d K_s$.

As special cases we have obtained the results concerning averaging procedures for standard RM stochastic approximation algorithms and those with slowly varying gains.

All notations and fact concerning the martingale theory used in the presented work can be found in \cite{12n}, \cite{14n}, \cite{proc12}.


\numberwithin{equation}{subsection}
\setcounter{equation}{0}
\section{Convergence}

\subsection{The semimartingales convergence sets}\label{s2}

Let $(\Om,\cF,F=(\cF_t)_{t\geq 0},P)$ be a stochastic basis satisfying the usual conditions, and let $X=(X_t)_{t\geq 0}$ be an $F$-adapted process with trajectories in Skorokhod space $D$ (notation $X=F\cap D$). Let $X_\infty=\lim\limits_{t\to \infty} X_t$ and let $\XA$ denote the set, where $X_\infty$ exists and is a finite random variable (r.v.).

In this section we study the structure of the set $\XA$ for nonnegative special semimartingale $X$. Our approach is based on the multiplicative decomposition of the positive semimartingales.

Denote $\cV^+$ $(\cV)$ the set of processes $A=(A_t)_{t\geq 0}$, $A_0=0$, $A\in F\cap D$ with nondecreasing (bounded variation on each interval $[0,t[$) trajectories. We write $X\in \cP$ if $X$ is a predictable process. Denote $S_P$ the class of special semimartingales, i.e. $X\in S_p$ if $X\in F\cap D$ and
$$
    X=X_0+A+M,
$$
where $A\in \cV\cap \cP$, $M\in \cM_{\loc}$.

Let $X\in S_P$. Denote $\cE(X)$ the solution of the Dolean equation
$$
    Y=1+Y_-\cdot X,
$$
where $Y_-\cdot X_t:=\int\limits_0^t Y_{s-} dX_s$.

If $\Gm_1,\Gm_2\in \cF$, then $\Gm_1=\Gm_2$ ($P$-{\it a.s.}) or $\Gm_1\sbseq \Gm_2$ ($P$-{\it a.s.}) means $P(\Gm_1\Dl \Gm_2)=0$ or $P(\Gm_1\cap(\Om\setminus \Gm_2))=0$ respectively, where $\Dl$ is the sign of the symmetric difference of sets.

Let $X\in S_P$. Put $A=A^1-A^2$, where $A^1,A^2\in \cV^+\cap \cP$. Denote
$$
    \wh A=(1+X_-+A_-^2)^{-1} \circ A^2 \quad
        \Bigg( :=\int_0^{\cdot} (1+X_{s-}+A_{s-}^2)^{-1} dA_s^1 \Bigg).
$$

\begin{theorem}\label{t2.1}
Let $X\in S_P$, $X\geq 0$. Then
$$
    \{\wh A_\infty <\infty\} \sbseq \XA \cap \{A_\infty^2<\infty\}
        \quad (\Pas).
$$
\end{theorem}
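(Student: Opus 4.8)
The plan is to reduce the statement to the convergence theorem for nonnegative local martingales by means of a multiplicative transformation that absorbs the ``bad'' increasing part $A^1$. Set $V:=1+X+A^2$. Since $X\geq 0$ and $A^2\in\cV^+$, one has $V\geq 1>0$ and $V_-\geq 1$. First I would compute the canonical decomposition of $V$: because $X=X_0+A^1-A^2+M$, the two occurrences of $A^2$ cancel and
$$
    V=V_0+A^1+M,\qquad V_0=1+X_0,
$$
so that the predictable finite-variation part of $V$ is the increasing process $A^1$ and its martingale part is $M$. By definition $\wh A=V_-^{-1}\circ A^1$ is then a predictable increasing process, with $\Delta\wh A=V_-^{-1}\Delta A^1\geq 0$.

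The key step is to show that
$$
    Y:=\frac{V}{\cE(\wh A)}
$$
is a nonnegative local martingale. Since $\wh A\in\cV^+\cap\cP$ with $\Delta\wh A\geq 0$, the Dol\'ean exponential $\cE(\wh A)$ is a positive predictable increasing process bounded below by $1$, so $Y$ is well defined and nonnegative. I would compute $dY$ by integration by parts, using $d\cE(\wh A)=\cE(\wh A)_-\,d\wh A$ and $dV=dA^1+dM$. In the continuous case the computation is transparent: the drift equals $\cE(\wh A)_-^{-1}(dA^1-V_-\,d\wh A)=\cE(\wh A)_-^{-1}(dA^1-dA^1)=0$ by the very choice of $\wh A$, leaving $dY=\cE(\wh A)^{-1}\,dM$. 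In the general case the same cancellation must be checked at the jumps, where the product-rule covariation term $\Delta V\,\Delta(\cE(\wh A)^{-1})$ combines with the two Stieltjes contributions; a direct algebraic verification, using $\Delta A^1=V_-\Delta\wh A$ and $\Delta(\cE(\wh A)^{-1})=-\cE(\wh A)_-^{-1}\Delta\wh A/(1+\Delta\wh A)$, shows that the predictable finite-variation part of $Y$ is identically zero. This drift cancellation is exactly where the precise form of $\wh A$ is used, and I expect it to be the main technical obstacle; the piece of the covariation involving $\Delta M$ is handled by noting it is carried by the martingale part.

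Granting that $Y$ is a nonnegative local martingale, it is a nonnegative supermartingale, hence $Y_\infty=\lim_t Y_t$ exists and is finite $\Pas$ on all of $\Om$. It remains to transfer this to $V$ on the set $\{\wh A_\infty<\infty\}$. On that set, since $\wh A$ is increasing, $\cE(\wh A)_t$ increases to a limit $\cE(\wh A)_\infty$ satisfying $1\leq\cE(\wh A)_\infty\leq\exp(\wh A_\infty)<\infty$, so $\cE(\wh A)$ converges to a finite strictly positive limit. Writing $V=Y\,\cE(\wh A)$ then gives that $V_t$ converges to the finite limit $Y_\infty\,\cE(\wh A)_\infty$.

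Finally I would read off the two conclusions from $V=1+X+A^2$. On $\{\wh A_\infty<\infty\}$ the limit $V_\infty$ is finite; since $X\geq 0$, the increasing process $A^2$ satisfies $A^2_t\leq V_t$, whence $A^2_\infty\leq V_\infty<\infty$, giving the inclusion in $\{A^2_\infty<\infty\}$. Consequently $X_t=V_t-1-A^2_t$ converges to the finite limit $V_\infty-1-A^2_\infty$, i.e. $X_\infty$ exists and is finite, placing $\om$ in $\XA$. This yields $\{\wh A_\infty<\infty\}\sbseq\XA\cap\{A^2_\infty<\infty\}$ $(\Pas)$, as required.
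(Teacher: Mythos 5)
Your argument is correct and follows essentially the same route as the paper: both pass to $Y=1+X+A^{2}=Y_{0}+A^{1}+M$ and use its multiplicative decomposition $Y=Y_{0}\,\cE(\wh A)\,\cE(\wh M)$, identifying your ratio $V/\cE(\wh A)$ with the convergent positive local martingale factor $Y_{0}\cE(\wh M)$ (the paper quotes the decomposition theorem from Liptser--Shiryaev and checks $\Dl\wh M>-1$, where you verify the drift cancellation by integration by parts). The concluding step --- finiteness of $\cE(\wh A)_{\infty}$ on $\{\wh A_{\infty}<\infty\}$ and reading off $\XA\cap\{A^{2}_{\infty}<\infty\}$ from $A^{2}<Y$ --- matches the paper exactly.
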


\begin{proof}
Consider the process $Y=1+X+A^2$. Then
$$
    Y=Y_0+A^1+M, \quad Y_0=1+X_0,
$$
$Y\geq 1$, $Y_-^{-1}\Dl A^1\!\geq \!0$. Thus the processes $\wh A=Y_-^{-1} \circ A^1$ and $\wh M=(Y_-+\Dl A^1)^{-1}\cdot M$ are well-defined and besides $\wh A\in \cV^+\cap \cP$, $\wh M\in\cM_{\loc}$. Then, using Theorem~1, \S 5, Ch. 2 from \cite{proc12} we get the following multiplicative decomposition
$$
    Y=Y_0\cE(\wh A) \cE(\wh M),
$$
where $\cE(\wh A)\in \cV^+\cap \cP$, $\cE(\wh M)\in \cM_{\loc}$.

Note that $\Dl \wh M>-1$. Indeed, $\Dl\wh M=(Y_-+\Dl A^1)^{-1} \Dl M$. But $\Dl M=\Dl Y-\Dl A^1=Y-(Y_-+\Dl A^1)>-(Y_-+\Dl A^1)$. Therefore $\cE(\wh M)>0$ and $\{\cE(\wh M)\to\}=\Om$ $(\Pas)$. On the other hand (see, e.g., \cite{16}, Lemma 2.5)
$$
    \cE_t(\wh A)\ua \infty \Longleftrightarrow \wh A_t \ua \infty \quad \text{as}
        \quad t\to \infty.
$$
Hence
$$
    \{\wh A_\infty <\infty\} \sbseq \{Y\to\} =\XA \cap
        \{A_\infty^2<\infty\},
$$
since $A^2<Y$ and $A^2\in \cV^+$.

Theorem is proved.
\end{proof}

\begin{corollary}\label{c2.1}
$$
    \{A_\infty^1<\infty\}=\{(1+X_-)^{-1} \circ A_\infty^1<\infty\} =
        \{\wh A_\infty<\infty\} \;\; (\Pas).
$$
\end{corollary}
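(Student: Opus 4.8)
The plan is to prove the three-set equality by a cyclic chain of inclusions, two of which are trivial pointwise bounds and the third of which is where Theorem~\ref{t2.1} does the real work. Write $C=(1+X_-)^{-1}\circ A^1$ for brevity. First I would record the elementary comparison of the integrands. Since $X\geq 0$ and $A^2\in\cV^+$, we have $X_{s-}\geq 0$ and $A^2_{s-}\geq 0$, so
$$
    (1+X_{s-}+A^2_{s-})^{-1}\leq (1+X_{s-})^{-1}\leq 1 .
$$
Integrating against $dA^1_s\geq 0$ gives, for every $\om$ and every $t$ (in particular $t=\infty$),
$$
    \wh A_\infty\leq C_\infty\leq A^1_\infty .
$$
Reading these two inequalities as implications about finiteness yields, everywhere (not merely $\Pas$),
$$
    \{A^1_\infty<\infty\}\sbseq\{C_\infty<\infty\}\sbseq\{\wh A_\infty<\infty\}.
$$

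It then remains to close the loop by showing $\{\wh A_\infty<\infty\}\sbseq\{A^1_\infty<\infty\}$ ($\Pas$). Here I would invoke Theorem~\ref{t2.1}, which gives $\{\wh A_\infty<\infty\}\sbseq \XA\cap\{A^2_\infty<\infty\}$. On this set $X$ converges to a finite limit; since $X\in F\cap D$ is c\`adl\`ag and a convergent c\`adl\`ag trajectory is bounded, we have $X^*:=\sup_t X_t<\infty$ there, and moreover $A^2_\infty<\infty$. Because left limits of a bounded (resp.\ nondecreasing) process inherit the bound, $X_{s-}\leq X^*$ and $A^2_{s-}\leq A^2_\infty$ for all $s$. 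Consequently the weight in $\wh A$ is bounded below by a strictly positive random constant,
$$
    (1+X_{s-}+A^2_{s-})^{-1}\geq (1+X^*+A^2_\infty)^{-1}=:\rho>0,
$$
so that $\wh A_\infty\geq \rho\,A^1_\infty$, i.e. $A^1_\infty\leq (1+X^*+A^2_\infty)\,\wh A_\infty$. On $\{\wh A_\infty<\infty\}$ the right-hand side is finite, whence $A^1_\infty<\infty$ and the desired inclusion follows.

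Combining the three inclusions forces $\{A^1_\infty<\infty\}=\{C_\infty<\infty\}=\{\wh A_\infty<\infty\}$ ($\Pas$). The only genuinely nontrivial step is the reverse inclusion: the forward inclusions are immediate from $(1+X_{s-}+A^2_{s-})^{-1}\leq(1+X_{s-})^{-1}\leq 1$, but reversing them requires the weights to be bounded away from zero, which can fail without control on the growth of $X$ and $A^2$. The essential input is therefore Theorem~\ref{t2.1}, which on $\{\wh A_\infty<\infty\}$ simultaneously delivers the convergence (hence boundedness) of $X$ and the finiteness of $A^2_\infty$, thereby pinning the weights below by $\rho>0$. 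I expect no further subtlety beyond this.
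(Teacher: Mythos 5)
Your proof is correct and follows essentially the same route as the paper's: the trivial forward inclusions from the pointwise bound $(1+X_{s-}+A^2_{s-})^{-1}\leq(1+X_{s-})^{-1}\leq 1$, then Theorem \ref{t2.1} to land in $\XA\cap\{A_\infty^2<\infty\}$, where the weights are bounded below by a positive random constant so that the finiteness of $\wh A_\infty$ and $A_\infty^1$ coincide. You merely spell out in detail the step the paper dispatches with ``it remains to note that''; no gap.
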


\begin{proof}
It is evident that
\begin{align*}
    \{A_\infty^1<\infty\} & \sbseq \{(1+X_-)^{-1} \circ A_\infty^1<\infty\}
        \sbseq \{\wh A_\infty<\infty\} \\
    & \sbseq \XA\cap \{A_\infty^2<\infty\}\;\;(\Pas).
\end{align*}
It remains to note that
\begin{align*}
    \{A_\infty^1 & <\infty\}\cap \XA \cap \{A_\infty^2 <\infty\} \\
    & =\{\wh A_\infty <\infty\} \cap \XA \cap \{A_\infty^2 <\infty\}\;\;(\Pas).
\end{align*}
Corollary is proved.
\end{proof}

\begin{corollary}\label{c2.2}
$$
    \{\wh A_\infty<\infty\}\cap \{\cE_\infty(\wh M)>0\}=
        \XA\cap \{A_\infty^2<\infty\} \cap \{\cE_\infty(\wh M)>0\}\;\; (\Pas),
$$
as it easily follows from the proof of Theorem $\ref{t2.1}$.
\end{corollary}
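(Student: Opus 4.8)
The plan is to prove the two inclusions separately, reusing the apparatus already assembled in the proof of Theorem~\ref{t2.1}. The inclusion $\sbseq$ is immediate: Theorem~\ref{t2.1} gives $\{\wh A_\infty<\infty\}\sbseq \XA\cap\{A_\infty^2<\infty\}$ ($\Pas$), so intersecting both sides with the common event $\{\cE_\infty(\wh M)>0\}$ yields exactly the containment in the $\sbseq$ direction. Only the reverse inclusion $\supseteq$ carries any content.

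For $\supseteq$ I would work on the set $\XA\cap\{A_\infty^2<\infty\}\cap\{\cE_\infty(\wh M)>0\}$ and invoke the multiplicative decomposition $Y=Y_0\cE(\wh A)\cE(\wh M)$ established in the proof of Theorem~\ref{t2.1}, where $Y=1+X+A^2$ and $Y_0=1+X_0\geq 1$ since $X\geq 0$. First I would note that on $\XA\cap\{A_\infty^2<\infty\}$ the limit $Y_\infty=1+X_\infty+A_\infty^2$ exists, is finite, and satisfies $Y_\infty\geq 1$ because $Y\geq 1$. Next, since $\cE(\wh M)$ is a strictly positive local martingale (the bound $\Dl\wh M>-1$ was verified in the proof of Theorem~\ref{t2.1}), it is a nonnegative supermartingale and hence converges ($\Pas$) to a finite limit $\cE_\infty(\wh M)\geq 0$; on the set under consideration this limit is strictly positive.

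Solving the multiplicative decomposition for the increasing factor gives $\cE(\wh A)=Y\,\bigl(Y_0\,\cE(\wh M)\bigr)^{-1}$, so on $\XA\cap\{A_\infty^2<\infty\}\cap\{\cE_\infty(\wh M)>0\}$ the limit $\cE_\infty(\wh A)=Y_\infty\,\bigl(Y_0\,\cE_\infty(\wh M)\bigr)^{-1}$ exists and is finite, being a ratio of finite quantities with a nonvanishing denominator. Finally I would apply the equivalence $\cE_t(\wh A)\ua\infty\LLra\wh A_t\ua\infty$ (already used in the proof of Theorem~\ref{t2.1}) in its contrapositive form: finiteness of $\cE_\infty(\wh A)$ forces $\wh A_\infty<\infty$. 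Combined with $\{\cE_\infty(\wh M)>0\}$ this gives the inclusion $\supseteq$ and hence the claimed equality.

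The only point requiring care---the ``main obstacle,'' such as it is---is ensuring that the denominator $Y_0\,\cE(\wh M)$ stays bounded away from $0$ in the limit, so that passing to $t\to\infty$ in the ratio is legitimate; this is precisely what the two facts $Y_0\geq 1$ and $\{\cE_\infty(\wh M)>0\}$ supply. Everything else is a direct reading-off of quantities already produced in the proof of Theorem~\ref{t2.1}, which is why the corollary is advertised as following easily from it.
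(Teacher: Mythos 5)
Your proposal is correct and is essentially the argument the paper intends: the corollary is stated as following from the proof of Theorem \ref{t2.1}, and your reverse inclusion simply reads off $\cE(\wh A)=Y\bigl(Y_0\,\cE(\wh M)\bigr)^{-1}$ from the multiplicative decomposition, uses the a.s.\ convergence of the positive local martingale $\cE(\wh M)$ together with $\{\cE_\infty(\wh M)>0\}$, and concludes via the equivalence $\cE_t(\wh A)\ua\infty\LLra\wh A_t\ua\infty$, exactly the ingredients already assembled there. No gaps.
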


\begin{remark}\label{r2.1}
The relation
$$
    \{A_\infty^1<\infty\}\sbseq \XA \cap \{A_\infty^2<\infty\}\;\;(\Pas)
$$
has been proved in \cite{proc12}, Ch. 2, \S 6, Th. 7. Under the following additional assumptions:

1. $EX_0<\infty$;

2. one of the following conditions $(\al)$ or $(\bt)$ are satisfied:
\begin{enumerate}
\item[$(\al)$] there exists $\ve>0$ such that $A_{t+\ve}^1\in \cF_t$ for all $t>0$,
\item[$(\bt)$] for any predictable Markov moment $\sg$
$$
    E\Dl A_\sg^1 I_{\{\sg<\infty\}}<\infty.
$$
\end{enumerate}
\end{remark}


Let $A,B\in F\cap D$. We write $A\prec B$ if $B-A\in \cV^+$.

\begin{corollary}\label{c2.3}
Let $X\in S_P$, $X\geq 0$, $A\leq A^1-A^2$ and $A\prec A^1$, where $A^1,A^2\in \cV^+\cap \cP$. Then
$$
    \{A_\infty^1<\infty\}=\{(1+X_-)^{-1} \circ A_\infty^1<\infty\}
        \sbseq \XA \cap \{A_\infty^2<\infty\} \;\;(\Pas).
$$
\end{corollary}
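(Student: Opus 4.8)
The plan is to reduce Corollary~\ref{c2.3} to Corollary~\ref{c2.1} by producing, out of the two hypotheses, an \emph{exact} ``standard'' representation of the canonical drift of $X$ with a suitably enlarged second component. Recall that here $A$ is the predictable drift in the canonical decomposition $X=X_0+A+M$, and that Theorem~\ref{t2.1} and Corollary~\ref{c2.1} were proved under the assumption that $A=A^1-A^2$ \emph{exactly}. The present hypotheses only furnish $A\leq A^1-A^2$ together with $A\prec A^1$, so the first task is to manufacture an honest decomposition $A=A^1-(\,\cdot\,)$ with admissible second component. The natural choice is $B:=A^1-A$.

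First I would check that $B$ qualifies. The assumption $A\prec A^1$ means precisely $A^1-A\in\cV^+$, i.e. $B\in\cV^+$; since $A^1$ and $A$ are both predictable, $B\in\cP$, and $B_0=A^1_0-A_0=0$. Hence $B\in\cV^+\cap\cP$ and $A=A^1-B$ is an exact representation of the drift of $X$ of the type to which Theorem~\ref{t2.1} applies, now with $B$ playing the role of $A^2$. Correspondingly the auxiliary process $\wh A=(1+X_-+B_-)^{-1}\circ A^1$ is formed from $A^1$, $X$ and $B$ exactly as in the statement preceding Theorem~\ref{t2.1}.

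Next I would invoke Corollary~\ref{c2.1} verbatim for this representation. It yields
$$
    \{A^1_\infty<\infty\}=\{(1+X_-)^{-1}\circ A^1_\infty<\infty\}=\{\wh A_\infty<\infty\}\sbseq\XA\cap\{B_\infty<\infty\}\quad(\Pas).
$$
The first equality here involves only $A^1$ and $X$, so it is already the one asserted in the corollary; it remains only to replace $\{B_\infty<\infty\}$ by $\{A^2_\infty<\infty\}$. For this I would rearrange the hypothesis $A\leq A^1-A^2$ into the pointwise inequality $A^2\leq A^1-A=B$; since $A^2$ and $B$ are both nondecreasing and nonnegative, on $\{B_\infty<\infty\}$ one has $A^2_t\leq B_t\leq B_\infty<\infty$ for all $t$, whence $A^2_\infty\leq B_\infty<\infty$. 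Thus $\{B_\infty<\infty\}\sbseq\{A^2_\infty<\infty\}$, and intersecting with $\XA$ gives the claim.

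The only delicate point is the passage from the exact decomposition used in Corollary~\ref{c2.1} to the inequality-type hypotheses here, and this is exactly where the two assumptions play their distinct roles: $A\prec A^1$ is what guarantees that $B=A^1-A$ is genuinely increasing (hence admissible as a second component), while $A\leq A^1-A^2$ is what controls $A^2$ from above by $B$ and so transfers the finiteness set. Once the substitution $A^2\mapsto B$ is seen, everything else is a routine monotonicity comparison and an appeal to Corollary~\ref{c2.1}.
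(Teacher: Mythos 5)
Your proof is correct and follows essentially the same route as the paper: the paper likewise sets $\wt A^2:=A^1-A$ (your $B$), notes it lies in $\cV^+\cap\cP$ by $A\prec A^1$, applies Theorem \ref{t2.1} and Corollary \ref{c2.1} to the exact decomposition $X=X_0+A^1-\wt A^2+M$, and concludes via the inclusion $\{\wt A^2_\infty<\infty\}\sbseq\{A^2_\infty<\infty\}$, which is exactly your monotonicity comparison $A^2\leq B$. Your write-up merely makes explicit the details the paper calls ``trivial.''
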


\begin{proof}
Rewrite $X$ in the form
$$
    X=X_0+A^1-\wt A{\,}^2+M,
$$
where $\wt A{\,}^2=A^1-A\in \cV^1\cap \cP$. Then the desirable follows from Theorem \ref{t2.1}, Corollary \ref{c2.1} and trivial inclusion $\{\wt A_\infty^2<\infty\} \sbseq \{A_\infty^2<\infty\}$.

The corollary is proved.
\end{proof}

\begin{corollary}\label{c2.4}
Let $X\in S_P$, $X\geq 0$ and
$$
    X=X_0+X_-\circ B+A+M
$$
with $B\in \cV^+\cap \cP$, $A\in \cV\cap \cP$ and $M\in \cM_{\loc}$.

Suppose that for $A^1,A^2\in \cV^+\cap \cP$
$$
    A\leq A^1-A^2 \quad \text{and} \quad A\prec A^1.
$$
Then
$$
    \{A_\infty^1<\infty\} \cap \{B_\infty<\infty\} \sbseq
        \XA \cap \{A_\infty^2<\infty\} \;\;(\Pas).
$$
\end{corollary}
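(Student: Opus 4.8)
The plan is to reduce Corollary \ref{c2.4} to the already-established Corollary \ref{c2.3} by absorbing the extra multiplicative drift term $X_-\circ B$ into a modified decomposition of the bounded-variation part. The key observation is that the new feature here, relative to Corollary \ref{c2.3}, is precisely the presence of the term $X_-\circ B$ in the canonical decomposition of $X$, and that this term is itself a stochastic integral of the nonnegative process $X_-$ against an increasing predictable process $B$. Since $X\geq 0$, this term is nonnegative and increasing, so it can be handled by a multiplicative-exponential change of the type already exploited in the proof of Theorem \ref{t2.1}.

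First I would form the process $X_-\circ B$ and recognize that on the set $\{B_\infty<\infty\}$ this drift contribution is, in an appropriate pathwise sense, controlled. The cleanest route is to introduce the Dol\'eans exponential $\cE(B)$ and pass to the transformed process $\wt X = \cE(B)^{-1} X$ (equivalently, to ``divide out'' the multiplicative drift). A direct application of the integration-by-parts formula then shows that $\wt X$ is again a nonnegative special semimartingale whose drift no longer contains the $X_-\circ B$ term: its bounded-variation part becomes $\cE(B)_-^{-1}\circ A$ plus correction terms coming from the jumps of $B$, and its martingale part is $\cE(B)_-^{-1}\cdot M$ up to the analogous corrections. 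On $\{B_\infty<\infty\}$ the process $\cE(B)$ is bounded away from $0$ and $\infty$, so the transformed increasing processes $\cE(B)_-^{-1}\circ A^1$ and $\cE(B)_-^{-1}\circ A^2$ converge on exactly the same sets as $A^1$ and $A^2$, and the domination relations $A\leq A^1-A^2$ and $A\prec A^1$ are preserved (again up to the boundedness of $\cE(B)$ on the relevant set).

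Second, having arranged that $\wt X$ satisfies the hypotheses of Corollary \ref{c2.3} on the set $\{B_\infty<\infty\}$, I would apply that corollary to $\wt X$ to conclude $\{\wt A{}^1_\infty<\infty\}\sbseq \{\wt X\to\}\cap\{\wt A{}^2_\infty<\infty\}$, and then translate the conclusion back: since $\cE(B)$ is bounded on $\{B_\infty<\infty\}$, convergence of $\wt X$ and convergence of $X$ agree on that set, and the finiteness sets of the transformed and original increasing processes coincide there. Intersecting with $\{A^1_\infty<\infty\}\cap\{B_\infty<\infty\}$ then yields the desired inclusion into $\XA\cap\{A^2_\infty<\infty\}$.

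The main obstacle I expect is the bookkeeping in the integration-by-parts step: one must verify carefully that the transformed bounded-variation part admits a decomposition into two \emph{predictable increasing} processes whose convergence sets coincide with those of $A^1,A^2$ on $\{B_\infty<\infty\}$, and that the jump-correction terms do not destroy either $A\leq A^1-A^2$ or $A\prec A^1$. A cleaner alternative that sidesteps most of this is to avoid the explicit transformation altogether and instead mimic the proof of Theorem \ref{t2.1} directly: set $Y=1+X+A^2$, compute its canonical decomposition taking the $X_-\circ B$ term into account, and check that the resulting additive increasing drift is dominated by $\wh A + (\text{something}) \circ B$, so that $\{\wh A_\infty<\infty\}\cap\{B_\infty<\infty\}$ still forces $\cE$ of the drift to stay finite. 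Given the structure of the preceding results, I would favour the reduction to Corollary \ref{c2.3}, keeping the transformation argument as the conceptual backbone and relegating the jump estimates to the routine verification that $\cE(B)$ stays bounded on $\{B_\infty<\infty\}$.
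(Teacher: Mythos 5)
Your proposal coincides with the paper's proof: the paper's entire argument for this corollary is the one\nobreakdash-line instruction to consider the process $X\cE^{-1}(B)$ and repeat the proof of Corollary \ref{c2.3}, which is exactly your transformation $\wt X=\cE^{-1}(B)X$ followed by an application of the earlier corollary and the observation that $\cE(B)$ stays between $1$ and a finite bound on $\{B_\infty<\infty\}$. The details you flag as bookkeeping do work out — integration by parts gives the drift of $\wt X$ as exactly $\cE^{-1}(B)\circ A$ with no residual correction terms, and both relations $A\leq A^1-A^2$ and $A\prec A^1$ are preserved because $\cE^{-1}(B)$ is positive and nonincreasing.
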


The proof is quite similar to the prof of Corollary \ref{c2.3} if we consider the process $X\cE^{-1}(B)$.

\begin{remark}\label{r2.3}
Consider the discrete time case.

Let $\cF_0,\cF_1,\dots$ be a non-decreasing sequence of $\sg$-algebras and $X_n$, $\bt_n$, $\xi_n$, $\zt_n\in \cF_n$, $n\geq 0$, are nonnegative r.v. and
$$
    X_n=X_0+\sum_{i=0}^n X_{i-1} \bt_{i-1}+A_n+M_n
$$
(we mean that $X_{-1}=X_0$, $\cF_{-1}=\cF_0$ and $\bt_{-1}=\xi_{-1}=\zt_{-1}=0$), where $A_n\in \cF_{n-1}$ with $A_0=0$ and $M\in \cM_{\loc}$. Note that $X_n$ can always be represented in this form taking $A_n=\sum\limits_{i=0}^n (E(X_i| \cF_{i-1})-X_{i-1}) -\sum\limits_{i=0}^n X_{i-1}\bt_{i-1}$.

Denote
$$
    A_n^1=\sum_{i=0}^n \xi_{i-1} \quad \text{and} \quad
    A_n^2=\sum_{i=0}^n \zt_{i-1}.
$$

It is clear that in this case
$$
    A\prec A^1 \Longleftrightarrow \Dl A_n\leq \xi_{n-1}
$$
$(\Dl A_n:=A_n-A_{n-1}$, $n\geq 1)$.

So, in this case Corollary \ref{c2.4} can be formulated in the following way:

{\it Let for each $n$
$$
    A_n \leq \sum_{i=0}^n (\xi_{i-1}-\zt_{i-1})
$$
and
$$
    \Dl A_n \leq \xi_{n-1}.
$$

Then }
$$
    \bigg\{ \sum_{i=0}^\infty \xi_{i-1}<\infty \bigg\} \cap
    \bigg\{ \sum_{i=0}^\infty \bt_{i-1}<\infty \bigg\} \sbseq \XA \cap
    \bigg\{ \sum_{i=0}^\infty \zt_{i-1}<\infty \bigg\}\;(\Pas).
$$
\end{remark}

From this corollary follows the result by Robbins and Siegmund (see Robbins, Siegmund \cite{37n}). Really, the above inclusion holds if in particular $\Dl A_n \leq \xi_{n-1} -\zt_{n-1}$, $n\geq 1$, i.e. when
$$
    E(X_n \mid \cF_{n-1}) \leq X_{n-1} (1+\bt_{n-1}) +\xi_{n-1}-
        \zt_{n-1}, \quad n\geq 0.
$$
In our terms the previous inequality means $A\prec A^1-A^2$.

\subsection{Main theorem}\label{s3}

Consider the stochastic equation (RM procedure)
\begin{equation}\label{3.1}
    z_t=z_0+\int_0^t H_s(z_{s-})\,dK_s +\int_0^t M(ds, z_{s-}),
        \quad t\geq 0, \quad z_0\in \cF_0,
\end{equation}
or in the differential form
$$
    dz_t=H_t(z_{t-}) dK_t+M(dt,z_{t-}), \quad z_0\in \cF_0.
$$
Assume that there exists an unique strong solution $z=(z_t)_{t\geq 0}$ of (\ref{3.1}) on the whole time interval $[0,\infty)$, $\wt M\in \cM_{\loc}^2$, where
$$
    \wt M_t:=\int_0^t M(ds,z_{s-}).
$$

We study the problem of $P$-$a.s.$ convergence $z_t\to 0$, as $t\to \infty$.

For this purpose apply Theorem \ref{t2.1} to the semimartingale $X_t=z_t^2$, $t\geq 0$. Using the Ito formula we get for the process $(z_t^2)_{t\geq 0}$
\begin{equation}\label{3.2}
    dz_t^2=dA_t+dN_t,
\end{equation}
where
\begin{align*}
    & dA_t =V_t^-(z_{t-}) dK_t+V_t^+(z_{t-}) dK_t^d+d\la\wt M\ra_t, \\
    & dN_t=2z_{t-} d\wt M_t+H_t(z_{t-}) \Dl K_t \,d\wt M_t^d+
        d([\wt M]_t-\la \wt M\ra_t),
\end{align*}
with
\begin{align*}
    & V_t^-(u):= 2H_t(u)u, \\
    & V_t^+(u):= H_t^2(u)\Dl K_t.
\end{align*}
Note that $A=(A_t)_{t\geq 0} \in \cV\cap \cP$, $N\in \cM_{\loc}$.

Represent the process $A$ in the form
\begin{equation}\label{3.3}
    A_t=A_t^1-A_t^2
\end{equation}
with \\
$(1) \quad \begin{cases}
                \quad dA_t^1=V_t^+(z_{t-}) dK_t^d+d\la \wt M\ra_t, \\
                \;-dA_t^2=V_t^-(z_{t-}) dK_t,
            \end{cases} $ \\
or \\
$(2) \quad \begin{cases}
                \quad dA_t^1=[V_t^-(z_{t-})I_{\{\Dl K_t\neq 0\}}+
                    V_t^+(z_{t-})]^+ dK_t^d+d\la \wt M\ra_t, \\
                \;-dA_t^2=\{V_t^-(z_{t-})I_{\{\Dl K_t= 0\}}-
                    [V_t^-(z_{t-}) I_{\{\Dl K_t\neq 0\}}
                +V_t^+(z_{t-})]^-\} dK_t,
            \end{cases} $ \\
where $[a]^+=\max(0,a)$, $[a]^-=-\min(0,a)$.

As it follows from condition (A) $\al_t(z_{t-})\leq 0$ for all $t\geq 0$ and so, the representation (\ref{3.3})(1) directly corresponds to the usual (in stochastic approximation procedures) standard form of process $A$ (in (\ref{3.2}) $A=A^1-A^2$ with $A^1,A^2$ from (\ref{3.3})(1)). Therefore we call representation (\ref{3.3})(1) ``standard'', while the representation (\ref{3.3})(2) is called ``nonstandard''.

Introduce the following group of conditions: For all $u\in R^1$ and $t\in [0,\infty)$ \\[2mm]
(A)
For all $t\in [0,\infty)$ $\Pas$
\begin{enumerate}
\item[] $H_t(0)=0$,
\item[] $H_t(0)u<0$ for all $u\neq 0$;
\end{enumerate}

\smallskip

\noindent (B)
\begin{enumerate}
\item[(i)] $\la M(u)\ra \ll K$,
\smallskip

\item[(ii)] $h_t(u)\leq B_t(1+u^2)$, $B_t\geq 0$, $B=(B_t)_{t\geq 0} \in \cP$, $B\circ K_\infty<\infty$, \\[1mm] where $h_t(u)=\frac{d\la M(u)\ra_t}{dK_t}$;
\end{enumerate}
\smallskip

\noindent (I)
\begin{enumerate}
\item[(i)] $(i_1) \;\; I_{\{\Dl K_t\neq 0\}} |H_t(u)| \leq C_t(1+|u|)$, $C_t\!\geq \!0$, $C=(C_t)_{t\geq 0}\!\in \!\cP$,
 $C\circ K_t\!<\!\infty$,  \\[1mm]
$(i_2) \;\; C^2\Dl K \circ K_\infty^d<\infty$,
\smallskip

\item[(ii)] for each $\ve>0$
$$
    \inf_{\ve\leq |u|\leq 1/\ve}|V^-(u)|\circ K_\infty=\infty;
$$
\end{enumerate}
\smallskip

\noindent (II)
\begin{enumerate}
\item[(i)] $[V_t^-(u) I_{\{\Dl K_t\neq 0\}}+V_t^+(u)]^+\leq D_t(1+u^2)$, $D_t\geq 0$, \\
$D=(D_t)_{t\geq 0}\in \cP$, $D\circ K_\infty^d<\infty,$
\smallskip

\item[(ii)] for each $\ve>0$
$$
    \inf_{\ve\leq |u|\leq 1/\ve} \{|V^-(u)| I_{\{\Dl K_t= 0\}}+
        [V^-(u) I_{\{\Dl K_t\neq 0\}}+V^+(u)]^-\} \circ K_\infty=\infty.
$$
\end{enumerate}

\begin{remark}\label{r3.1}
When $M(u)\equiv m\in \cM_{\loc}^2$, we do not require the condition $\la m\ra\ll K$ and replace the condition (B) by \\[2mm]
(B$'$) \hskip+2cm $\la m\ra_\infty <\infty.$
\end{remark}
\smallskip

\begin{remark}\label{r3.2}
Everywhere we assume that all conditions are satisfied $P$-$a.s.$
\end{remark}

\begin{remark}\label{r3.3}
It is evident that (I)~(ii)$\Longrightarrow C\circ K_\infty=\infty$.
\end{remark}

\begin{theorem}\label{t3.1}
Let conditions {\rm (A), (B), (I)} or {\rm (A), (B), (II)} be satisfied. Then
$$
    z_t\to 0 \;\; \Pas \quad \text{as} \;\;\; t\to \infty.
$$
\end{theorem}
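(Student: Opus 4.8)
The plan is to apply the nonnegative--semimartingale convergence results of Subsection~\ref{s2} to $X_t=z_t^2$. By the It\^o formula the canonical decomposition of $X$ is $dz_t^2=dA_t+dN_t$ as in (\ref{3.2}), with $N\in\cM_{\loc}$ and $A\in\cV\cap\cP$ given by $dA_t=V_t^-(z_{t-})\,dK_t+V_t^+(z_{t-})\,dK_t^d+d\la\wt M\ra_t$. The sign condition {\rm (A)} makes $V_t^-(z_{t-})=2H_t(z_{t-})z_{t-}\leq0$, so the $V^-$--term is a genuine \emph{decreasing} drift supplying the ``restoring force''. I would treat the two hypotheses sets in parallel: under {\rm (A),(B),(I)} I use the standard representation (\ref{3.3})(1) of $A$, and under {\rm (A),(B),(II)} the nonstandard one (\ref{3.3})(2); the arguments are formally identical once the roles of the pieces are matched.

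The first substantive step is to route the a priori uncontrolled factor $z_{t-}^2$ through a multiplicative term so that Corollary~\ref{c2.4} applies. The increasing part $A^1$ of (\ref{3.3})(1) consists of $d\la\wt M\ra_t=h_t(z_{t-})\,dK_t$ together with the jump term $V_t^+(z_{t-})\,dK_t^d$, and by {\rm (B)(ii)} and {\rm (I)(i$_1$)} both are dominated by predictable coefficients times $(1+z_{t-}^2)$; here I use $V_t^+(u)=H_t^2(u)\Dl K_t\leq2C_t^2(1+u^2)\Dl K_t$ on $\{\Dl K_t\neq0\}$. I therefore set $d\bar B_t:=B_t\,dK_t+2C_t^2\Dl K_t\,dK_t^d$, write $X=X_0+X_-\circ\bar B+\cA+N$ as in Corollary~\ref{c2.4}, and note that {\rm (B)(ii)} and {\rm (I)(i$_2$)} give $\bar B_\infty<\infty$ $\Pas$. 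Putting $\cA^1:=\bar B$ and $d\cA^2_t:=|V_t^-(z_{t-})|\,dK_t$, the two--sided nature of the quadratic bounds yields both $\cA\leq\cA^1-\cA^2$ and the crucial relation $\cA\prec\cA^1$: indeed $d\cA^1_t-d\cA_t$ equals $|V_t^-(z_{t-})|\,dK_t$ plus the nonnegative slacks $[2C_t^2(1+z_{t-}^2)\Dl K_t-V_t^+(z_{t-})]\,dK_t^d$ and $[B_t(1+z_{t-}^2)-h_t(z_{t-})]\,dK_t$. Hence Corollary~\ref{c2.4} gives $\{\cA^1_\infty<\infty\}\cap\{\bar B_\infty<\infty\}\sbseq\XA\cap\{\cA^2_\infty<\infty\}$, and since the left side is all of $\Om$ $(\Pas)$ we conclude that $z_t^2$ converges to a finite limit and $\int_0^\infty|V_t^-(z_{t-})|\,dK_t<\infty$ $\Pas$.

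It remains to upgrade the finite limit to $0$. On $\{X_\infty>0\}$ the path $|z_{t-}|$ is eventually trapped in an annulus $\ve\leq|z_{t-}|\leq1/\ve$, so $|V_t^-(z_{t-})|\geq\inf_{\ve\leq|u|\leq1/\ve}|V_t^-(u)|$ for all large $t$; integrating against $K$ and invoking {\rm (I)(ii)} would force $\cA^2_\infty=\infty$, contradicting the previous step. Thus $X_\infty=0$, i.e. $z_t\to0$ $\Pas$. The case {\rm (A),(B),(II)} is handled identically on the nonstandard representation: the jump contribution $[V^-I_{\{\Dl K\neq0\}}+V^+]^+$ is absorbed into $\cA^1$ (and $\bar B$) using {\rm (II)(i)}, while the decreasing part collects $|V^-|I_{\{\Dl K=0\}}+[V^-I_{\{\Dl K\neq0\}}+V^+]^-$, whose annulus infimum diverges by {\rm (II)(ii)}.

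The main obstacle is precisely this passage and the bookkeeping feeding it: conditions {\rm (B)} and {\rm (I)(i)} bound $A^1$ only up to the factor $z_{t-}^2$, so one cannot conclude $\{A^1_\infty<\infty\}=\Om$ directly and must instead separate this factor into the $X_-\circ\bar B$ term of Corollary~\ref{c2.4}; this forces the verification of $\cA\prec\cA^1$ from \emph{both} directions of the quadratic estimates, which is the delicate point. The final ``finite limit $\Ra$ zero'' step then rests on the persistent--excitation conditions {\rm (I)(ii)}/{\rm (II)(ii)}; here the one technical care is the localization (fix rational $\ve$, work on $\{X_\infty>0,\ \ve\leq|z_{t-}|\leq1/\ve\ \fa\, t\geq N\}$ for some integer $N$, and use that the tail $\int_N^\infty\inf_{\ve\leq|u|\leq1/\ve}|V_t^-(u)|\,dK_t$ diverges by {\rm (I)(ii)}) to produce the contradiction with $\cA^2_\infty<\infty$.
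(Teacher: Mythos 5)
Your proof is correct, and the skeleton is the paper's: It\^o decomposition of $z^2$, the standard/nonstandard representations (\ref{3.3})(1)--(2) feeding the convergence results of Subsection \ref{s2}, and the final contradiction from (I)\,(ii) (resp.\ (II)\,(ii)) on the event $\{z^2_\infty>0\}$. The one genuine difference is which corollary you invoke. The paper applies Corollary \ref{c2.1} directly to the pair $A^1,A^2$ of (\ref{3.3}): the weight $(1+z_-^2)^{-1}$ absorbs the quadratic growth of $A^1$ supplied by (B)\,(ii) and (I)\,(i), giving $(1+z_-^2)^{-1}\circ A^1_\infty\leq B\circ K_\infty+C^2\Dl K\circ K^d_\infty<\infty$ identically, whence $\{z^2\to\}\cap\{A^2_\infty<\infty\}=\Om$ in one line. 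You instead peel the $z_-^2$-dependent part of $A^1$ off into a multiplicative drift $X_-\circ\bar B$ and apply Corollary \ref{c2.4}; since that corollary is itself proved by passing to $X\cE^{-1}(B)$ and reducing to the weighted statement, the two routes are equivalent. Yours costs a little extra bookkeeping (the two-sided verification $\cA\leq\cA^1-\cA^2$ and $\cA\prec\cA^1$ for the shifted drift, which you carry out correctly, including the factor $2$ in $(1+|u|)^2\leq 2(1+u^2)$), but it makes explicit where each hypothesis enters; you also spell out the localization over rational $\ve$ and integer $N$ in the final step, which the paper compresses to ``by simple arguments.'' Both arguments conclude identically.
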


\begin{proof}
Assume, for example, that the conditions (A), (B) and (I) are satisfied. Then by virtue of Corollary \ref{c2.1} and (\ref{3.2}) with standard representation (\ref{3.3})(1) of process $A$ we get
\begin{equation}\label{3.4}
    \{(1+z_-^2)^{-1} \circ A_\infty^1<\infty\} \sbseq \{z^2\to\} \cap
        \{A_\infty^2<\infty\}.
\end{equation}
But from conditions (B) and (I)~(i) we have
$$
    \{(1+z_-^2)^{-1} \circ A_\infty^1<\infty\}=\Om \;\; (\Pas)
$$
and so
\begin{equation}\label{3.5}
    \{z^2\to \}\cap \{ A_\infty^2<\infty\}=\Om \;\; (\Pas).
\end{equation}
Denote $z_\infty^2=\lim\limits_{t\to \infty} z_t^2$, $N=\{z_\infty^2>0\}$ and assume that $P(N)>0$. In this case from (I)~(ii) by simple arguments we get
$$
    P(|V^-(z_-)|\circ K_\infty=\infty)>0,
$$
which contradicts with (\ref{3.4}). Hence $P(N)=0$.

The proof of the second case is quite similar.

The theorem is proved.
\end{proof}

In the following propositions the relationship between conditions (I) and (II) are given.

\begin{proposition}\label{p3.1}
{\rm (I)}$\Ra${\rm (II)}.
\end{proposition}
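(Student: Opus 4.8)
The plan is to deduce the two parts of (II) separately from (I), using throughout that $V_t^-(u)=2H_t(u)u\le 0$ by (A), whereas $V_t^+(u)=H_t^2(u)\Dl K_t\ge 0$.

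For (II)(i) I would argue pointwise in $t$. On $\{\Dl K_t=0\}$ the quantity $V_t^-(u)I_{\{\Dl K_t\ne 0\}}+V_t^+(u)$ equals $0$, so its positive part vanishes. On $\{\Dl K_t\ne 0\}$ the inequality $V_t^-(u)\le 0$ gives $[V_t^-(u)+V_t^+(u)]^+\le V_t^+(u)=H_t^2(u)\Dl K_t$, and then the linear growth bound (I)$(i_1)$ together with $(1+|u|)^2\le 2(1+u^2)$ yields $H_t^2(u)\Dl K_t\le 2C_t^2\Dl K_t\,(1+u^2)$. Thus (II)(i) holds with $D_t:=2C_t^2\Dl K_t$, which is nonnegative and predictable; moreover $D\circ K_\infty^d=2\,(C^2\Dl K\circ K_\infty^d)<\infty$ by (I)$(i_2)$.

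For (II)(ii) I would write the integrand as $G_t(u):=|V_t^-(u)|I_{\{\Dl K_t=0\}}+[V_t^-(u)I_{\{\Dl K_t\ne 0\}}+V_t^+(u)]^-$ and split $dK$ into its nonatomic part $dK^c$ and the atoms of $K$. Since $\Dl K=0$ on the support of $dK^c$, there $G_t(u)=|V_t^-(u)|$; at an atom $s$ of $K$, using $[a]^-=\max(0,-a)$ with $a=V_s^-(u)+V_s^+(u)=-|V_s^-(u)|+V_s^+(u)$, one gets $G_s(u)=[|V_s^-(u)|-V_s^+(u)]^+$. Hence the continuous parts cancel in the difference and
$$
    |V^-(u)|\circ K_\infty-G(u)\circ K_\infty
        =\sum_s\bigl(|V_s^-(u)|-[|V_s^-(u)|-V_s^+(u)]^+\bigr)\Dl K_s .
$$
The elementary identity $x-(x-y)^+=\min(x,y)\le y$ (for $x,y\ge 0$) then bounds the right-hand side by $\sum_s V_s^+(u)\Dl K_s=\sum_s H_s^2(u)(\Dl K_s)^2\le(1+|u|)^2\,(C^2\Dl K\circ K_\infty^d)$, where the last step uses (I)$(i_1)$ at the atoms.

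Finally I would fix $\ve>0$: for $|u|\le 1/\ve$ the correction above is at most $(1+1/\ve)^2\,(C^2\Dl K\circ K_\infty^d)$, a finite random variable ($\Pas$ by (I)$(i_2)$) that does not depend on $u$. Therefore $G(u)\circ K_\infty\ge|V^-(u)|\circ K_\infty-(1+1/\ve)^2(C^2\Dl K\circ K_\infty^d)$ uniformly on $\ve\le|u|\le 1/\ve$, so taking the infimum and invoking (I)(ii) gives (II)(ii). The step I expect to be the crux is exactly this uniform control of the gap between $|V^-(u)|\circ K_\infty$ and $G(u)\circ K_\infty$ by a single finite random variable over the whole band $\ve\le|u|\le 1/\ve$: the factor $(1+|u|)^2$ is what $(i_1)$ produces and $(i_2)$ is what renders the bound finite, so that subtracting it from the infinite quantity in (I)(ii) leaves $\infty$.
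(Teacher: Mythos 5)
Your proposal is correct and follows essentially the same route as the paper: for (II)(i) the bound $[V^-I_{\{\Delta K\neq 0\}}+V^+]^+\le V^+\le \mathrm{const}\cdot C^2\Delta K(1+u^2)$ with $D=\mathrm{const}\cdot C^2\Delta K$, and for (II)(ii) the observation that the integrand exceeds $|V^-(u)|-V^+(u)$, whose correction term integrates to a finite random variable on the band $\ve\le|u|\le 1/\ve$ by (I)$(i_2)$, so that (I)(ii) survives the subtraction. Your version merely spells out the atom-by-atom bookkeeping (and keeps the factor $2$ in $(1+|u|)^2\le 2(1+u^2)$ that the paper silently absorbs), which the paper compresses into a single displayed inequality.
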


\begin{proof}
From (I) ($i_1$) we have
$$
    [V_t^-(u) I_{\{\Dl K_t\neq 0\}} +V_t^+(u)]^+ \leq
        V_t^+(u) \leq C_t^2\Dl K_t(1+u^2)
$$
and if take $D_t=C_t^2 \Dl K_t$, then (II) (i) follows from (I)~$(i_2)$.

Further, from (I)~$(i_1)$ we have for each $\ve>0$ and $u$ with  $\ve\leq |u|\leq 1/\ve$
\begin{multline*}
    \quad |V_t^-(u)| I_{\{\Dl K_t=0\}} +[V_t^-(u)+V_t^+(u)]^-I_{\{\Dl K_t\neq 0\}}\\
    \geq |V_t^-(u)|-V_t^+(u) \geq |V_t^-(u)|-C_t^2 \Dl K_t
        \Big(1+\frac{1}{\ve^2}\Big). \quad
\end{multline*}
Now (II)~(ii) follows from (I)~$(i_2)$ and (I)~(ii).

The proposition is proved.
\end{proof}

\begin{proposition}\label{p2.2}
    Under {\rm (I) (i)} we have {\rm (I) (ii)} $\Lra$ {\rm (II) (ii)}.
\end{proposition}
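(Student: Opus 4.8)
The plan is to compare, pointwise in $u$, $t$ and $\om$, the integrand
$$
    \Phi_t(u):=|V_t^-(u)|\,I_{\{\Dl K_t=0\}}+[V_t^-(u)I_{\{\Dl K_t\neq 0\}}+V_t^+(u)]^-
$$
of (II)(ii) with the integrand $|V_t^-(u)|$ of (I)(ii), and then to integrate the resulting inequalities against $K$. First I would record that $V_t^-(u)\leq 0$ by (A) while $V_t^+(u)=H_t^2(u)\Dl K_t\geq 0$, so on $\{\Dl K_t\neq 0\}$ the bracket equals $(|V_t^-(u)|-V_t^+(u))^+$ and $\Phi_t(u)\leq|V_t^-(u)|$ everywhere. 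Taking the infimum over $\ve\leq|u|\leq 1/\ve$ and integrating against $K$ this already gives (II)(ii)~$\Ra$~(I)(ii); note that this direction uses neither (I)(i$_1$) nor (I)(i$_2$).

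The substance is the reverse implication, which is where (I)(i) enters. I would use the elementary identity $a-(a-b)^+=\min(a,b)$ for $a,b\geq 0$ to write
$$
    |V_t^-(u)|-\Phi_t(u)=\min(|V_t^-(u)|,V_t^+(u))\,I_{\{\Dl K_t\neq 0\}}\leq V_t^+(u).
$$
On the range $|u|\leq 1/\ve$ condition (I)(i$_1$) then bounds the right-hand side by a quantity free of $u$:
$$
    V_t^+(u)=H_t^2(u)\Dl K_t\leq C_t^2(1+|u|)^2\Dl K_t\leq C_t^2(1+1/\ve)^2\,\Dl K_t.
$$
Since this majorant does not depend on $u$, it survives the infimum, yielding
$$
    \inf_{\ve\leq|u|\leq 1/\ve}\Phi_t(u)\geq\inf_{\ve\leq|u|\leq 1/\ve}|V_t^-(u)|-C_t^2(1+1/\ve)^2\,\Dl K_t.
$$

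The final step is to integrate this against $K$ over $[0,\infty)$. Because $K^c$ puts no mass on the at most countably many jump times of $K$, the error term integrates to $(1+1/\ve)^2\,(C^2\Dl K\circ K_\infty^d)$, which is finite $\Pas$ by (I)(i$_2$). Hence
$$
    \Big(\inf_{\ve\leq|u|\leq 1/\ve}\Phi\Big)\circ K_\infty\geq\Big(\inf_{\ve\leq|u|\leq 1/\ve}|V^-|\Big)\circ K_\infty-(1+1/\ve)^2\,(C^2\Dl K\circ K_\infty^d),
$$
so if the right-hand integral is infinite for every $\ve$ (i.e.\ (I)(ii) holds) then so is the left-hand one, which is exactly (II)(ii). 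I expect the only delicate point to be this reverse direction: one must show that suppressing $V^+$ at the jumps of $K$ costs only a finite amount, and the key realization is that (I)(i$_2$) — equivalently $\sum_s C_s^2(\Dl K_s)^2<\infty$ — is precisely the integrability that makes the accumulated jump error finite once the $u$-free majorant has been pulled out of the infimum. The predictability of the infimum processes, needed for the integrals to make sense, is already implicit in the formulation of (I)(ii) and (II)(ii), and I would treat it as routine.
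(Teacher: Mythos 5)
Your proof is correct and follows essentially the same route as the paper: the reverse implication is the trivial pointwise bound $\Phi_t(u)\le|V_t^-(u)|$, and the forward implication is exactly the inequality chain in the proof of Proposition 1.2.1, namely $\Phi_t(u)\ge|V_t^-(u)|-V_t^+(u)\ge|V_t^-(u)|-C_t^2\Dl K_t\cdot\const(\ve)$ on $\ve\le|u|\le 1/\ve$, with the error term integrable by (I)(i$_2$). You simply spell out the details (the $\min(a,b)$ identity and the reduction of the error integral to $K^d$) that the paper leaves implicit by citing the previous proposition.
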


Proof immediately follows from previous proposition and trivial implication (II)~(ii)$\Ra$(I)~(ii).

\subsection{Some simple sufficient conditions for (I) and (II)}\label{s4}

Introduce the following group of conditions: for each $u\in R^1$ and $t\in [0,\infty)$ \\[2mm]
(S.1)
\begin{align}
&{\rm (i)}  &\hskip-1cm (i_1) \;\;& G_t|u|\leq |H_t(u)| \leq \wt G_t|u|, \;\; G_t\geq 0, \;\;
    G=(G_t)_{t\geq 0}, \notag \\
     &&& \wt G=(\wt G_t)_{t\geq 0} \in \cP, \;\; \wt G\circ K_t<\infty, \notag \\
&&\hskip-1cm (i_2) \;\;& {\wt G}{\,}^2 \Dl K\circ K_\infty^d<\infty;  \notag \\
&{\rm (ii)} && G\circ K_\infty=\infty; \label{4.1}
\end{align}

\noindent (S.2)
\begin{align}
{\rm (i)} \qquad & \wt G[-2+\wt G \Dl K]^+ \circ K_\infty^d<\infty; \quad \label{4.2} \\
{\rm (ii)} \qquad & G\{2I_{\{\Dl K=0\}} +[-2+\wt G\Dl K]^- I_{\{\Dl K\neq 0\}} \circ
    K_\infty=\infty. \quad \label{4.3}
\end{align}

\begin{proposition}\label{p4.1}
\begin{gather*}
    {\rm (S.1)} \Ra {\rm (I)}, \\
    {\rm (S.1)}(i_1),\; {\rm (S.2)} \Ra {\rm (II)}.
\end{gather*}
\end{proposition}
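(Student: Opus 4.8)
The plan is to verify each numbered condition of (I) and (II) by direct substitution of the bounds furnished by (S.1) and (S.2), using the explicit formulas $V_t^-(u)=2H_t(u)u$ and $V_t^+(u)=H_t^2(u)\Dl K_t$ together with the two-sided estimate $G_t|u|\leq|H_t(u)|\leq\wt G_t|u|$. The implication ${\rm (S.1)}\Ra{\rm (I)}$ is the routine half, so I would dispatch it first, and then attack ${\rm (S.1)}(i_1),{\rm (S.2)}\Ra{\rm (II)}$, where the algebraic bookkeeping is heavier.

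For ${\rm (S.1)}\Ra{\rm (I)}$, I would proceed as follows. Condition (I)$(i_1)$ asks for a predictable bound $I_{\{\Dl K_t\neq0\}}|H_t(u)|\leq C_t(1+|u|)$ with $C\circ K_t<\infty$; from ${\rm (S.1)}(i_1)$ we have $|H_t(u)|\leq\wt G_t|u|\leq\wt G_t(1+|u|)$, so I would simply take $C_t=\wt G_t$, and then $C\circ K_t=\wt G\circ K_t<\infty$ is exactly ${\rm (S.1)}(i_1)$. With this choice $C^2\Dl K\circ K_\infty^d={\wt G}{\,}^2\Dl K\circ K_\infty^d<\infty$ is precisely ${\rm (S.1)}(i_2)$, giving (I)$(i_2)$. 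For (I)(ii) I would use the lower bound: $|V_t^-(u)|=2|H_t(u)||u|\geq 2G_t u^2$, so for $\ve\leq|u|\leq1/\ve$ one gets $|V_t^-(u)|\geq 2\ve^2 G_t$, whence $\inf_{\ve\leq|u|\leq1/\ve}|V^-(u)|\circ K_\infty\geq 2\ve^2\,(G\circ K_\infty)=\infty$ by ${\rm (S.1)}$(ii). This closes the first implication.

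For the second implication, the main obstacle is (II)(i), because the quantity $[V_t^-(u)I_{\{\Dl K_t\neq0\}}+V_t^+(u)]^+$ mixes the negative drift term $V_t^-(u)=2H_t(u)u<0$ with the positive jump term $V_t^+(u)=H_t^2(u)\Dl K_t$, and I must bound this combination by $D_t(1+u^2)$ with $D\circ K_\infty^d<\infty$. On $\{\Dl K_t\neq0\}$ I would factor out $H_t^2(u)$ wherever possible, writing $2H_t(u)u+H_t^2(u)\Dl K_t=H_t(u)u\,[2\,\tfrac{H_t(u)}{u}\cdot\tfrac{u}{H_t(u)}\cdots]$; more cleanly, since $H_t(u)u<0$, I would estimate $2H_t(u)u+H_t^2(u)\Dl K_t\leq |H_t(u)|\,|u|\,(-2+|H_t(u)|\,|u|^{-1}\Dl K_t)$ and then use $|H_t(u)|\leq\wt G_t|u|$ to bound the bracket by $(-2+\wt G_t\Dl K_t)$; taking positive parts and using $|H_t(u)||u|\leq\wt G_t u^2$ should produce a bound of the shape $\wt G_t[-2+\wt G_t\Dl K_t]^+\,u^2\leq\wt G_t[-2+\wt G_t\Dl K_t]^+(1+u^2)$, so that $D_t=\wt G_t[-2+\wt G_t\Dl K_t]^+$ and $D\circ K_\infty^d<\infty$ is exactly the hypothesis \eqref{4.2}. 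Care is needed because the sign of the bracket forces the positive part, and the monotonicity of $[a]^+$ in $a$ together with the upper bound on $|H_t(u)|$ must be applied in the correct direction; this is the step I expect to require the most attention.

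Finally, for (II)(ii) I would obtain a lower bound on $|V_t^-(u)|I_{\{\Dl K_t=0\}}+[V_t^-(u)I_{\{\Dl K_t\neq0\}}+V_t^+(u)]^-$. On $\{\Dl K_t=0\}$ the first term is $2|H_t(u)||u|\geq 2G_t u^2\geq 2G_t\ve^2$; on $\{\Dl K_t\neq0\}$ the negative part equals $[2H_t(u)u+H_t^2(u)\Dl K_t]^-\geq |H_t(u)||u|\,[-2+\wt G_t\Dl K_t]^-\geq G_t u^2[-2+\wt G_t\Dl K_t]^-\geq G_t\ve^2[-2+\wt G_t\Dl K_t]^-$, again using the two-sided bound on $|H_t(u)|$ in the appropriate direction. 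Combining the two regions gives a lower bound of the form $\ve^2\,G_t\{2I_{\{\Dl K_t=0\}}+[-2+\wt G_t\Dl K_t]^-I_{\{\Dl K_t\neq0\}}\}$, whose $K$-integral diverges by \eqref{4.3}, yielding (II)(ii). Assembling (II)(i) and (II)(ii) completes the proof.
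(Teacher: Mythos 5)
Your proposal is correct and follows essentially the same route as the paper: the first implication is handled by the direct choice $C_t=\wt G_t$, and the second rests on the same key inequality $V_t^-(u)I_{\{\Dl K_t\neq 0\}}+V_t^+(u)\leq |H_t(u)|\,|u|\,[-2I_{\{\Dl K_t\neq 0\}}+\wt G_t\Dl K_t]$, followed by taking positive and negative parts and applying the two-sided bound on $|H_t(u)|$ in the appropriate directions, exactly as in the paper's proof (with the same choice $D_t=\wt G_t[-2+\wt G_t\Dl K_t]^+$ up to an immaterial indicator).
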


\begin{proof}
The first implication is evident. For the second, note that
\begin{align}
    V_t^-(u) I_{\{\Dl K_t\neq 0\}}+V_t^+(u) & =
        -2|H_t(u)|\,|u| I_{\{\Dl K_t\neq 0\}} +H_t^2(u) \Dl K_t \notag \\
    & \leq |H_t(u)|\,|u|\, [-2I_{\{\Dl K_t\neq 0\}} +\wt G_t \Dl K_t]. \label{4.4}
\end{align}
So
\begin{align*}
    [V_t^-(u) I_{\{\Dl K_t\neq 0\}}+V_t^+(u)]^+ & \leq
        |H_t(u)|\,|u|\, [-2I_{\{\Dl K_t\neq 0\}} +\wt G_t \Dl K_t]^+  \\
    & \leq \wt G_t [-2I_{\{\Dl K_t\neq 0\}} +\wt G_t \Dl K_t]^+|u^2|
\end{align*}
and (II)~(i) follows from (\ref{4.2}) if we take
$$
    D_t=\wt G_t[-2+\wt G_t \Dl K_t]^+ I_{\{\Dl K_t\neq 0\}}.
$$

Further, from (\ref{4.4}) we have
\begin{align*}
    |V_t^-(u)| I_{\{\Dl K_t=0\}} & +[V_t^-(u) I_{\{\Dl K_t\neq 0\}} +V_t^+(u)]^- \\
    & \geq u^2 G_t\{2I_{\{\Dl K_t=0\}} +[-2I_{\{\Dl K_t\neq 0\}} +\wt G_t\Dl K_t]^-\}
\end{align*}
and (II) (ii) follows from (\ref{3.3}).

Proposition is proved.
\end{proof}

\begin{remark}\label{r4.1}\

    a) (S.1)$\Ra$(S.2),

    b) under (S.1) (i) we have (S.1) (ii)$\Lra$(S.2) (ii),

    c) (S.2) (ii)$\Ra$(S.1) (ii).
\end{remark}

Summarizing the above we come to the following conclusions: a) if the condition (S.1) (ii) is not satisfied, then (S.2) (ii) is not satisfied also; b)~if (S.1) $(i_1)$ and (S.1) (ii) are satisfied, but (S.1) $(i_2)$ is violated, then nevertheless the conditions (S.2) (i) and (S.2) (ii) can be satisfied.

In this case the nonstandard representations (\ref{3.3})(2) is useful.

\begin{remark}\label{r4.2}
Denote
$$
    \wt G_t \Dl K_t =2+\dl_t, \quad \dl_t\geq -2 \;\;\;\text{for all} \;\;\;
        t\in [0,\infty).
$$
It is obvious that if $\dl_t\leq 0$ for all $t\in [0,\infty)$, then $[-2+\wt G_t \Dl K_t]^+=0$. So (S.2) (i) is trivially satisfied and (S.2) (ii) takes the form
\begin{equation}\label{4.5}
    G\{ 2I_{\{\Dl K=0\}} +|\dl| I_{\{\Dl K\neq 0\}} \circ K_\infty=\infty.
\end{equation}

Note that if $G\cdot \min(2,|\dl|)\circ K_\infty=\infty$, then (\ref{4.5}) holds, and the simplest sufficient condition (\ref{4.5}) is: for all $t\geq 0$
$$
    G\circ K_\infty=\infty, \quad |\dl_t|\geq const>0.
$$
\end{remark}

\begin{remark}\label{r4.3}
Let the conditions (A), (B) and (I) be satisfied. Since we apply Theorem \ref{t2.1} and its Corollaries on the semimartingales convergence sets given in subsection \ref{s2}, we get rid of many of ``usual'' restrictions: ``moment'' restrictions, boundedness of regression function, etc.
\end{remark}


\subsection{Examples}\label{s5}

\subsubsection{Recursive parameter estimation procedures for statistical models associated with semimartingale}
\subsubsection*{$1.$ Basic model and regularity}

Our object of consideration is a parametric filtered statistical model
$$
    \cE=(\Om,\cF,\bF=(\cF_t)_{t\geq 0},\{P_\tht;\tht\in R\})
$$
associated with one-dimensional $\bF$-adapted RCLL process $X=(X_t)_{t\geq 0}$ in the following way: for each $\tht\in R^1$ $P_\tht$ is an unique measure on $(\Om,\cF)$ such that under this measure $X$ is a semimartingale with predictable characteristics $(B(\tht),C(\tht),\nu_\tht)$ (w.r.t. standard truncation function $h(x)=xI_{\{|x|\leq 1\}})$. Assume for simplicity that all $P_\tht$ coincide on $\cF_0$.

Suppose that for each pair $(\tht,\tht')$ $P_\tht \os{\loc}{\sim} P_{\tht'}$. Fix $\tht=0$ and denote $P=P_0$, $B=B(0)$, $C=C(0)$, $\nu=\nu_0$.

Let $\rho(\tht)=(\rho_t(\tht))_{t\geq 0}$ be a local density process (likelihood ratio process)
$$
    \rho_t(\tht) =\frac{dP_{\tht,t}}{dP_t}\,,
$$
where for each $\tht$ $P_{\tht,t}:= P_\tht |\cF_t$, $P_t:=P|\cF_t$ are restrictions of measures $P_\tht$ and $P$ on $\cF_t$, respectively.

As it is well-known (see, e.g., \cite{14n}, Ch. III, \S 3d, Th. 3.24) for each $\tht$ there exists a $\wt{\cP}$-measurable positive function
$$
    Y(\tht)=\{Y(\om,t,x;\tht), \quad (\om,t,x)\in \Om\times R_+\times R\},
$$
and a predicable process $\bt(\tht)=(\bt_t(\tht))_{t\geq 0}$ with
$$
    |h(Y(\tht)-1)|*\nu\in \cA_{\loc}^+(P), \quad
        \bt^2(\tht)\circ C\in \cA_{\loc}^+(P),
$$
and such that
\begin{equation}\label{d5.1}
\begin{aligned}
    (1) \quad & B(\tht)=B+\bt(\tht)\circ C+h(Y(\tht)-1)*\nu, \\
    (2) \quad & C(\tht)=C, \qquad (3) \quad \nu_\tht=Y(\tht)\cdot\nu.
\end{aligned}
\end{equation}

In addition the function $Y(\tht)$ can be chosen in such a way that
$$
    a_t:=\nu(\{t\},R)=1 \LLra a_t(\tht): =\nu_\tht(\{t\},R)=
        \int Y(t,x;\tht) \nu(\{t\})dx =1.
$$

We assume that the model is regular in the Jacod sense (see \cite{15n}, \S 3, Df. 3.12) at each point $\tht$, that is the process $(\rho_{\tht'}/\rho_\tht)^{1/2}$ is locally differentiable w.r.t $\tht'$ at $\tht$ with the derivative process
$$
    L(\tht) =(L_t(\tht))_{t\geq 0}\in \cM_{\loc}^2(P_\tht).
$$
In this case the Fisher information process is defined as
\begin{equation}\label{d5.2}
    \wh I_t(\tht)=\la L(\tht),L(\tht)\ra_t.
\end{equation}

In \cite{15n} (see \S 2-c, Th. 2.28) was proved that the regularity of the model at point $\tht$ is equivalent to the differentiability of characteristics $\bt(\tht)$, $Y(\tht)$, $a(\tht)$ in the following sense: there exist a predictable process $\dot{\bt}(\tht)$ and $\wt\cP$-measurable function $W(\tht)$ with
$$
    \dot{\bt}^2(\tht)\circ C_t<\infty, \quad W^2(\tht)*\nu_{\tht,t}<\infty
        \;\;\;\text{for all} \;\;\; t\in R_+
$$
and such that for all $t\in R_+$ we have as $\tht'\to \tht$
\begin{equation}\label{d5.3}
\begin{aligned}
    (1) \;\; & (\bt(\tht')-\bt(\tht)-\dot{\bt}(\tht)(\tht'-\tht))^2 \circ C_t/
        (\tht'-\tht)^2 \str{P_\tht}{\to} 0, \\
    (2) \;\; & \Bigg( \left(\frac{Y(\tht')}{Y(\tht)}\right)^{1/2} -1 -
        \frac{1}{2}\,W(\tht)(\tht'-\tht)\Bigg)^2 *\nu_{\tht,t} \Big/ (\tht'-\tht)^2
        \str{P_\tht}{\to} 0, \\
    (3) \;\; & \sum_{\substack{s\leq t \\ a_s(\tht)<1}}
        \bigg[ (1-a_s(\tht'))^{1/2} -(1-a_s(\tht))^{1/2}  \\
    & \quad +\frac{1}{2}\,\frac{\wh W_s^\tht(\tht)}{(1-a_s(\tht))^{1/2}}\,
        (\tht'-\tht)\bigg]^2 \Big/ (\tht'-\tht)^2 \str{P_\tht}{\to} 0,
\end{aligned}
\end{equation}
where
$$
    \wh W_t^\tht(\tht) =\int W(t,x;\tht) \nu_\tht(\{t\},dx).
$$

In this case $a_s(\tht)=1 \Ra \wh W_s^\tht(\tht)=0$ and the process $L(\tht)$ can be written as
\begin{equation}\label{d5.4}
    L(\tht)=\dot{\bt}(\tht) \cdot (X^c-\bt(\tht)\circ C)+\bigg(\wh W^\tht(\tht)+
        \frac{\wh W^\tht(\tht)}{1-a(\tht)} \bigg)*(\mu-\nu_\tht),
\end{equation}
and
\begin{equation}\label{d5.5}
    \wh I(\tht)=\dot{\bt}{}^2(\tht)\circ C+(\wh W^\tht(\tht))^2*\nu_\tht+
        \sum_{s\leq \cdot} \frac{(\wh W_s^\tht(\tht))^2}{1-a_s(\tht)}.
\end{equation}

Denote
$$
    \Phi(\tht)=W(\tht)+\frac{\wh W^\tht(\tht)}{1-a(\tht)}\,.
$$

One can consider the another alternative definition of the regularity of the model (see, e.g., \cite{2-21}) based on the following representation of the process $\rho(\tht)$:
$$
    \rho(\tht)=\cE(M(\tht)),
$$
where
\begin{equation}\label{d5.6}
    M(\tht)=\bt(\tht)\cdot X^c+\bigg(Y(\tht)-1+\frac{\wh Y(\tht)-a}{1-a} \,
        I_{\{0<a<1\}}\bigg)*(\mu-\nu)\in \cM_{\loc}(P).
\end{equation}
Here $X^c$ is a continuous martingale part of $X$ under measure $P$ (see, e.g., \cite{16n}, \cite{14}).

We say that the model is regular if for almost all $(\om,t,x)$ the functions $\bt:\tht\to \bt_t(\om;\tht)$ and $Y:\tht\to Y(\om,t,x;\tht)$ are differentiable (notation $\dot{\bt}(\tht):=\frac{\pa}{\pa\tht} \bt(\tht)$, $\dot{Y}(\tht):=\frac{\pa}{\pa\tht} Y(\tht)$) and differentiability under integral sign is possible. Then
$$
    \frac{\pa}{\pa\tht} \,\ln \rho(\tht)=L(\dot{M}(\tht),M(\tht)) :=
        \wt L(\tht)\in \cM_{\loc}(P_\tht),
$$
where $L(m,M)$ is the Girsanov transformation defined as follows: if $m,M\in \cM_{\loc}(P)$ and $Q\ll P$ with $\frac{dQ}{dP}=\cE(M)$, then
$$
    L(m,M):=m-(1+\Dl M)^{-1}\circ [m,M]\in \cM_{\loc}(Q).
$$

It is not hard to verify that
\begin{equation}\label{d5.7}
    \wt L(\tht)=\dot{\bt}(\tht)\cdot (X^c-\bt(\tht)\circ C) +\wt \Phi(\tht) *(\mu-\nu_\tht),
\end{equation}
where
$$
    \wt \Phi(\tht)=\frac{\dot{Y}(\tht)}{Y(\tht)}+\frac{\dot{a}(\tht)}{1-a(\tht)}
$$
with $I_{\{a(\tht)=1\}} \dot{a}(\tht)=0$.

If we assume that for each $\tht\in R^1$ $\wt L(\tht)\in \cM_{\loc}^2(P_\tht)$, then the Fisher information process is
$$
    \wh I_t(\tht)=\la \wt L(\tht),\wt L(\tht)\ra_t.
$$

It should be noticed that from the regularity of the model in the Jacod sense it follows that $L(\tht)\in \cM_{\loc}^2(P_\tht)$, while under the latter regularity conditions $\wt L(\tht)\in \cM_{\loc}^2(P_\tht)$ is an assumption, in general.

In the sequel we assume that the model is regular in both above given senses. Then
$$
    W(\tht)=\frac{\dot{Y}(\tht)}{Y(\tht)}\,, \quad
        \wh W^\tht(\tht)=\dot{a}(\tht), \quad
        L(\tht)=\wt L(\tht).
$$

\subsubsection*{{\rm 2.} Recursive estimation procedure for MLE}

In \cite{18n} an heuristic algorithm was proposed for the construction of recursive estimators of unknown parameter $\tht$ asymptotically equivalent to the maximum likelihood estimator (MLE).

This algorithm was derived using the following reasons:

Consider the MLE $\;\wh \tht=(\wh\tht_t)_{t\geq 0}$, where $\wh\tht_t$ is a solution of estimational equation
$$
    L_t(\tht)=0.
$$

Assume that
\begin{enumerate}
\item[1)] for each $\tht\!\in\! R^1$ the process $(\wh I_t(\tht))^{1/2} (\wh \tht_t-\tht)$ is $P_\tht$-stochastically bounded and, in addition, the process $(\wh\tht_t)_{t\geq 0}$ is a $P_\tht$-semimartingale;

\item[2)] for each pair $(\tht',\tht)$ the process $L(\tht')\in \cM_{\loc}^2(P_{\tht'})$ and is a $P_\tht$-special semimartingale;

\item[3)] the family $(L(\tht),\tht\in R^1)$ is such that the Ito--Ventzel formula is applicable to the process $(L(t,\wh \tht_t))_{t\geq 0}$ w.r.t. $P_\tht$ for each $\tht\in R^1$;

\item[4)] for each $\tht\in R^1$ there exists a positive increasing predictable process $(\gm_t(\tht))_{t\geq 0}$ asymptotically equivalent to $\wh I_t^{-1}(\tht)$, i.e.
$$
    \gm_t(\tht) \wh I_t(\tht) \str{P_\tht}{\to} 1 \quad \text{as} \quad t\to \infty.
$$
\end{enumerate}

Under these assumptions using the Ito--Ventzel formula for the process \linebreak $(L(t,\wh\tht_t))_{t\geq 0}$ we get an  ``implicit'' stochastic equation for $\wh\tht=(\wh\tht_t)_{t\geq 0}$. Analyzing the orders of infinitesimality of terms of this equation and rejecting the high order terms we get the following SDE (recursive procedure)
\begin{equation}\label{d5.8}
    d\tht_t=\gm_t(\tht_{t-}) L(dt,\tht_{t-}),
\end{equation}
where $L(dt,u_t)$ is a stochastic line integral w.r.t. the family $\{L(t,u)$, $u\in R^1$, $t\in R_+\}$ of $P_\tht$-special semimartingales along the predictable curve $u=(u_t)_{t\geq 0}$.

To give an explicit form to the SDE (\ref{d5.8}) for the statistical model associated with the semimartingale $X$ assume for a moment that for each $(u,\tht)$ (including the case $u=\tht$)
\begin{equation}\label{d5.9}
    |\Phi(u)|*\mu\in \cA_{\loc}^+(P_\tht).
\end{equation}

Then for each pair $(u,\tht)$ we have
$$
    \Phi(u)*(\mu-\nu_u)=\Phi(u)* (\mu-\nu_\tht) +\Phi(u)
        \bigg(1-\frac{Y(u)}{Y(\tht)}\bigg)*\nu_\tht.
$$

Based on this equality one can obtain the canonical decomposition of $P_\tht$-special semimartingale $L(u)$ (w.r.t. measure $P_\tht$):
\begin{align}
    L(u) & =\dot{\bt}(u) \circ (X^c-\bt(\tht)\circ C)+\Phi(u)*(\mu-\nu_\tht) \notag \\
    & \quad + \dot{\bt}(u) (\bt(\tht)-\bt(u))\circ C +
        \Phi(u) \bigg(1-\frac{Y(u)}{Y(\tht)}\bigg) *\nu_\tht. \label{d5.10}
\end{align}

Now, using (\ref{d5.10}) the meaning of $L(dt,u_t)$ is
\allowdisplaybreaks
\begin{multline*}
    \int_0^t L(ds,u_{s-}) = \int_0^t \dot{\bt}_s(u_{s-}) d(X^c-\bt(\tht)\circ C)_s \\
    + \int_0^t \int \Phi(s,x,u_{s-}) (\mu-\nu_\tht)(ds, dx)+
        \int_0^t \dot{\bt}_s(u_{s}) (\bt_s(\tht)-\bt_s(u_s))dC_s \\
    + \int_0^t \int \Phi(s,x,u_{s-}) \bigg(1-\frac{Y(s,x,u_{s-})}{Y(s,x,\tht)}\bigg)
        \nu_\tht(ds,dx).
\end{multline*}

Finally, the recursive SDE (\ref{d5.8}) takes the form
\allowdisplaybreaks
\begin{align}
    \tht_t=\tht_0& + \int_0^t \gm_s(\tht_{s-}) \dot{\bt}_s(\tht_{s-})
        d(X^c-\bt(\tht)\circ C)_s \notag \\
    & + \int_0^t \int \gm_s(\tht_{s-}) \Phi(s,x,\tht_{s-}) (\mu-\nu_\tht)
        (ds,dx) \notag \\
    & + \int_0^t \gm_s(\tht) \dot{\bt}_s(\tht_s)(\bt_s(\tht)-\bt_s(\tht_s))dC_s \notag \\
    & + \int_0^t \int \gm_s(\tht_{s-}) \Phi(s,x,\tht_{s-})
        \bigg(1-\frac{Y(s,x,\tht_{s-})}{Y(s,x,\tht)}\bigg) \nu_\tht(ds,dx). \label{d5.11}
\end{align}

\begin{remark}\label{dr5.1}
One can give more accurate than (\ref{d5.9}) sufficient conditions (see, e.g., \cite{12n}, \cite{14n}, \cite{proc12}) to ensure the validity of decomposition (\ref{d5.10}).
\end{remark}

Assume that there exists an unique strong solution $(\tht_t)_{t\geq 0}$ of the SDE (\ref{d5.11}).

To investigate  the asymptotic properties of recursive estimators $(\tht_t)_{t\geq 0}$ as $t\to \infty$, namely, a strong consistency, rate of convergence and asymptotic expansion we reduce the SDE (\ref{d5.11}) to the Robbins--Monro type SDE.

For this aim denote $z_t=\tht_t-\tht$. Then (\ref{d5.11}) can be rewritten as
\begin{align}
    z_t=z_0& + \int_0^t \gm_s(\tht+z_{s-}) \dot{\bt}(\tht+z_{s-})
        (\bt_s(\tht)-\bt_s(\tht+z_{s-}) dC_s \notag \\
    & + \int_0^t \int \gm_s(\tht+z_{s-}) \Phi(s,x,\tht+z_{s-})
        \bigg(1-\frac{Y(s,x,\tht+z_{s-})}{Y(s,x,\tht)}\bigg) \nu_\tht
        (ds,dx) \notag \\
    & + \int_0^t \gm_s(\tht+z_{s}) \dot{\bt}_s(\tht+z_s)d(X^c-\bt(\tht)\circ C)_s
        \notag \\
    & + \int_0^t \int \gm_s(\tht+z_{s-}) \Phi(s,x,\tht+z_{s-})
        (\mu-\nu_\tht)(ds,dx). \label{d5.12}
\end{align}

For the definition of the objects $K^\tht$, $\{H^\tht(u)$, $u\in R^1\}$ and $\{M^\tht(u)$, $u\in R^1\}$ we consider such a version of characteristics $(C,\nu_\tht)$ that
\begin{align*}
    C_t & =C^\tht \circ A_t^\tht, \\
    \nu_\tht(\om,dt,dx) & =dA_t^\tht B_{\om,t}^\tht (dx),
\end{align*}
where $A^\tht=(A_t^\tht)_{t\geq 0} \in \cA_{\loc}^+(P_\tht)$, $C^\tht=(C_t^\tht)_{t\geq 0}$ is a nonnegative predictable process, and $B_{\om,t}^\tht(dx)$ is a transition kernel from $(\Om\times R_+,\cP)$ in $(R,\cB(R))$ with $B_{\om,t}^\tht(\{0\})=0$ and
$$
    \Dl A_t^\tht B_{\om,t}^\tht(R)\leq 1
$$
(see \cite{14n}, Ch. 2, \S 2, Prop. 2.9).

Put $K_t^\tht =A_t^\tht,$
\begin{align}
    H_t^\tht(u) & =\gm_t(\tht+u) \bigg\{ \dot{\bt}_t(\tht+u)
        (\bt_t(\tht)-\bt_t(\tht+u)) C_t^\tht \notag \\
    & \quad +\int \phi(t,x,\tht+u) \bigg(1-\frac{Y(t,x,\tht+u)}{Y(t,x,\tht)}\bigg)
        B_{\om,t}^\tht(dx)\bigg\}, \label{d5.13} \\
    M^\tht(t,u) & =\int_0^t \gm_s(\tht+u) \dot{\bt}_s(\tht+u)
        d(X^c-\bt(\tht)\circ C)_s \notag \\
    & \quad +\int_0^t \int \gm_s(\tht+u) \Phi(s,x,\tht+u)(\mu-\nu_\tht)(ds,dx).
        \label{d5.14}
\end{align}

Assume that for each $u$ $M^\tht(u)=(M^\tht(t,u))_{t\geq 0} \in \cM_{\loc}^2(P_\tht)$. Then
\begin{align*}
    \la M^\tht(u)\ra_t & =\int_0^t (\gm_s(\tht+u)\dot{\bt}_s(\tht+u))^2
        C_s^\tht dA_s^\tht \\
    & \quad + \int_0^t \gm_s^2(\tht+u) \bigg( \int \Phi^2(s,x,\tht+u)
        B_{\om,s}^\tht(dx)\bigg) dA_s^{\tht,c} \\
    & \quad + \int_0^t \gm_s^2(\tht+u) B_{\om,t}^\tht(R)
        \bigg\{ \int \Phi^2(s,x,\tht+u)  q_{\om,s}^\tht(dx) \\
    & \quad -a_s(\tht)
        \bigg( \int \Phi(s,x,\tht+u) q_{\om,s}^\tht(dx)\bigg)^2\bigg\}
            dA_s^{\tht,d},
\end{align*}
where $a_s(\tht)=\Dl A_s^\tht B_{\om,s}^\tht(R)$, $q_{\om,s}^\tht(dx) I_{\{a_s(\tht)>0\}}= \frac{B_{\om,s}^\tht(dx)}{B_{\om,s}^\tht(R)} \,I_{\{a_s(\tht)>0\}}$.

Now we give a more detailed description of $\Phi(\tht)$, $\wh I(\tht)$, $H^\tht(u)$ and $\la M^\tht(u)\ra$. Denote
$$
    \frac{d\nu_\tht^c}{d\nu^c} :=F(\tht), \quad
        \frac{q_{\om,t}^\tht(dx)}{q_{\om,t}(dx)} :=
        f_{\om,t}(x,\tht) \;\;\;(:=f_t(\tht)).
$$
Then
$$
    Y(\tht) =F(\tht) I_{\{a=0\}}+\frac{a(\tht)}{a} \,f(\tht) I_{\{a>0\}}
$$
and
$$
    \dot{Y}(\tht) =\dot{F}(\tht) I_{\{a=0\}} +
        \left( \frac{\dot{a}(\tht)}{a}\,f(\tht) +
        \frac{a(\tht)}{a}\,\dot{f}(\tht)\right) I_{\{a>0\}}.
$$
Therefore
\begin{equation}\label{d5.15}
    \Phi(\tht) =\frac{\dot{F}(\tht)}{F(\tht)}\, I_{\{a=0\}} +
        \bigg\{ \frac{\dot{f}(\tht)}{f(\tht)} +
        \frac{\dot{a}(\tht)}{a(\tht)(1-a(\tht))} \bigg\} I_{\{a>0\}}
\end{equation}
with $I_{\{a(\tht)>0\}} \int \frac{\dot{f}(\tht)}{f(\tht)}\,q^\tht(dx)=0$.

Denote $\dot{\bt}(\tht) =\ell^c(\tht)$, $\frac{\dot{F}(\tht)}{F(\tht)} := \ell^\pi(\tht)$, $\frac{\dot{f}(\tht)}{f(\tht)} :=\ell^\dl(\tht)$, $\frac{\dot{a}(\tht)}{a(\tht)(1-a(\tht))} :=\ell^b(\tht)$.

Indices $i=c,\pi,\dl,b$ carry the following loads: ``$c$'' corresponds to the continuous part, ``$\pi$''   to the Poisson type part, ``$\dl$'' to the predictable moments of jumps (including a main special case -- the discrete time case), ``$b$'' to the binomial type part of the likelihood score $\ell(\tht)=(\ell^c(\tht), \ell^\pi(\tht), \ell^\dl(\tht), \ell^b(\tht))$.

In these notations we have  for the Fisher information process:
\allowdisplaybreaks
\begin{align}
    \wh I_t(\tht) & =\int_0^t (\ell_s^c(\tht))^2 dC_s +
        \int_0^t \int (\ell_s^\pi(x;\tht))^2 B_{\om,s}^\tht(dx) dA_s^{\tht,c} \notag \\
    & \quad + \int_0^t B_{\om,s}^\tht(R)
        \bigg[ \int(\ell_s^\dl(x;\tht))^2 q_{\om,s}^\tht(dx) \bigg] dA_s^{\tht,d}
            \notag \\
    & \quad +\int_0^t (\ell_s^b(\tht))^2 (1-a_s(\tht))dA_s^{\tht,d}. \label{d5.16}
\end{align}

For the random field $H^\tht(u)$ we have:
\begin{align}
    H_t^\tht(u) & =\gm_t(\tht+u) \bigg\{ \ell_t^c(\tht+u)
        (\bt_t(\tht)-\bt_t(\tht+u))C_t^\tht \notag \\
    & \quad +\int \ell_t^\pi(x;\tht+u) \bigg( 1-\frac{F_t(x;\tht+u)}{F_t(x;\tht)}\bigg)
        B_{\om,t}^\tht(dx) I_{\{\Dl A_t^\tht=0\}} \notag \\
    & \quad +\bigg\{ \int \ell_t^\dl(x;\tht+u) q_{\om,t}^\tht(dx)  \notag \\
    & \hskip+2cm +
        \ell_t^b(\tht+u) \,\frac{a_t(\tht)-a_t(\tht+u)}{a_t(\tht)}\bigg\}
        B_{\om,t}^\tht(R) I_{\{\Dl A_t^\tht>0\}}. \label{d5.17}
\end{align}
Finally, we have for $\la M^\tht(u)\ra$:
\begin{align}
    \la M^\tht(u)\ra_t & = \left(\gm(\tht+u) \ell^c(\tht+u)\right)^2 C^\tht \circ A_t^\tht \notag \\
    & \quad + \int_0^t \gm_s^2(\tht+u)\int (\ell_s^\pi(x;\tht+u))^2
        B_{\om,t}^\tht(dx)dA_s^{\tht,c} \notag \\
    & \quad + \int_0^t \gm_s^2(\tht+u) B_{\om,s}^\tht(R)
        \bigg\{ \int (\ell_s^\dl(x;\tht+u)+\ell_s^b(\tht+u))^2 q_{\om,s}^\tht(dx)
        \notag \\
    & \quad - a_s(\tht) \bigg( \int (\ell_s^\dl(x;\tht+u)+\ell_s^b(\tht+u))
        q_{\om,s}^\tht(dx) \bigg)^2 \bigg\} dA_s^{\tht,d}. \label{d5.18}
\end{align}

Thus, we reduced SDE (\ref{d5.12}) to the Robbins--Monro type SDE with $K_t^\tht=A_t^\tht$, and $H^\tht(u)$ and $M^\tht(u)$ defined by (\ref{d5.17}) and (\ref{d5.14}), respectively.

As it follows from (\ref{d5.17})
$$
    H_t^\tht(0)=0 \quad \text{for all} \;\;\; t\geq 0, \;\; P_\tht\text{-}a.s.
$$

As for condition (A)  to be satisfied it ie enough to require that for all $t\geq 0$, $u\neq 0$ $P_\tht$-$a.s$.
\begin{gather*}
    \dot{\bt}_t(\tht+u) (\bt_t(\tht)-\bt_t(\tht+u))<0, \\
    \bigg( \int \frac{\dot{F}(t,x,\tht+u)}{F(t,x,\tht+u)}
        \bigg(1-\frac{F(t,x;\tht+u)}{F(t,x;\tht)} \bigg) B_{\om,t}^\tht (dx) \bigg)
            I_{\{\Dl A_t^\tht=0\}}u<0, \\
    \bigg( \int \frac{\dot{f} (t,x;\tht+u)}{f(t,x;\tht+u)} \,q_t^\tht(dx)\bigg)
        I_{\{\Dl A_t^\tht>0\}}u<0,\\
    \dot{a}_t(\tht+u)(a_t(\tht)-a_t(\tht+u))u<0,
\end{gather*}
and the simplest sufficient conditions for the latter ones is the monotonicity ($P$-a.s.) of functions $\bt(\tht)$, $F(\tht)$, $f(\tht)$ and $a(\tht)$ w.r.t $\tht$.

\begin{remark}\label{dr5.3}
In the case when the model is regular in the  Jacod sense only we save the same form of all above-given objects (namely of $\Phi(\tht)$) using the formal definitions:
\begin{align*}
    & \frac{\dot{F}(\tht)}{F(\tht)}\,I_{\{a(\tht)=0\}} := W(\tht) I_{\{a(\tht)=0\}}, \\
    & \dot{a}(\tht):= \wh W^\tht, \\
    & \frac{\dot{f}(\tht)}{f(\tht)} := W(\tht) I_{\{a(\tht)>0\}} -
        \frac{\wh W^\tht(\tht)}{a(\tht)} \,I_{\{a(\tht)>0\}}.
\end{align*}
\end{remark}

\subsubsection{Discrete time}
\subsubsection*{a) Recursive MLE in parameter statistical models}
Let $X_0,X_1,\dots,X_n,\dots$ be observations taking values in some measurable space $(\cX,\cB(\cX))$ such that the regular conditional densities of distributions (w.r.t. some measure $\mu$) $f_i(x_i,\tht| x_{i-1},\dots,x_0)$, $i\leq n$, $n\geq 1$ exist, $f_0(x_0,\tht)\equiv f_0(x_0)$, $\tht\in R^1$ is the parameter to be estimated. Denote $P_\tht$ corresponding distribution on $(\Om,\cF):=(\cX^\infty, \cB(\cX^\infty))$. Identify the process $X=(X_i)_{i\geq 0}$ with coordinate process and denote $\cF_0=\sg(X_0)$, $\cF_n=\sg$ $(X_i$, $i\leq n)$. If $\psi=\psi(X_i,X_{i-1},\dots,X_0)$ is a r.v., then under $E_\tht(\psi|\cF_{i-1})$ we mean the following version of conditional expectation
$$
    E_\tht(\psi \mid \cF_{i-1}):= \int \psi(z,X_{i-1},\dots,X_0)
        f_i(z,\tht \mid X_{i-1},\dots,X_0)\mu(dz),
$$
if the last integral exists.

Assume that the usual regularity conditions are satisfied and denote
$$
    \frac{\pa}{\pa\tht} \,f_i(x_i,\tht \mid x_{i-1},\dots,x_0) :=
        \dot{f}_i(x_i,\tht \mid x_{i-1},\dots,x_0),
$$
the maximum likelihood scores
$$
    l_i(\tht) :=\frac{\dot{f}_i}{f_i}\,(X_i,\tht \mid X_{i-1},\dots,X_0)
$$
and the empirical Fisher information
$$
    I_n(\tht):= \sum_{i=1}^n E_\tht(l_i^2(\tht) \mid \cF_{i-1}).
$$
Denote also
$$
    b_n(\tht,u):= E_\tht(l_n(\tht+u) \mid \cF_{n-1})
$$
and indicate that for each $\tht \in R^1$, $n\geq 1$
\begin{equation}\label{5.1}
    b_n(\tht,0)=0 \;\;(P_\tht\text{-}a.s.).
\end{equation}
Consider the following recursive procedure
$$
    \tht_n=\tht_{n-1} +I_n^{-1} (\tht_{n-1}) l_n(\tht_{n-1}), \quad
        \tht_0\in \cF_0.
$$
Fix $\tht$, denote $z_n=\tht_n-\tht$ and rewrite the last equation in the form
\begin{equation}\label{5.2}
\begin{aligned}
    & z_n=z_{n-1}+I_n^{-1} (\tht+z_{n-1}) b_n(\tht,z_{n-1}) +
        I_n^{-1}(\tht+z_{n-1}) \Dl m_n, \\
    & z_0 =\tht-\tht,
\end{aligned}
\end{equation}
where $\Dl m_n=\Dl m(n,z_{n-1})$ with $\Dl m(n,u)=l_n(\tht+u)-E_\tht(l_n(\tht+u) |\cF_{n-1})$.

Note that the algorithm (\ref{5.2}) is embedded in stochastic approximation scheme (\ref{3.1}) with
\begin{align*}
    H_n(u) & =I_n^{-1}(\tht+u)b_n(\tht,u)\in \cF_{n-1}, \quad \Dl K_n=1, \\
    \Dl M(n,u) & =I_n^{-1} (\tht+u) \Dl m(n,u).
\end{align*}

This example clearly shows the necessity  of consideration of random fields $H_n(u)$ and $M(n,u)$.

In Sharia \cite{20} the convergence $z_n\to 0$ $\Pas$ as $n\to \infty$ was proved under conditions equivalent to (A), (B) and (I) connected with standard representation (\ref{3.2})(1).

\begin{remark}\label{r5.1}
Let $\tht\in \Tht\sbs R^1$ where $\tht$ is open proper subset of $R^1$. It may be possible that the objects $l_n(\tht)$ and $I_n(\tht)$ are defined only on the set $\Tht$, but for each fixed $\tht\in \Tht$ the objects $H_n(u)$ and $M(n,u)$ are well-defined functions of variable $u$ on whole $R^1$. Then under conditions of Theorem \ref{t3.1} $\tht_n\to \tht$ $P_\tht$-$a.s.$ as $n\to \infty$ starting from arbitrary $\tht_0$. The example given below illustrates this situation. The same example illustrates also efficiency of the representation (\ref{3.3})(2).
\end{remark}

\subsubsection*{b) Galton--Watson Branching Process with Immigration}
Let the observable process be
$$
    X_i=\sum_{j=1}^{X_{i-1}} Y_{i,j}+1, \quad i=1,2,\dots,n; \quad X_0=1,
$$
$Y_{i,j}$ are i.i.d. random variables having the Poisson distribution with parameter $\tht$, $\tht>0$, to be estimated. If $\cF_i=\sg(X_j$, $j\leq i)$, then
$$
    P_\tht(X_i=m \mid \cF_{i-1}) = \frac{(\tht X_{i-1})^{m-1}}{(m-1)!}\,
        e^{-\tht X_{i-1}}, \quad i=1,2,\dots\,; \;\;\; m\geq 1.
$$
From this we have
$$
    l_i(\tht) =\frac{X_i-1-\tht X_{i-1}}{\tht}\,, \quad
    I_n(\tht)=\tht^{-1} \sum_{i=1}^n X_{i-1}.
$$
The recursive procedure has the form
\begin{equation}\label{5.3}
    \tht_n=\tht_{n-1} +\frac{X_n-1-\tht_{n-1}X_{n-1}}{\sum_{i=1}^n X_{i-1}}\,,
        \quad \tht_0\in \cF_0,
\end{equation}
and if, as usual $z_n=\tht_n-\tht$, then
\begin{equation}\label{5.4}
    z_n=z_{n-1} -\frac{z_{n-1}X_{n-1}}{\sum_{i=1}^n X_{i-1}} +
        \frac{\ve_n}{\sum_{i=1}^n X_{i-1}}\,,
\end{equation}
where $\ve_n=X_n-1-\tht X_n$ is a $P_\tht$-square integrable martingale-difference. In fact, $E_\tht(\ve_n \,|\,\cF_{n-1})=0$, $E_\tht(\ve_n^2 \,|\, \cF_{n-1})=\tht X_{n-1}$. In this case $H_n(u)=-uX_{n-1} /\sum\limits_{i=1}^n X_{i-1}$, $\Dl M(n,u)=\Dl m_n=\ve_n /\sum\limits_{i=1}^n X_{i-1}$, $\Dl K=1$ and so are well-defined on whole $R^1$.

Indicate now that the solution of Eq. (\ref{5.3}) coincides with MLE
$$
    \wh\tht_n =\frac{\sum_{i=1}^n (X_i-1)}{\sum_{i=1}^n X_{i-1}}
$$
and it is easy to see that $(\wh\tht_n)_{n\geq 1}$ is strongly consistent for all $\tht>0$.

Indeed,
$$
    \wh\tht_n-\tht =\frac{\sum_{i=1}^n \ve_i}{\sum_{i=1}^n X_{i-1}}
$$
and desirable follows from strong law of large numbers for martingales and well-known fact (see, e.g., \cite{4}) that for all $\tht>0$
\begin{equation}\label{5.5}
    \sum_{i=1}^\infty X_{i-1} =\infty \;\; (P_\tht\text{-}a.s.).
\end{equation}

Derive this result as the corollary of Theorem \ref{t3.1}.

Note at first that for each $\tht>0$ the conditions (A) and (B$'$) are satisfied. Indeed,

(A) $\ds \qquad \qquad H_n(u)u=\frac{-u^2X_{n-1}}{\sum_{i=1}^n X_{i-1}}<0$ \\
for all $u\neq 0$ $(X_i>0$, $i\geq 0)$;
\smallskip

(B$'$) $\ds \qquad \qquad \la m\ra_\infty =\tht \sum_{n=1}^\infty
    \frac{X_{n-1}}{(\sum_{i=1}^n X_{i-1})^2} <\infty,$ \\
thanks to (\ref{5.5}).

Now to illustrate the efficiency of group of conditions (II) let us consider two cases:

1) $0<\tht\leq 1$ and \;\; 2) $\tht$ is arbitrary, i.e. $\tht>0$.

In case 1) conditions (I) are satisfied. In fact, $|H_n(u)|\!=\!\Big(X_{n-1}\big/\sum\limits_{i=1}^n X_{i-1}\Big) |u|$ and $\sum\limits_{n=1}^\infty X_{n-1}^2 \big/ \Big(\sum\limits_{i=1}^n X_{i-1}\Big)^2<\infty$, $P_\tht$-$a.s.$ But if $\tht>1$ the last series diverges, so the condition (I)~(i) is not satisfied.

On the other hand, the proving of desirable convergence by checking  the conditions (II) is almost trivial. Really, use Remark \ref{r4.2} and take $\wt G_n=G_n=X_{n-1}/\sum\limits_{i=1}^n X_{i-1}$. Then $\sum\limits_{n=1}^\infty G_n=\infty$ $P_\tht$-$a.s.$, for all $\tht>0$. Besides $\dl_n=-2+\wt G_n<0$, $|\dl_n|\geq 1$.

\subsubsection{RM Algorithm with Deterministic Regression Function}

Consider the particular case of algorithm (\ref{3.1}) when $H_t(\om,u)=\gm_t(\om) R(u)$, where the process $\gm=(\gm_t)_{t\geq 0} \in \cP$, $\gm_t>0$ for all $t\geq 0$, $dM(t,u)=\gm_t dm_t$, $m\in \cM_{\loc}^2$, i.e.
$$
    dz_t=\gm_t R(z_{t-}) dK_t +\gm_t dm_t, \quad z_0\in \cF_0.
$$

a) Let the following conditions be satisfied:

(A) $R(0)=0$, $R(u)u<0$ for all $u\neq 0$,

(B$'$) $\gm^2\circ \la m\ra_\infty<\infty$,

(1) $|R(u)|\leq C(1+|u|)$, $C>0$ is constant,

(2) for each $\ve>0$, $\inf\limits_{\ve\leq u\leq \frac{1}{\ve}} |R(u)|>0$,

(3) $\gm\circ K_t<\infty$, $\fa t\geq 0$, $\gm\circ K_\infty=\infty$,

(4) $\gm^2 \Dl K\circ K_\infty^d <\infty$.

Then $z_t\to 0$ $P$-$a.s.$, as $t\to \infty$.

Indeed, it is easy to see that (A), (B$'$), (1)--(4)$\Ra$(A), (B) and (I) of Theorem \ref{t3.1}.

In Melnikov \cite{14} this result has been proved on the basis of the theorem on the semimartingale convergence sets noted in Remark \ref{r2.1}. In the case when $K^d\neq 0$ this automatically leads to the ``moment'' restrictions and the additional assumption $|R(u)|\leq const$.

b) Let, as in case a), conditions (A) and (B$'$) be satisfied. Besides assume that for each $u\in R^1$ and $t\in [0,\infty)$:
\smallskip

(1$'$) \qquad $ V_t^-(u)+V_t^+(u)\leq 0$,
\smallskip

(2$'$) \qquad for all $\ve>0$
$$
    I_\ve:= \inf_{\ve\leq u\leq \frac{1}{\ve}} \{-(V^-(u)+V^+(u))\} \circ K_\infty=\infty.
$$
\smallskip

Then $z_t\to 0$ $P$-$a.s.$, as $t\to \infty$.

Indeed, it is not hard to verify that (1$'$), (2$')\Ra$(II).

The following question arises: is it possible (1$'$) and (2$'$) to be satisfied? Suppose in addition that
\begin{equation}\label{5.6}
    C_1|u|\leq |R(u)| \leq C_2|u|, \;\;\; C_1,C_2 \;\;\text{are constants},
\end{equation}

(3$'$) \qquad $2-C_2\gm_t\Dl K_t\geq 0$,
\smallskip

(4$'$) \qquad $\gm(2-C_2\gm \Dl K)\circ K_\infty=\infty.$ \\[2mm]
Then $(3')\Ra(1')$ and $(4') \Ra(2')$.

Indeed,
\begin{gather*}
    V_t^-(u) +V_t^+(u)  \leq C_1 \gm_t|u|^2[-2+C_2\gm_t \Dl K_t]\leq 0, \\
    I_\ve  \geq C_1 \ve^2\{\gm(2-C_2\gm \Dl K)\circ K_\infty\}=\infty.
\end{gather*}

\begin{remark}\label{r5.2}
$(4')\Ra \gm\circ K_\infty=\infty$.
\end{remark}

In \cite{16} the convergence $z_t\to 0$ $P$-$a.s.$, as $t\to \infty$ was proved under the following conditions:

(A) $R(0)=0$, $R(u)u<0$ for all $u\neq 0$;

(M) there exists a non-negative predictable process $r=(r_t)_{t\geq 0}$ integrable w.r.t process $K=(K_t)_{t\geq 0}$ on any finite interval $[0,t]$ with properties:
\begin{enumerate}
\item[(a)] $r\circ K_\infty=\infty$,
\item[(b)] $A_\infty^1 =\gm^2 \cE^{-1} (-r\circ K)\circ \la m\ra_\infty<\infty$,
\item[(c)] all jumps of process $A^1$ are bounded,
\item[(d)] $r_tu^2+\gm_t^2 \Dl K_tR^2(u)\leq -2\gm_t R(u)u$, \\ for all $u\in R^1$ and $t\in [0,\infty)$.
\end{enumerate}
Show that (M)$\Ra$(B$'$), (1$'$) and (2$'$).

It is evident that (b)$\Ra$(B$'$). Further, (d)$\Ra$(1$'$), Finally, (2$'$) follows from (a) and (d) thanks to the relation
$$
    I_\ve :=\inf_{\ve\leq |u|\leq \frac{1}{\ve}} -(V^-(u)+V^+(u))\circ K_\infty \geq
        \ve^2r\circ K_\infty=\infty.
$$
The implication is proved.

In particular case when (\ref{5.6}) holds and for all $t\geq 0$ $\gm_t\Dl K_t\leq q$, $q>0$ is a constant and $C_1$ and $C_2$ in (\ref{5.6}) are chosen  such that $2C_1-qC_2^2>0$, if we take $r_t=b\gm_t$, $b>0$, with $b<2C_1-qC_2^2$, then (a) and (d) are satisfied if $\gm\circ K_\infty=\infty$.

But these conditions imply (3$'$) and (4$'$). In fact, on the one hand, $0<2C_1-qC_2^2\leq C_1(2-qC_2)$ and so (3$'$) follows, since $2-C_2\gm_t\Dl K_t\geq 2-qC_2>0$. On the other hand, (4$'$) follows from $\gm(2-C_2\gm \Dl K)\circ K_\infty\geq (2-qC_2)\gm \circ K_\infty=\infty$.

From the above we may conclude that if the conditions (A), (B$'$), (\ref{5.6}), $\gm_t\Dl K_t\leq q$, $q>0$, $2-qC_2>0$ and $\gm\circ K_\infty=\infty$ are satisfied, then the desirable convergence $z_t\to 0$ $P$-$a.s.$ takes place and so, the choosing of process $r=(r_t)_{t\geq 0}$ with properties (M) is unnecessary (cf. \cite{16}, Remark \ref{r3.3} and Subsection~\ref{s4}).

\subsubsection*{c) Linear Model $($see, e.g., \cite{14}$)$}
Consider the linear RM procedure
$$
    dz_t=b\gm_t z_{t-} dK_t+\gm_t dm_t, \quad z_0\in \cF,
$$
where $b\in B\sbseq (-\infty,0)$, $m\in \cM_{\loc}^2$.

Assume that
\begin{align}
    \gm^2\circ \la m\ra_\infty & <\infty, \label{5.7} \\
    \gm \circ K_\infty & =\infty, \label{5.8} \\
    \gm^2 \Dl K\circ K^d & <\infty. \notag
\end{align}
Then for each $b\in B$ the conditions (A), (B$'$) and (I) are satisfied. Hence
\begin{equation}\label{5.9}
    z_t\to 0 \;\;P\text{-}a.s., \;\;\; \text{as} \;\;\; t\to \infty.
\end{equation}
Now let (\ref{5.7}) and (\ref{5.8}) be satisfied, but $P(\gm^2 \Dl K\circ K^d=\infty)>0$.

At the same time assume that $B=[b_1,b_2]$, $-\infty<b_1\leq b_2<0$ and for all $t>0$ $\gm_t\Dl K_t <|b_1|^{-1}$.

Then for each $b\in B$ (\ref{5.9}) holds.

Indeed,
\begin{align*}
    [V_t^-(u) I_{\{\Dl K_t\neq 0\}} +V_t^+(u)]^+ & = |b| \gm_tu^2
        [-2+|b| \gm_t\Dl K_t I_{\{\Dl K_t\neq 0\}}]^+ \\
    & \leq  I_{\{\Dl K_t\neq 0\}} |b| \gm_tu^2[-2+|b| \gm_t\Dl K_t]^+=0
\end{align*}
and therefore (II)~(i) is satisfied.

On the other hand,
\begin{multline*}
    \inf_{\ve\leq |u| \leq \frac{1}{\ve}} u^2 \{2\gm|b| I_{\{\Dl K\neq 0\}}+
        b\gm[2-|b|\gm \Dl K] I_{\{\Dl K\neq 0\}} \} \circ K_\infty \\
    \geq \ve^2 |b| \gm [2-|b|\gm \Dl K]\circ K_\infty \geq
        \ve^2 |b| \gm \circ K_\infty=\infty.
\end{multline*}
So (II)~(ii) is satisfied also.


\bigskip
\section{Rate of Convergence and Asymptotic Expansion}
\subsection{Notation and preliminaries}\label{2-s1}
We consider the RM type stochastic differential equation (SDE)
\begin{equation}\label{2-1.1}
    z_t=z_0+\int_0^t H_s(z_{s-}) dK_s +\int_0^t M(ds, z_{s-}).
\end{equation}


As usual, we assume that there exists a unique strong solution $z=(z_t)_{t\geq 0}$ of Eq. (\ref{2-1.1}) on the whole time interval $[0,\infty[$ and $\wt M=(\wt M_t)_{t\geq 0}\in \cM_{\loc}^2(P)$, where $\wt M =\int\limits_0^t M(ds, z_{s-})$ (see \cite{2}, \cite{3}, \cite{7}).

Let us denote
$$
    \bt_t=-\lim_{u\to 0} \frac{H_t(u)}{u}
$$
assuming that this limit exists and is finite for each $t\geq 0$ and define the random field
$$
    \bt_t(u) =\begin{cases}
                -\frac{H_t(u)}{u} & \text{if} \;\; u\neq 0, \\
                \;\;\;\bt_t & \text{if} \;\; u=0.
            \end{cases}
$$

It follows from (A) that for all $t\geq 0$ and $u\in R^1$,
$$
    \bt_t\geq 0 \quad \text{and} \quad \bt_t(u)\geq 0 \;\;(\Pas).
$$
Further, rewrite Eq. (\ref{2-1.1}) as
\begin{align*}
    z_t=z_0 & - \int_0^t  \bt_s z_{s-} I_{\{\bt_s \Dl K_s\neq 1\}} dK_s +
        \int_0^t M(ds,0)-\sum_{s\leq t} z_{s-}I_{\{\bt_s\Dl K_s=1\}} \\
    & + \int_0^t (\bt_s-\bt_s(z_{s-})) z_{s-} dK_s +
        \int_0^t (M(ds,z_{s-})-M(ds,0))
\end{align*}
(we suppose that $M(\cdot,0)\not\equiv 0$).

Denote
\begin{gather*}
    \ol{\bt}_t =\bt_t I_{\{\bt_t \Dl K_t\neq 1\}}, \;\;\;
        \ol{R}{}_t^{(1)} =-\sum_{s\leq t} z_{s-} I_{\{\bt_s \Dl K_s=1\}}, \\
    \ol{R}{}_t^{(2)}=\int_0^t (\bt_s-\bt_s(z_{s-}))z_{s-} dK_s, \;\;\;
        \ol{R}{}_t^{(3)} =\int_0^t (M(ds,z_{s-})-M(ds,0)).
\end{gather*}

In this notation,
$$
    z_t=z_0-\int_0^t \ol{\bt}_s z_{s-} dK_s +
        \int_0^t M(ds,0)+\ol{R}_t,
$$
where
\begin{gather*}
    \ol{R}_t=\ol{R}{}_t^{(1)}+\ol{R}{}_t^{(2)}+\ol{R}{}_t^{(3)}.
\end{gather*}

Solving this equation w.r.t $z$ yields
\begin{equation}\label{2-1.2}
    z_t=\Gm_t^{-1} \bigg( z_0+\int_0^t \Gm_s M(ds,0) +
        \int_0^t \Gm_s d \ol{R}_s\bigg),
\end{equation}
where
$$
    \Gm_t=\ve_t^{-1} (-\ol{\bt}\circ K).
$$

Here, $\al\circ K_t=\int\limits_0^t \al_s dK_s$ and $\ve_t(A)$ is the Dolean exponent.

The process $\Gm=(\Gm_t)_{t\geq 0}$ is predictable (but not positive in general) and therefore, the process $L=(L_t)_{t\geq 0}$ defined by
$$
    L_t=\int_0^t \Gm_s M(ds,0)
$$
belongs to the class $\cM_{\loc}^2(P)$. It follows from Eq. (\ref{2-1.2}) that
$$
    \chi_t z_t=\frac{L_t}{\la L_t\ra_t^{1/2}} +R_t,
$$
where
\begin{gather*}
    \chi_t=\Gm_t \la L\ra_t^{-1/2}, \\
    R_t =\frac{z_0}{\la L\ra _t^{1/2}} +\frac{1}{\la L\ra_t^{1/2}}
        \int_0^t \Gm_s d\ol{R}_s
\end{gather*}
and $\la L\ra$ is the shifted square characteristic of $L$, i.e. $\la L\ra_t:=1+\la L\ra_t^{F,P}$.

This section is organized  as follows. In subsection \ref{2-s2} assuming $z_t\to 0$ as $t\to \infty$ $(\Pas)$, we give various sufficient conditions to ensure the convergence
\begin{equation}\label{2-1.3}
    \gm_t^\dl z_t^2 \to 0 \quad \text{as} \quad t\to \infty \;\; (\Pas)
\end{equation}
for all $0\leq \dl\leq \dl_0$, where $\gm=(\gm_t)_{t\geq 0}$ is a predictable increasing process and $\dl_0$, $0\leq \dl_0\leq 1$, is some constant. There we also give series of examples illustrating these results.

In subsection \ref{2-s3} assuming that Eq. (\ref{2-1.3}) holds with $\gm$ asymptotically equivalent to $\chi^2$ (see the definition in subsection \ref{2-s2}, we study sufficient conditions to ensure the convergence
$$
    R_t \os{P}{\to} 0 \quad \text{as} \quad t\to \infty,
$$
which implies the local asymptotic linearity of the solution. 


{\it We say that the process $\xi=(\xi_t)_{t\geq 0}$ has some property eventually if for every $\om$ in a set $\Om_0$ of $P$ probability $1$, the trajectory $(\xi_t(\om))_{t\geq 0}$ of the process has this property on the set $[t_0(\om),\infty)$ for some $t_0(\om)<\infty$. }

Everywhere in this section we assume that $z_t\to 0$ as $t\to \infty$ $(\Pas)$.

\subsection{Rate of convergence}\label{2-s2}

Throughout subsection \ref{2-s2} we assume that $\gm=(\gm_t)_{t\geq 0}$ is a predictable increasing process such that $(\Pas)$
$$
    \gm_0=1, \quad \gm_\infty=\infty.
$$

Suppose also that for each $u\in \bR^1$ the processes $\la M(u)\ra$ and $\gm$ are locally absolutely continuous w.r.t. the process $K$ and denote
$$
    h_t(u,v)=\frac{d\la M(u),M(v)\ra_t}{d K_t} \quad \text{and} \quad
        g_t =\frac{d\gm_t}{d K_t}
$$
assuming for simplicity that $g_t>0$ and hence, $I_{\{\Dl K_t\neq 0\}}=I_{\{\Dl \gm_t\neq 0\}}$ $(\Pas)$ for all $t>0$.

In this subsection, we study the problem of the convergence
$$
    \gm_t^\dl z_t\to 0 \quad \text{as} \quad t\to \infty \;\; (\Pas)
$$
for all $\dl$, $0<\dl<\dl_0/2$, $0<\dl_0\leq 1$.

It should be stressed that the consideration of the two control parameters $\dl$ and $\dl_0$ substantially simplifies application of the results and also clarifies their relation with the classical ones (see Examples 1 and 6).

We shall consider two approaches to this problem. The first approach is based on the results on the convergence sets of non-negative semimartingales and on the so-called ``non-standard representations''.

The second approach presented exploits the stochastic version of the Kronecker Lemma. This approach is employed in \cite{20} for the discrete time case under the assumption (\ref{2-2.20}). The comparison of the results obtained in this section with those obtained before is also presented.

Note also that the two approaches give different sets of conditions in general. This fact is illustrated by the various examples.

Let us formulate some auxiliary results based on the convergence sets.

Suppose that $r=(r_t)_{t\geq 0}$ is a non-negative predictable process such that
$$
    r_t \Dl K_t<0, \quad r\circ K_t <\infty \;\; (\Pas)
$$
for each $t>0$ and
$$
    r\circ K_\infty=\infty \;\; (\Pas).
$$

Denote by $\ve_t=\ve_t(-r\circ K)$ the Dolean exponential, i.e.
$$
    \ve_t=e^{-\int_0^t r_s d K_s^c} \prod_{s\leq t} (1-r_s \Dl K_s).
$$

Then, as it is well known (see \cite{proc12}, \cite{14}), the process $\ve_t^{-1} =\{\ve_t(-r\circ K)\}^{-1}$ is the solution of the linear SDE
$$
    \ve_t^{-1} =\ve_t^{-1} r_t dK_t, \quad \ve_0^{-1}=1
$$
and $\ve_t^{-1}\to \infty $ as $t\to \infty$ ($\Pas$).

\begin{proposition}\label{2-p2.1}
Suppose that
\begin{equation}\label{2-2.1}
    \int_0^\infty \ve_t^{-1} \ve_{t-} [r_t-2\bt_t(z_{t-}) +
        \bt_t^2 (z_{t-})\Dl K_t]^+ dK_t<\infty \;\; (\Pas)
\end{equation}
and
\begin{equation}\label{2-2.2}
    \int_0^\infty \ve_t^{-1} h_t(z_{t-},z_{t-}) dK_t<\infty \;\; (\Pas),
\end{equation}
where $[x]^+$ denotes the positive part of $x$.

Then $\ve^{-1} z^2 \to (\Pas)$ $($the notation $X\to$ means that $X=(X_t)_{t\geq 0}$ has a finite limit as $t\to \infty)$.
\end{proposition}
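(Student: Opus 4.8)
The plan is to apply the semimartingale convergence set results of Subsection~\ref{s2} --- specifically Corollary~\ref{c2.1} (together with Theorem~\ref{t2.1}) --- to the nonnegative special semimartingale $X_t:=\ve_t^{-1}z_t^2$ (note $\ve_t>0$, so $X\ge 0$ and $1+X_{t-}>0$). First I would obtain the canonical decomposition of $X$. Since $\ve^{-1}$ is a predictable finite-variation process with $\Dl\ve_s^{-1}=\ve_s^{-1}r_s\Dl K_s$ and $\ve_{s-}^{-1}=\ve_s^{-1}(1-r_s\Dl K_s)$, the jump of $X$ is
\begin{equation*}
    \Dl X_s=\ve_s^{-1}\big[\Dl(z^2)_s+r_s\Dl K_s\,z_{s-}^2\big].
\end{equation*}
Computing the predictable compensator of this jump together with the continuous drift --- using $\Dl z_s=H_s(z_{s-})\Dl K_s+\Dl\wt M_s$, $H_t(u)=-\bt_t(u)u$, and the compensation of $(\Dl\wt M_s)^2$ into $d\la\wt M\ra_s=h_s(z_{s-},z_{s-})\,dK_s$ --- yields the drift
\begin{equation*}
    A_t=\int_0^t \ve_s^{-1}z_{s-}^2\,\vf_s\,dK_s+\int_0^t \ve_s^{-1}h_s(z_{s-},z_{s-})\,dK_s,\qquad
    \vf_s:=r_s-2\bt_s(z_{s-})+\bt_s^2(z_{s-})\Dl K_s,
\end{equation*}
the local martingale part of $X$ being $\int_0^{\cdot}\ve_{s-}^{-1}\,dN_s$ plus the compensated jump cross-terms. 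The main computational point is that the $\ve^{-1}$-jump contribution $r_s\Dl K_s z_{s-}^2$ combines with $V_s^+(z_{s-})=\bt_s^2(z_{s-})z_{s-}^2\Dl K_s$ to produce exactly the coefficient $\vf_s$; evaluating $\Dl X_s$ directly, as above, is the cleanest way to see this and avoids double-counting the jump cross-terms.

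Next I would split $A=A^1-A^2$ inside $\cV^+\cap\cP$ by setting
\begin{align*}
    A_t^1 &=\int_0^t \ve_s^{-1}z_{s-}^2\,[\vf_s]^+\,dK_s
        +\int_0^t \ve_s^{-1}h_s(z_{s-},z_{s-})\,dK_s,\\
    A_t^2 &=\int_0^t \ve_s^{-1}z_{s-}^2\,[\vf_s]^-\,dK_s,
\end{align*}
so that $A^1$ carries precisely the data appearing in (\ref{2-2.1}) and (\ref{2-2.2}). By Corollary~\ref{c2.1} applied to $X$,
\begin{equation*}
    \{A_\infty^1<\infty\}=\{(1+X_-)^{-1}\circ A_\infty^1<\infty\}\sbseq \{X\to\}\cap\{A_\infty^2<\infty\}\;\;(\Pas),
\end{equation*}
so it suffices to prove $(1+X_-)^{-1}\circ A_\infty^1<\infty$ $(\Pas)$.

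The key observation, which explains the weight $\ve_t^{-1}\ve_{t-}$ occurring in (\ref{2-2.1}), is the elementary bound
\begin{equation*}
    (1+X_{s-})^{-1}\ve_s^{-1}z_{s-}^2
        =\ve_s^{-1}\ve_{s-}\,\frac{z_{s-}^2}{\ve_{s-}+z_{s-}^2}\le \ve_s^{-1}\ve_{s-},
\end{equation*}
obtained by factoring $1+X_{s-}=\ve_{s-}^{-1}(\ve_{s-}+z_{s-}^2)$ and using $\ve_{s-}>0$; combined with $(1+X_{s-})^{-1}\le 1$ in the second integral this gives
\begin{equation*}
    (1+X_-)^{-1}\circ A_\infty^1\le \int_0^\infty \ve_s^{-1}\ve_{s-}\,[\vf_s]^+\,dK_s
        +\int_0^\infty \ve_s^{-1}h_s(z_{s-},z_{s-})\,dK_s.
\end{equation*}
By (\ref{2-2.1}) and (\ref{2-2.2}) the right-hand side is finite $(\Pas)$, whence $\{X\to\}=\Om$ $(\Pas)$, i.e. $\ve^{-1}z^2\to$. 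I expect the drift computation of the first paragraph to be the principal obstacle: once the coefficient $\vf_s$ is correctly identified, the decisive structural step is to pass to the weighted process $(1+X_-)^{-1}\circ A^1$ rather than to $A^1$ itself, since it is the former --- through the displayed bound $z_{s-}^2/(\ve_{s-}+z_{s-}^2)\le 1$ --- that matches the hypotheses (\ref{2-2.1}) and (\ref{2-2.2}) exactly, with no surviving dependence on $z_{s-}^2$.
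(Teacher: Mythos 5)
Your proposal is correct and follows essentially the same route as the paper: the same Itô computation identifying the drift coefficient $\vf_s=r_s-2\bt_s(z_{s-})+\bt_s^2(z_{s-})\Dl K_s$ for $X=\ve^{-1}z^2$, followed by the semimartingale convergence-set machinery of subsection 1.1. The only (immaterial) difference is that the paper routes the $z_{s-}^2[\vf_s]^+$ term through the $X_-\circ B$ part of Corollary \ref{c2.4}, whereas you keep it inside $A^1$ and use the $(1+X_-)^{-1}$ weighting of Corollary \ref{c2.1} — your bound $(1+X_{s-})^{-1}\ve_s^{-1}z_{s-}^2\le\ve_s^{-1}\ve_{s-}$ is precisely the mechanism that makes the two formulations coincide.
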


\begin{proof}
Using the Ito formula,
\begin{align*}
    d(\ve_t^{-1} z_t^2) & =z_{t-}^2 d\ve_t^{-1} +\ve_t^{-1} dz_t^2 \\
    & =\ve_t^{-1} z_{t-}^2 (r_t-2\bt_t(z_{t-})+\bt_t^2(z_{t-}) \Dl K_t) dK_t \\
    & \quad + \ve_t^{-1} h_t(z_{t-},z_{t-}) dK_t +d(\text{Mart}) \\
    & =\ve_t^{-1} z_{t-}^2 dB_t+d A_t^1-d A_t^2+d(\text{Mart}),
\end{align*}
where
\begin{align*}
    dB_t & =\ve_t^{-1} \ve_{t-} \left[ r_t-2\bt_t(z_{t-})+\bt_t^2(z_{t-})
        \Dl K_t\right]^+ d K_t, \\
    dA_t^1 & =\ve_t^{-1} h_t (z_{t-},z_{t-}) dK_t, \\
    dA_t^2 & =\ve_t^{-1} \ve_{t-} \left[ r_t-2\bt_t(z_{t-})+\bt_t^2(z_{t-})
        \Dl K_t\right]^- d K_t.
\end{align*}

Now, applying Corollary \ref{c2.4} to the non-negative semimartingale $(\ve_t^{-1} z_t^2)_{t\geq 0}$, we obtain
$$
    \{B_\infty<\infty\}\cap \{A_\infty^1<\infty\} \sbseq \{\ve^{-1} z^2\to\} \cap
        \{A_\infty^2<\infty\}
$$
and the result follows from Eqs. (\ref{2-2.1}) and (\ref{2-2.2}).
\end{proof}

The following lemma is an immediate consequence of the Ito formula applying to the process $(\gm_t^\dl)_{t\geq 0}$, $0<\dl<1$.

\begin{lemma}\label{2-l2.1}
Suppose that $0<\dl<1$. Then
$$
    \gm_t^\dl=\ve_t^{-1} (-r^\dl \circ K),
$$
where
$$
    r_t^\dl =\ol{r}{}_t^\dl g_t /\gm_t
$$
and
$$
    \ol{r}_t^\dl =\dl I_{\{\Dl \gm_t=0\}} +
        \frac{1-(1-\Dl \gm_t/\gm_t)^\dl}{\Dl \gm_t/\gm_t} \,
        I_{\{\Dl \gm_t\neq 0\}}.
$$
\end{lemma}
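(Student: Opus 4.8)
The plan is to exploit the positivity and finite-variation structure of $\gm$ and simply recognize the claimed identity as a Dolean equation. Since $\gm$ is increasing with $\gm_0=1$ it is strictly positive, and since $\la\gm\ra$ is locally absolutely continuous w.r.t.\ $K$ we have $d\gm_t=g_t\,dK_t$, so $\gm$ is a process of finite variation with no continuous martingale part. Rather than analyze $\gm^\dl$ directly, it is cleaner to prove the equivalent statement $\gm_t^{-\dl}=\ve_t(-r^\dl\circ K)$, i.e.\ that $Y_t:=\gm_t^{-\dl}$ solves the linear Dolean equation
$$
    dY_t=-Y_{t-}\,r_t^\dl\,dK_t,\qquad Y_0=1 .
$$
Uniqueness of the solution of this equation (it is an ordinary pathwise linear integral equation here) then yields $\gm^{-\dl}=\ve(-r^\dl\circ K)$, and inverting gives $\gm^\dl=\ve^{-1}(-r^\dl\circ K)$, which is exactly the assertion.

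To obtain the equation for $Y$, I would apply the Ito formula to the $C^2$ function $x\mapsto x^{-\dl}$ along $\gm$. For a finite-variation process the Ito formula collapses to
$$
    \gm_t^{-\dl}=1-\int_0^t \dl\,\gm_{s-}^{-\dl-1} g_s\,dK_s^c
        +\sum_{s\leq t}\big(\gm_s^{-\dl}-\gm_{s-}^{-\dl}\big),
$$
the quadratic term being absent. For the continuous part one uses that on $\{\Dl\gm_s=0\}$ the coefficient $\ol r_s^\dl$ equals $\dl$, so $r_s^\dl=\dl\,g_s/\gm_s$ and hence $-\dl\,\gm_{s-}^{-\dl-1}g_s=-\gm_{s-}^{-\dl}\,r_s^\dl$ there; this matches the continuous drift $-Y_{t-}r_t^\dl\,dK_t^c$.

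The heart of the matter is the jump term, and this is the one algebraic identity I would check carefully. On $\{\Dl\gm_s\neq 0\}$ the definitions give
$$
    r_s^\dl\,\Dl K_s=\ol r_s^\dl\,\frac{g_s\,\Dl K_s}{\gm_s}
        =\ol r_s^\dl\,\frac{\Dl\gm_s}{\gm_s}
        =1-\Big(1-\frac{\Dl\gm_s}{\gm_s}\Big)^{\dl},
$$
using $\Dl\gm_s=g_s\,\Dl K_s$ and the explicit form of $\ol r_s^\dl$. Since $1-\Dl\gm_s/\gm_s=\gm_{s-}/\gm_s$, this says precisely $1-r_s^\dl\,\Dl K_s=(\gm_{s-}/\gm_s)^{\dl}$, whence $\gm_s^{-\dl}-\gm_{s-}^{-\dl}=\gm_{s-}^{-\dl}\big[(1-r_s^\dl\Dl K_s)-1\big]=-\gm_{s-}^{-\dl}\,r_s^\dl\,\Dl K_s$. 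Adding the continuous and jump contributions reconstitutes $-\int_0^t Y_{s-}r_s^\dl\,dK_s$, as required. The only points needing a word of care are that $\Dl\gm_s\geq 0$ and $\gm_s>0$ guarantee $(1-\Dl\gm_s/\gm_s)^{\dl}\in(0,1]$, so all powers and the factor $1-r_s^\dl\Dl K_s$ are well defined and positive, and that the standing assumption $g_t>0$ gives $I_{\{\Dl K_t\neq0\}}=I_{\{\Dl\gm_t\neq0\}}$ so that the two indicator cases in $\ol r^\dl$ partition the time axis correctly. I expect no genuine obstacle beyond this bookkeeping: once the jump identity is in hand the result is, as claimed, an immediate consequence of the Ito formula.
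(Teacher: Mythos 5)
Your proposal is correct and follows essentially the route the paper intends: the paper states the lemma as ``an immediate consequence of the Ito formula applied to $(\gm_t^\dl)_{t\geq 0}$,'' and your argument is precisely that computation, merely carried out for $\gm^{-\dl}$ (so that one lands on the forward Dolean equation $dY_t=-Y_{t-}r_t^\dl\,dK_t$ rather than the equation for the reciprocal exponential), with the same key jump identity $1-r_s^\dl\Dl K_s=(\gm_{s-}/\gm_s)^\dl$. The bookkeeping points you flag ($\Dl\gm_s=g_s\Dl K_s$, $0<\gm_{s-}/\gm_s\leq 1$, and $I_{\{\Dl K\neq 0\}}=I_{\{\Dl\gm\neq 0\}}$) are exactly the ones needed, so the proof is complete.
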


The following theorem is the main result based on the first approach.

\begin{theorem}\label{2-t2.1}
Suppose that for each $\dl$, $0<\dl<\dl_0$, $0<\dl_0\leq 1$,
\begin{equation}\label{2-2.3}
    \int_0^\infty \left( \frac{\gm_{t-}}{\gm_t}\right)^{-\dl}
        [r_t^\dl -2\bt_t(z_{t-})+\bt_t^2(z_{t-})\Dl K_t]^+ dK_t<\infty \;\; (\Pas)
\end{equation}
and
\begin{equation}\label{2-2.4}
    \int_0^\infty \gm_t^\dl h_t(z_{t-},z_{t-}) dK_t<\infty \;\; (\Pas).
\end{equation}

Then $\gm_t^\dl z_t^2 \to 0$ as $t\to \infty$ $(\Pas)$ for each $\dl$, $0<\dl<\dl_0$, $0<\dl_0\leq 1$.
\end{theorem}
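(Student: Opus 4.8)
The plan is to reduce the assertion to Proposition~\ref{2-p2.1} by means of the identity furnished by Lemma~\ref{2-l2.1}, and then to upgrade the resulting finite-limit conclusion to convergence to zero by exploiting the freedom built into the exponent range $(0,\dl_0)$. First I would fix $\dl$ with $0<\dl<\dl_0$ and, since $0<\dl<1$, invoke Lemma~\ref{2-l2.1} to write $\gm_t^\dl=\ve_t^{-1}(-r^\dl\circ K)$. Thus $\gm^\dl$ occupies exactly the role played by $\ve^{-1}$ in Proposition~\ref{2-p2.1} under the choice $r=r^\dl$. With this identification $\ve_t=\gm_t^{-\dl}$, so that $\ve_t^{-1}=\gm_t^\dl$ and $\ve_t^{-1}\ve_{t-}=\gm_t^\dl\gm_{t-}^{-\dl}=(\gm_{t-}/\gm_t)^{-\dl}$.

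Substituting these expressions into the two hypotheses (\ref{2-2.1}) and (\ref{2-2.2}) of Proposition~\ref{2-p2.1} turns them verbatim into the theorem's conditions (\ref{2-2.3}) and (\ref{2-2.4}). Since the latter are assumed to hold for \emph{every} exponent in $(0,\dl_0)$, the Proposition applies with $r=r^\dl$ and yields that $\gm_t^\dl z_t^2$ possesses a finite limit $(\Pas)$ as $t\to\infty$, for each such $\dl$. This is the content of the first approach based on the convergence sets: finiteness of the limit, not yet its vanishing.

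The crucial remaining step is to pass from a finite limit to the limit $0$, and here I would use the two control parameters essentially. Given $\dl\in(0,\dl_0)$, choose an intermediate exponent $\dl'$ with $\dl<\dl'<\dl_0$; such a $\dl'$ exists precisely because the inequality $\dl<\dl_0$ is strict. Applying the previous step with $\dl'$ in place of $\dl$ (legitimate, as $\dl'<\dl_0\le 1$ so Lemma~\ref{2-l2.1} is still available), the process $\gm_t^{\dl'}z_t^2$ converges to a finite limit $(\Pas)$. Writing
$$
    \gm_t^\dl z_t^2=\gm_t^{\dl-\dl'}\,\gm_t^{\dl'}z_t^2
$$
and recalling $\gm_\infty=\infty$, the factor $\gm_t^{\dl-\dl'}\to 0$ because $\dl-\dl'<0$, while $\gm_t^{\dl'}z_t^2$ remains bounded as a convergent process. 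Hence $\gm_t^\dl z_t^2\to 0$ $(\Pas)$, as claimed.

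I expect the only genuine obstacle to be the bookkeeping in the reduction: one must check that the choice $r=r^\dl$ meets the standing requirements imposed on $r$ in the setup preceding Proposition~\ref{2-p2.1}. Concretely, $r^\dl$ must be nonnegative and satisfy $r_t^\dl\Dl K_t<1$, and one needs $r^\dl\circ K_\infty=\infty$ together with local integrability. The first two follow from the explicit formula of Lemma~\ref{2-l2.1}, since $\bar r_t^\dl\ge 0$ and $r_t^\dl\Dl K_t=1-(1-\Dl\gm_t/\gm_t)^\dl\in[0,1)$ (using $0\le\Dl\gm_t/\gm_t<1$, which holds because $\gm_{t-}>0$); the integrability conditions follow from $\gm_t^\dl=\ve_t^{-1}(-r^\dl\circ K)$ being a finite-valued increasing process with $\gm_\infty^\dl=\infty$. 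Once these verifications are in place, the convergence-to-zero argument is immediate, which is exactly why the hypotheses are required on the open range $(0,\dl_0)$ rather than at a single exponent.
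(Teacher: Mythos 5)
Your argument is correct and follows essentially the same route as the paper: Lemma~\ref{2-l2.1} identifies $\gm^\dl$ with $\ve^{-1}(-r^\dl\circ K)$, Proposition~\ref{2-p2.1} applied with $r=r^\dl$ gives $P\{\gm^\dl z^2\to\}=1$ for every $\dl\in(0,\dl_0)$, and the passage from finite limits to the limit $0$ is exactly the implication the paper invokes, which your intermediate-exponent factorization $\gm_t^\dl z_t^2=\gm_t^{\dl-\dl'}\gm_t^{\dl'}z_t^2$ proves. Your additional check that $r^\dl$ meets the standing hypotheses on $r$ (in particular $r_t^\dl\Dl K_t=1-(1-\Dl\gm_t/\gm_t)^\dl\in[0,1)$) is a worthwhile detail the paper leaves implicit.
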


\begin{proof}
It follows from Proposition \ref{2-p2.1}, Lemma \ref{2-l2.1} and the conditions (\ref{2-2.3}) and (\ref{2-2.4}) that
$$
    P\{\gm^\dl z^2\to\}=1
$$
for all $\dl$, $0<\dl<\dl_0$, $0<\dl_0\leq 1$. Now the result follows since
\[
    \{\gm^\dl z^2 \to \;\;\text{for all} \;\; \dl, \; 0<\dl<\dl_0\} \Ra
    \{\gm^\dl z^2 \to 0 \;\;\text{for all} \;\; \dl, \; 0<\dl<\dl_0\}. \qedhere
\]
\end{proof}

\begin{remark}\label{2-r2.1}
Note that if Eq. (\ref{2-2.3}) holds for $\dl=\dl_0$, than it holds for all $\dl\leq \dl_0$.
\end{remark}

Some simple conditions ensuring Eq. (\ref{2-2.3}) are given in the following corollaries.

\begin{corollary}\label{2-c2.1}
Suppose that the process
\begin{equation}\label{2-2.5}
    \frac{\gm}{\gm_-} \;\; \text{is eventually bounded.}
\end{equation}

Then for each $\dl$, $0<\dl<\dl_0$, $0<\dl_0\leq 1$,
\begin{align*}
    & \bigg\{ \bigg[ (\dl I_{\{\Dl \gm=0\}} +I_{\{\Dl \gm\neq 0\}} \,
        \frac{g}{\gm} -2\bt(z_-)+\bt^2(z_-) \Dl K\bigg]^+
        \circ K_\infty<\infty \bigg\} \\
    & \quad \sbseq \bigg\{ \bigg[ \bigg(\dl+(1-\dl)\,\frac{\Dl\gm}{\gm}\bigg)
        \frac{g}{\gm} -2\bt(z_-)+\bt^2(z_-)\Dl K\bigg]^+\circ K_\infty<\infty \bigg\}\\
    & \quad \sbseq \bigg\{ \bigg( \frac{\gm_-}{\gm}\bigg)^{-\dl} [r^\dl-2\bt(z_-) +
        \bt^2(z_-) \Dl K]^+ \circ K_\infty<\infty \bigg\}.
\end{align*}
\end{corollary}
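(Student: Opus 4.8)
The plan is to reduce the chain of inclusions to eventual pointwise comparisons of the two nonnegative $K$-integrands, relying on two elementary observations. First, for a nonnegative predictable integrand $f$ the event $\{f\circ K_\infty<\infty\}$ is a tail event: if $0\le f\le cg$ holds eventually (for $t\ge t_0(\om)$) and $g\circ K_\infty<\infty$, then $f\circ K_\infty<\infty$, since $f\circ K_{t_0}<\infty$ always. Second, for every fixed $c$ the map $a\mapsto[a+c]^+$ is nondecreasing; hence it suffices to compare the coefficients that multiply $g/\gm$ inside the brackets, the additive term $-2\bt(z_-)+\bt^2(z_-)\Dl K$ being common to all three expressions.

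\emph{First inclusion.} Here only a trivial bound is needed. Since $\gm$ is increasing with $\gm_0=1$, one has $\gm_-\ge1$ and thus $\Dl\gm/\gm<1$ on $\{\Dl\gm\neq0\}$, so $\dl+(1-\dl)\frac{\Dl\gm}{\gm}<1$ there, while on $\{\Dl\gm=0\}$ the two coefficients coincide (both equal $\dl$). Consequently the coefficient in the second set never exceeds the coefficient $\dl I_{\{\Dl\gm=0\}}+I_{\{\Dl\gm\neq0\}}$ of the first, both multiplied by $g/\gm\ge0$; by the monotonicity observation the second integrand is dominated by the first pointwise, which gives the inclusion.

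\emph{Second inclusion} (the main content). Recall from Lemma \ref{2-l2.1} that $r^\dl=\ol{r}^\dl\, g/\gm$, with $\ol{r}^\dl=\dl$ on $\{\Dl\gm=0\}$ and $\ol{r}^\dl=\bigl(1-(1-\Dl\gm/\gm)^\dl\bigr)\big/(\Dl\gm/\gm)$ on $\{\Dl\gm\neq0\}$. The key analytic step is
\[
    \ol{r}^\dl\le \dl+(1-\dl)\,\frac{\Dl\gm}{\gm}.
\]
Putting $y=\gm_-/\gm=1-\Dl\gm/\gm\in(0,1)$ and clearing the positive denominator, this is equivalent to $\psi(y):=y^{1-\dl}\bigl[(2-\dl)-(1-\dl)y\bigr]\le1$; since $\psi(1)=1$ and $\psi'(y)=(1-\dl)(2-\dl)\,y^{-\dl}(1-y)\ge0$, the function $\psi$ increases to its value $1$ at $y=1$, proving the claim. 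Hence $r^\dl\le\bigl(\dl+(1-\dl)\frac{\Dl\gm}{\gm}\bigr)\frac{g}{\gm}$, and by the monotonicity observation the positive part inside the third set is bounded by the positive part inside the second. Finally the outer factor $(\gm_-/\gm)^{-\dl}=(\gm/\gm_-)^\dl$ is controlled by hypothesis (\ref{2-2.5}): eventual boundedness of $\gm/\gm_-$ by a constant $c$ gives $(\gm/\gm_-)^\dl\le c^\dl$ eventually, so the third integrand is eventually at most $c^\dl$ times the second, and the inclusion follows from the tail observation.

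\emph{Main obstacle.} The delicacy is that one cannot fold the outer factor into a single pointwise coefficient bound: the raw inequality $(\gm/\gm_-)^\dl\,\ol{r}^\dl\le\dl+(1-\dl)\Dl\gm/\gm$ is \emph{false} for large jumps (e.g.\ $\dl=1/2$, $\Dl\gm/\gm=1/2$). This is precisely why the factor $(\gm_-/\gm)^{-\dl}$ is kept outside the positive part in (\ref{2-2.3}) and handled separately through the eventual boundedness hypothesis (\ref{2-2.5}). Thus the essential points are the jump inequality $\psi\le1$, established by the one-line monotonicity computation above, together with the correct bookkeeping that separates this pointwise bound from the outer growth factor.
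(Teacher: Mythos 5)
Your proof is correct and follows essentially the same route as the paper: the paper's proof rests on the chain $1-(1-x)^\dl\leq \dl x+(1-\dl)x^2\leq x$ for $0<x<1$ (equivalently $\ol{r}{}^\dl\leq \dl+(1-\dl)\Dl\gm/\gm\leq \dl I_{\{\Dl\gm=0\}}+I_{\{\Dl\gm\neq 0\}}$) followed by an appeal to the eventual boundedness of $\gm/\gm_-$, exactly as you do. The only difference is that you supply an explicit verification of the first inequality (via the monotone function $\psi$), which the paper leaves as a ``simple inequality''.
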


\begin{proof}
The proof immediately follows from the following simple inequalities
$$
    1-(1-x)^\dl\leq \dl x+(1-\dl)x^2\leq x
$$
if $0<x<1$ and $0<\dl<1$, which taking $x=\Dl \gm_t/\gm_t$ gives
$$
    \ol{r}{}_t^\dl \leq \left( \dl+(1-\dl)\,\frac{\Dl\gm_t}{\gm_t}\right) \leq
        \big(\dl I_{\{\Dl \gm_t=0\}} +I_{\{\Dl \gm_t\neq 0\}}\big).
$$

It remains only to apply the condition (\ref{2-2.5}).
\end{proof}

In the next corollary we will need the following group of conditions:

For $\dl$, $0<\dl<\dl_0/2$,
\begin{gather}
    \bigg[ \dl\,\frac{g}{\gm} -\bt(z)\bigg]^+ \circ K_\infty^c<\infty
        \;\;(\Pas), \label{2-2.6} \\
    \sum_{t\geq 0} \Bigg[ (1-\bt_t(z_{t-}) \Dl K_t -
        \bigg(1-\frac{\Dl\gm_t}{\gm_t}\bigg)^\dl\Bigg]^+
        I_{\{\bt_t(z_{t-})\Dl K_t\leq 1\}}<\infty \;\;(\Pas), \label{2-2.7} \\
    \sum_{t\geq 0} \Bigg[ (\bt_t(z_{t-}) \Dl K_t -1 -
        \bigg(1-\frac{\Dl\gm_t}{\gm_t}\bigg)^\dl\Bigg]^+
        I_{\{\bt_t(z_{t-})\Dl K_t\geq 1\}}<\infty \;\;(\Pas). \label{2-2.8}
\end{gather}

\begin{corollary}\label{2-c2.2}
Suppose that the process
\begin{equation}\label{2-2.9}
    (\bt_t(z_{t-})\Dl K_t)_{t\geq 0} \;\;\; \text{is eventually bounded.}
\end{equation}

Then if Eq. $(\ref{2-2.5})$ holds,
\begin{enumerate}
\item[(1)] $\{(\ref{2-2.6})$, $(\ref{2-2.7})$, $(\ref{2-2.8})$ for all $\dl$, $0<\dl<\dl_0/2\}\Ra \{(\ref{2-2.3})$ for all $\dl$, $0<\dl<\dl_0\}$;

\item[(2)] if, in addition, the process $\xi=(\xi_t)_{t\geq 0}$, with $\xi_t=\sup\limits_{s\geq t} (\Dl \gm_s/\gm_s)$ is eventually $<1$, then the reverse implication ``$\Leftarrow$'' holds in $(1)$;

\item[(3)] $\{(\ref{2-2.6})$, $(\ref{2-2.7})$, $(\ref{2-2.8})$ for $\dl=\dl_0/2\}\Ra \{ (\ref{2-2.6})$, $(\ref{2-2.7})$, $(\ref{2-2.8})$ for all $\dl$, $0<\dl<\dl_0/2\}$ $($ here $\dl_0$ is some fixed constant with $0<\dl_0\leq 1)$.
\end{enumerate}
\end{corollary}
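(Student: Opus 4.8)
The plan is to split the integral in (\ref{2-2.3}) into its continuous and purely discontinuous parts and to treat both through a single reparametrisation: to obtain (\ref{2-2.3}) at a given level $\dl'\in(0,\dl_0)$ one uses (\ref{2-2.6})--(\ref{2-2.8}) at the half level $\dl=\dl'/2\in(0,\dl_0/2)$. Write $x_t=\Dl\gm_t/\gm_t$, so that $\gm_{t-}/\gm_t=1-x_t\in(0,1)$ (since $\gm_{t-}\ge\gm_0=1$), and $b_t=\bt_t(z_{t-})\Dl K_t\ge0$. I would first record the two explicit integrands supplied by Lemma~\ref{2-l2.1}. On $\{\Dl K_t=0\}=\{\Dl\gm_t=0\}$ one has $r_t^{\dl'}=\dl' g_t/\gm_t$ and the prefactor $(\gm_{t-}/\gm_t)^{-\dl'}=1$, so the continuous integrand is $[\dl' g_t/\gm_t-2\bt_t(z_{t-})]^+$. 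Since $K^c$ is atomless and $z$ has at most countably many jumps, $z_{t-}=z_t$ holds $dK^c$-a.e., and the elementary identity $[\dl' g/\gm-2\bt(z)]^+=2\,[(\dl'/2)g/\gm-\bt(z)]^+$ identifies this part, up to the harmless factor $2$, with the integrand of (\ref{2-2.6}) at $\dl=\dl'/2$.

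For the jump part I would use $[a]^+\Dl K_t=[a\,\Dl K_t]^+$ together with the identity $r_t^{\dl'}\Dl K_t=1-(1-x_t)^{\dl'}$ (immediate from Lemma~\ref{2-l2.1} and $\Dl\gm_t=g_t\Dl K_t$). Completing the square gives $r_t^{\dl'}\Dl K_t-2b_t+b_t^2=(1-b_t)^2-(1-x_t)^{\dl'}$, so each jump contributes $(1-x_t)^{-\dl'}\big[(1-b_t)^2-(1-x_t)^{\dl'}\big]^+$. The crucial step is the factorisation, with $\dl=\dl'/2$,
\[
    \big[(1-b_t)^2-(1-x_t)^{\dl'}\big]^+
    =\big[\,|1-b_t|-(1-x_t)^{\dl}\,\big]^+\big(|1-b_t|+(1-x_t)^{\dl}\big),
\]
after which $\big[\,|1-b_t|-(1-x_t)^{\dl}\,\big]^+$ splits, according to $b_t\le1$ or $b_t\ge1$, into exactly the summands of (\ref{2-2.7}) and (\ref{2-2.8}) at level $\dl$. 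For implication $(1)$ I would bound the remaining factors from above: (\ref{2-2.9}) makes $|1-b_t|$ eventually bounded and $(1-x_t)^\dl\le1$, so $|1-b_t|+(1-x_t)^\dl$ is eventually $\le\const$, while (\ref{2-2.5}) makes $(1-x_t)^{-\dl'}$ eventually bounded. Hence the jump part of (\ref{2-2.3}) is dominated, up to a constant and a finite initial segment, by the sums in (\ref{2-2.7}) and (\ref{2-2.8}), which yields $(1)$.

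For the reverse implication $(2)$ I would run the same factorisation backwards. The continuous part is immediate because the identity above is an equality. For the jump part one must bound $\big[\,|1-b_t|-(1-x_t)^\dl\,\big]^+$ from above by the jump integrand of (\ref{2-2.3}), and this requires a lower bound on $|1-b_t|+(1-x_t)^\dl\ge(1-x_t)^\dl$, i.e.\ that $1-x_t$ stay bounded away from $0$. This is precisely where the extra hypothesis enters: if $\xi_t=\sup_{s\ge t}(\Dl\gm_s/\gm_s)$ is eventually $<1$, then $(1-x_t)^\dl\ge(1-\xi_t)^\dl\ge c>0$ eventually, and since $(1-x_t)^{-\dl'}\ge1$ one gets $\big[\,|1-b_t|-(1-x_t)^\dl\,\big]^+\le c^{-1}(1-x_t)^{-\dl'}\big[(1-b_t)^2-(1-x_t)^{\dl'}\big]^+$, so finiteness of (\ref{2-2.3}) forces (\ref{2-2.7}) and (\ref{2-2.8}). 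Finally $(3)$ is pure monotonicity in $\dl$: because $g/\gm\ge0$ and $0<1-x_t<1$ (so $(1-x_t)^\dl$ decreases as $\dl$ grows), each integrand or summand in (\ref{2-2.6}), (\ref{2-2.7}), (\ref{2-2.8}) is nondecreasing in $\dl$, whence finiteness at $\dl=\dl_0/2$ is inherited by every smaller $\dl$.

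The main obstacle, and the real content, is the bookkeeping around the factorisation $(1-b)^2-(1-x)^{\dl'}=(|1-b|-(1-x)^{\dl})(|1-b|+(1-x)^{\dl})$ with the deliberate choice $\dl=\dl'/2$: it converts the squared/quadratic drift combination in (\ref{2-2.3}) into the first-order increments controlled by (\ref{2-2.7})--(\ref{2-2.8}), and it explains why the two $\dl$-ranges differ by a factor $2$. Controlling the companion factor $|1-b|+(1-x)^\dl$ from above (for $(1)$) is cheap given (\ref{2-2.9}) and (\ref{2-2.5}), but bounding it from below (for $(2)$) genuinely needs $x_t$ bounded away from $1$, which is exactly why the reverse implication demands the eventual condition $\xi_t<1$.
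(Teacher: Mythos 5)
Your proof is correct and follows essentially the same route as the paper: the same splitting of the integral in (\ref{2-2.3}) into its continuous and purely discontinuous parts, the same difference-of-squares factorisation $(1-b)^2-(1-x)^{\delta'}=\bigl(|1-b|-(1-x)^{\delta'/2}\bigr)\bigl(|1-b|+(1-x)^{\delta'/2}\bigr)$ linking level $\delta'$ to level $\delta'/2$ (this is exactly the paper's identity (\ref{2-4.1}) together with the bounds (\ref{2-4.2})--(\ref{2-4.3})), the same use of (\ref{2-2.5}) and (\ref{2-2.9}) for the upper bound in (1) and of the eventual condition $\xi<1$ for the lower bound in (2), and the same monotonicity in $\delta$ for (3).
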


\begin{proof}
By the simple calculations, for all $\dl$, $0<\dl<\dl_0$, $0<\dl_0\leq 1$,
\allowdisplaybreaks
\begin{align}
    & \int_0^\infty \left(\frac{\gm_{t-}}{\gm_t}\right)^{-\dl}
        \Bigg[\left(\dl I_{\{\Dl \gm_t=0\}} +
            \frac{1-(1-\Dl\gm_t/\gm_t)^\dl}{\Dl\gm_t/\gm_t} \,
            I_{\{\Dl \gm_t\neq 0\}} \right)\frac{g_t}{\gm_t} \notag \\
    & \quad -2\bt_t(z_{t-})+\bt_t^2(z_{t-}) \Dl K_t\Bigg]^+ dK_t =
        \int_0^\infty \left[\dl\,\frac{g_t}{\gm_t}-2\bt_t(z_{t-})\right]^+dK_t^c
        \notag \\
    & \quad + \sum_{t\geq 0} \left(\frac{\gm_{t-}}{\gm_t}\right)^{-\dl}
        \left(1-\bt_t(z_{t-}) \Dl K_t-(1-\Dl \gm_t/\gm_t)^{\dl/2}\right) \notag \\
    & \quad \times \left[1-\bt_t(z_{t-}) \Dl K_t+(1-\Dl \gm_t/\gm_t)^{\dl/2}\right]^+
        I_{\{\bt_t(z_{t-})\Dl K_t\leq 1\}} \notag \\
    & \quad + \sum_{t\geq 0} \left(\frac{\gm_{t-}}{\gm_t}\right)^{-\dl}
        \left(\bt_t(z_{t-}) \Dl K_t-1+(1-\Dl \gm_t/\gm_t)^{\dl/2}\right) \notag \\
    & \quad \times \left[\bt_t(z_{t-}) \Dl K_t-1-(1-\Dl \gm_t/\gm_t)^{\dl/2}\right]^+
        I_{\{\bt_t(z_{t-})\Dl K_t\geq 1\}} . \label{2-4.1}
\end{align}

Now for the validity if implications (1) and (2) it is enough to show that under conditions  (\ref{2-2.5}) and (\ref{2-2.9}), the processes
$$
    \left(1-\bt(z_{-}) \Dl K+(1-\Dl \gm/\gm)^{\dl/2}\right)
        I_{\{\bt(z_-)\Dl K\leq 1\}}
$$
and
$$
    \left(\bt(z_{-}) \Dl K-1 +(1-\Dl \gm/\gm)^{\dl/2}\right)
        I_{\{\bt(z_-)\Dl K\geq 1\}}
$$
are eventually bounded and, moreover, if $\xi<1$ eventually, these processes are bounded from below by a strictly positive random constant. Indeed, for each $0<\dl<1$ and $t\geq 0$, if $\bt_t(z_{t-})\Dl K_t\leq 1,$
\begin{equation}\label{2-4.2}
    1-\sup_{s\geq t} \frac{\Dl \gm_s}{\gm_s} \leq 1-\bt_t(z_{t-})\Dl K_t +
        (1-\Dl \gm_t/\gm_t)^{\dl/2} \leq 2
\end{equation}
and,  if $\bt_t(z_{t-})\Dl K_t\geq 1$,
\begin{equation}\label{2-4.3}
    1-\sup_{s\geq t} \frac{\Dl \gm_s}{\gm_s} \leq \bt_t(z_{t-})\Dl K_t -1 +
        (1-\Dl \gm_t/\gm_t)^{\dl/2} \leq \bt_t(z_{t-}) \Dl K_t.
\end{equation}

The implication (3) simply follows from the inequality $(1-x)^\dl\leq (1-x)^{1/2}$ if $0<x<1$ and $0<\dl<1/2$.
\end{proof}

The following result is an immediate consequence of Corollary \ref{2-c2.2}.

\begin{corollary}\label{2-c2.3}
Suppose that
\begin{equation}\label{2-2.10}
    \sum_{t\geq 0} I_{\{\bt_t(z_{t-}) \Dl K_t\geq 1\}}<\infty \;\;\;
        \text{and} \;\;\; \sum_{t\geq 0} \left(\frac{\Dl\gm_t}{\gm_t}\right)^2
        <\infty \;\; (\Pas).
\end{equation}

Then Eq. $(\ref{2-2.7})$ is equivalent to
\begin{equation}\label{2-2.11}
    \int_0^\infty \bigg[\dl-\frac{\gm_t\bt_t(z_{t-})}{\gm_t}\bigg]^+
        \frac{d\gm_t^d}{\gm_t} <\infty \;\; (\Pas)
\end{equation}
and
$$
    \{(\ref{2-2.6}), \;(\ref{2-2.11}) \;\;\text{for all} \;\; \dl, \;
        0\leq \dl\leq \dl_0/2\} \Lra \{(\ref{2-2.3}) \;\;\text{for all} \;\; \dl, \;
        0<\dl<\dl_0\}.
$$
\end{corollary}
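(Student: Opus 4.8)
The plan is to derive the statement from Corollary~\ref{2-c2.2}, so I would first check that~(\ref{2-2.10}) supplies the hypotheses of that corollary. The second part of~(\ref{2-2.10}) gives $\Dl\gm_t/\gm_t\to 0$, whence $\gm_t/\gm_{t-}=(1-\Dl\gm_t/\gm_t)^{-1}\to 1$ is eventually bounded, so~(\ref{2-2.5}) holds; moreover $\xi_t=\sup_{s\geq t}(\Dl\gm_s/\gm_s)\to 0$, so $\xi<1$ eventually. The first part of~(\ref{2-2.10}) shows that $\bt_t(z_{t-})\Dl K_t\geq 1$ for only $\Pas$ finitely many indices, so $(\bt_t(z_{t-})\Dl K_t)_{t\geq 0}$ is eventually bounded and~(\ref{2-2.9}) holds. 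Thus Corollary~\ref{2-c2.2}(1)--(2) applies and yields the equivalence between $\{(\ref{2-2.6}),(\ref{2-2.7}),(\ref{2-2.8})$ for all $0<\dl<\dl_0/2\}$ and $\{(\ref{2-2.3})$ for all $0<\dl<\dl_0\}$.

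Next I would eliminate~(\ref{2-2.8}) from this list: its summand carries the factor $I_{\{\bt_t(z_{t-})\Dl K_t\geq 1\}}$, which by the first part of~(\ref{2-2.10}) is nonzero for only $\Pas$ finitely many $t$, so~(\ref{2-2.8}) is automatic and can be dropped. Hence it remains to identify~(\ref{2-2.7}) with~(\ref{2-2.11}).

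This identification is the core step. Both~(\ref{2-2.7}) and~(\ref{2-2.11}) are supported on the jump times of $\gm$ (equivalently of $K$, since $g>0$): at such a time $\Dl\gm_t=g_t\Dl K_t$, and writing $x_t=\Dl\gm_t/\gm_t$ and $b_t=\bt_t(z_{t-})\Dl K_t$, the summand of~(\ref{2-2.7}) is $[\,(1-(1-x_t)^\dl)-b_t\,]^+$, while~(\ref{2-2.11}) is an integral against the purely discontinuous part $\gm^d$, i.e. a sum over jumps; evaluating its integrand at a jump, multiplying by $\Dl\gm_t/\gm_t$ and using $\Dl\gm_t=g_t\Dl K_t$ turns~(\ref{2-2.11}) into $\sum_t[\,\dl x_t-b_t\,]^+$. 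Note that $\gm_{t-}\geq\gm_0=1$ forces $x_t\in[0,1)$, and that in each series the terms with $b_t>1$ vanish (since then $\dl x_t<1<b_t$), so the indicator $I_{\{b_t\leq 1\}}$ in~(\ref{2-2.7}) is redundant. To compare the two I would invoke the two-sided estimate
$$
    0\leq \big(1-(1-x)^\dl\big)-\dl x\leq (1-\dl)x^2, \qquad 0<x<1,\ 0<\dl<1,
$$
whose upper bound is the inequality already used in Corollary~\ref{2-c2.1} and whose lower bound follows from the concavity of $x\mapsto(1-x)^\dl$ (its graph lies below the tangent line $1-\dl x$ at the origin). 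Together with the $1$-Lipschitz property $|[a]^+-[b]^+|\leq|a-b|$ this gives the term-by-term bound
$$
    \big|\,[\,(1-(1-x_t)^\dl)-b_t\,]^+-[\,\dl x_t-b_t\,]^+\,\big|\leq x_t^2 ,
$$
so the two series differ by at most $\sum_t x_t^2<\infty$ and therefore converge or diverge together; this is exactly~(\ref{2-2.7})$\Lra$(\ref{2-2.11}).

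Assembling the three steps, and using Remark~\ref{2-r2.1} and Corollary~\ref{2-c2.2}(3) to move freely among the stated ranges of $\dl$ (the endpoints $\dl=0$ and $\dl=\dl_0/2$ being handled trivially, since at $\dl=0$ both~(\ref{2-2.6}) and~(\ref{2-2.11}) reduce to $[\,\cdot\,]^+=0$), one obtains the claimed equivalence $\{(\ref{2-2.6}),(\ref{2-2.11})$ for all $\dl\}\Lra\{(\ref{2-2.3})$ for all $\dl\}$. The main obstacle is the core comparison: one needs the quadratic control of $(1-(1-x)^\dl)-\dl x$ to hold \emph{uniformly} over the whole range $x\in[0,1)$ rather than merely for small $x$, so that the single hypothesis $\sum_t x_t^2<\infty$ suffices to absorb all the term-by-term errors at once; the accompanying bookkeeping — the redundancy of the indicator in~(\ref{2-2.7}) and the finitely many indices with $b_t\geq 1$ — is routine once this estimate is established.
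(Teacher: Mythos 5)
Your proposal is correct and follows essentially the same route as the paper: verify the hypotheses of Corollary \ref{2-c2.2} from (\ref{2-2.10}) (with (\ref{2-2.8}) and (\ref{2-2.9}) automatic and $\xi<1$ eventually), then identify (\ref{2-2.7}) with (\ref{2-2.11}) via the two-sided estimate $\dl x\leq 1-(1-x)^\dl\leq \dl x+(1-\dl)x^2$ together with the subadditivity/Lipschitz property of $[\cdot]^+$ and the summability of $(\Dl\gm_t/\gm_t)^2$. Your write-up is in fact more explicit than the paper's (which states the same inequalities and leaves the term-by-term comparison to the reader), and the extra bookkeeping about the indicator and the finitely many indices with $\bt_t(z_{t-})\Dl K_t\geq 1$ is accurate.
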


\begin{proof}
The conditions (\ref{2-2.8}) and (\ref{2-2.9}) are automatically satisfied and also $\xi<1$ eventually $(\xi=(\xi_t)_{t\geq 0}$ is the process with $\xi_t=\sup\limits_{s\geq t} (\Dl\gm_s/\gm_s)$). So it follows from Corollary \ref{2-c2.2} (2) that
$$
    \{(\ref{2-2.6}), (\ref{2-2.7}) \;\;\text{for all} \;\; \dl, \; 0<\dl<\dl_0/2\} \Ra
    \{(\ref{2-2.3}) \;\;\text{for all} \;\; \dl, \; 0<\dl<\dl_0\}.
$$

It remains to prove that Eq. (\ref{2-2.7}) is equivalent to Eq. (\ref{2-2.11}). This immediately follows from the inequalities
\begin{gather*}
    [a+b]^+\leq [a]^++[b]^+, \;\; \dl x \leq 1-(1-x)^\dl\leq
        \dl x+(1-\dl)x^2,\\
    0<x<1, \quad  0<\dl<1,
\end{gather*}
applying to the $x=(\Dl\gm_s/\gm_s)$ and to the expression
$$
    \left[1-\bt_t(z_{t-}) \Dl K_t +(1-\Dl \gm_t/\gm_t)^{\dl}\right]^+,
$$
and from the condition $\sum\limits_{t\geq 0} (\Dl\gm_t/\gm_t)^2<\infty$ $(\Pas)$.
\end{proof}

\begin{remark}\label{2-r.2.2}
The condition $(\ref{2-2.11})$ can be written as
$$
    \sum_{t\geq 0} \bigg[ \dl\,\frac{\Dl\gm_t}{\gm_t} -\bt_t(z_{t-}) \Dl K_t\bigg]^+
        <\infty \;\; (\Pas).
$$
\end{remark}

Below using the stochastic version of Kronecker Lemma, we give an alternative group of conditions to ensure the convergence
$$
    \gm_t^\dl z_t\to 0 \;\;\;\text{as} \;\;\; t\to \infty \;\;(\Pas)
$$
for all $0<\dl<\dl_0/2$, $0<\dl_0\leq 1$.

Rewrite Eq. (\ref{2-1.1}) in the following form
$$
    z_t=z_0+\int_0^t z_{s-} dB_s +G_t,
$$
where
$$
    dB_t=-\ol{\bt}_t(z_{t-}) dK_t, \;\;\;
    \ol{\bt}_t(u) =\bt_t(u) I_{\{\bt_t(u)\Dl K_t\neq 1\}}
$$
and
\begin{equation}\label{2-2.12}
    G_t=-\sum_{s\leq t} z_{s-} I_{\{\bt_t(z_{t-})\Dl K_t=1\}} +
        \int_0^t M(ds,z_{s-}).
\end{equation}

Since $\Dl B_t=-\ol{\bt}_t(z_{t-}) \Dl K_t \neq -1$ we can represent $z$ as
$$
    z_t=\ve_t(B)\bigg( z_0+\int_0^t \ve_s^{-1}(B) \,dG_s\bigg)
$$
and multiplying this equation by $\gm_t^\dl$ yields
\begin{equation}\label{2-2.13}
    \gm_t^\dl z_t =\sign \ve_t(B) \Gm_t^{(\dl)}
        \bigg( z_0+\int_0^t \sign \ve_s(B) \{\Gm_s^{(\dl)}\}^{-1}
            \gm_s^\dl \,dG_s\bigg),
\end{equation}
where $\Gm_t^{(\dl)}=\gm_t^\dl |\ve_t(B)|$.

\begin{definition}
We say that predictable processes $\xi=(\xi_t)_{t\geq 0}$ and $\eta=(\eta_t)_{t\geq 0}$ are equivalent as $t\to \infty$ and write $\xi\simeq \eta$ if there exists a process $\zt=(\zt_t)_{t\geq 0}$ such that
$$
    \xi_t=\zt_t\eta_t,
$$
and
$$
    0<\zt^1<|\zt|<\zt^2<\infty
$$
eventually, for some random constants $\zt^1$ and $\zt^2$.
\end{definition}

The proof of the following result is based on the stochastic version of the Kronecker Lemma.

\begin{proposition}\label{2-p2.2}
Suppose that for all $\dl$, $0<\dl<\dl_0/2$, $0<\dl_0\leq 1$,

\begin{enumerate}
\item[(1)] there exists a positive and decreasing predictable process $\ol{\Gm}{}^{(\dl)}=(\ol{\Gm}{}_t^{(\dl)})_{t\geq 0}$ such that
$$
    \ol{\Gm}{}_0^{(\dl)}=1\;\;(\Pas), \quad
    P\big\{ \lim_{t\to 0} \ol{\Gm}{}_t^{(\dl)}=0\big\}=1, \;\;\;
    \Gm^{(\dl)} \simeq \ol{\Gm}{}^{(\dl)}
$$
and

\item[(2)]
\begin{gather}
    \sum_{t\geq 0} I_{\{\bt_t(z_{t-})\Dl K_t=1\}} <\infty \;\; (\Pas),\label{2-2.14}\\
    \int_0^\infty \gm_t^{2\dl} h_t(z_{t-},z_{t-}) dK_t
        <\infty \;\; (\Pas).\label{2-2.15}
\end{gather}
\end{enumerate}

Then
$$
    \gm_t^\dl z_t\to 0 \;\;\;\text{as} \;\;\; t\to \infty \;\;(\Pas)
$$
for all $0<\dl<\dl_0/2$, $0<\dl_0\leq 1$.
\end{proposition}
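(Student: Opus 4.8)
The plan is to start from the representation that precedes the statement. Writing $V_t=\int_0^t \ve_s^{-1}(B)\,dG_s$, equation (\ref{2-2.13}) is just $\gm_t^\dl z_t=\sign\ve_t(B)\,\Gm_t^{(\dl)}(z_0+V_t)$, since the integrand there collapses to $\sign\ve_s(B)\{\Gm_s^{(\dl)}\}^{-1}\gm_s^\dl=\ve_s^{-1}(B)$. Hence $|\gm_t^\dl z_t|=\Gm_t^{(\dl)}|z_0+V_t|$. By hypothesis (1) we have $\Gm^{(\dl)}\simeq\ol{\Gm}{}^{(\dl)}$, so writing $\Gm_t^{(\dl)}=\zt_t\ol{\Gm}{}_t^{(\dl)}$ with $0<\zt^1<|\zt_t|<\zt^2<\infty$ eventually, and setting $b_t:=(\ol{\Gm}{}_t^{(\dl)})^{-1}$, I obtain a predictable process with $b_0=1$ that increases to $\infty$ (as $\ol{\Gm}{}_t^{(\dl)}\downarrow0$). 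Because $|\zt|$ is eventually squeezed between positive constants, it suffices to prove $b_t^{-1}(z_0+V_t)\to0$, and since $b_t^{-1}z_0\to0$ this reduces to showing $b_t^{-1}V_t\to0$ $(\Pas)$.

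Next I split $G$ according to (\ref{2-2.12}) as $G=G^{(1)}+\wt M$, where $G_t^{(1)}=-\sum_{s\leq t}z_{s-}I_{\{\bt_s(z_{s-})\Dl K_s=1\}}$ and $\wt M_t=\int_0^t M(ds,z_{s-})\in\cM_{\loc}^2(P)$, giving $V_t=V_t^{(1)}+V_t^M$ with $V_t^{(1)}=\int_0^t\ve_s^{-1}(B)\,dG_s^{(1)}$ and $V_t^M=\int_0^t\ve_s^{-1}(B)\,d\wt M_s$. By (\ref{2-2.14}) the set of times at which $\bt_s(z_{s-})\Dl K_s=1$ is a.s.\ finite; at each such time $\bt_s(z_{s-})I_{\{\bt_s(z_{s-})\Dl K_s\neq1\}}=0$, so $\Dl B_s=0$ and $\ve_s^{-1}(B)$ is finite there (recall $\Dl B\neq-1$ always, so $\ve(B)$ never vanishes). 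Thus $V_t^{(1)}=-\sum_{s\leq t}\ve_s^{-1}(B)z_{s-}I_{\{\bt_s(z_{s-})\Dl K_s=1\}}$ is a finite sum that is eventually constant, so it converges to a finite random variable and $b_t^{-1}V_t^{(1)}\to0$.

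The martingale part is where the stochastic Kronecker Lemma enters, and this is the main obstacle. Since $b_t$ is predictable and increases to $\infty$, the Lemma gives $b_t^{-1}V_t^M\to0$ provided the local martingale $N_t:=\int_0^t b_s^{-1}\ve_s^{-1}(B)\,d\wt M_s$ converges a.s.\ as $t\to\infty$. To secure convergence I compute
$$
    \la N\ra_t=\int_0^t b_s^{-2}\ve_s^{-2}(B)\,h_s(z_{s-},z_{s-})\,dK_s ,
$$
and then exploit the equivalence: from $\Gm_s^{(\dl)}=\gm_s^\dl|\ve_s(B)|=\zt_s b_s^{-1}$ I get $b_s^{-2}=\zt_s^{-2}\gm_s^{2\dl}\ve_s^2(B)$, and the factors $\ve_s^{\pm2}(B)$ cancel, leaving
$$
    \la N\ra_t=\int_0^t \zt_s^{-2}\gm_s^{2\dl}\,h_s(z_{s-},z_{s-})\,dK_s .
$$
As $|\zt_s|$ is eventually bounded below by $\zt^1>0$, condition (\ref{2-2.15}) yields $\la N\ra_\infty<\infty$ $(\Pas)$; hence $N$ converges a.s.\ on $\{\la N\ra_\infty<\infty\}=\Om$ $(\Pas)$ by the convergence of locally square integrable martingales on their finite-characteristic set. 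This cancellation is precisely what makes (\ref{2-2.15}) the natural hypothesis, and verifying the exact form of the continuous-time Kronecker Lemma is the delicate point.

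Finally I assemble the pieces. Combining $b_t^{-1}V_t^{(1)}\to0$ with the Kronecker conclusion $b_t^{-1}V_t^M\to0$ gives $b_t^{-1}V_t\to0$, hence $b_t^{-1}(z_0+V_t)\to0$. Therefore
$$
    |\gm_t^\dl z_t|=\Gm_t^{(\dl)}|z_0+V_t|=|\zt_t|\,b_t^{-1}|z_0+V_t|\to0\;\;(\Pas),
$$
using the eventual upper bound $|\zt_t|<\zt^2$. Since the argument applies verbatim for each fixed $\dl$ with $0<\dl<\dl_0/2$, the stated convergence holds for all such $\dl$, which completes the proof plan.
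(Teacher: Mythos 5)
Your proposal is correct and follows essentially the same route as the paper: both arguments reduce the claim via Eq.\ (\ref{2-2.13}) and the equivalence $\Gm^{(\dl)}\simeq\ol{\Gm}{}^{(\dl)}$ to the stochastic Kronecker Lemma applied with the increasing process $(\ol{\Gm}{}_t^{(\dl)})^{-1}$, and both secure convergence of the transformed integral by splitting $G$ into the (eventually constant, by (\ref{2-2.14})) jump correction and the local martingale part, whose convergence follows from $\{\la N\ra_\infty<\infty\}\sbseq\{N\to\}$ together with (\ref{2-2.15}). Your version merely makes explicit the cancellation $b_s^{-2}\ve_s^{-2}(B)=\zt_s^{-2}\gm_s^{2\dl}$ that the paper leaves implicit.
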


\begin{proof}
Recall the stochastic version of Kronecker Lemma (see, e.g., \cite{proc12}, Ch.~2, Section 6):

{\bf Kronecker Lemma.}
{\it Suppose that $X=(X_t)_{t\geq 0}$ is s semimartingale and $L=(L_t)_{t\geq 0}$ is s predictable increasing process. Then
$$
    \{L_\infty=\infty\} \cap \{Y\to\} \sbseq \left\{ \frac{X}{L}\to 0\right\}
        \;\;(\Pas),
$$
where $Y=(1+L)^{-1} \cdot X$.}

Put $(1+L_t)^{-1}=\ol{\Gm}{}_t^{(\dl)}$ and $X_t=\int\limits_0^t (\Gm_s^{(\dl)})^{-1} \sign \ve_s(B)\gm_s^\dl dG_s$. Then it follows from the condition (1) that $L$ is an increasing process with $L_\infty=\infty$ $(\Pas)$ and
\begin{align*}
    A & =\{\ol{\Gm}{}_\infty^{(\dl)}=0\} \cap
        \bigg\{ \int_0^{\cdot} \ol{\Gm}{}_s^{(\dl)}(\Gm_s^{(\dl)})^{-1}
            \sign \ve_s(B) \gm_s^\dl d G_s\to \bigg\} \\
    & \sbseq \bigg\{ \frac{\ol{\Gm}{}^{(\dl)}}{1-\ol{\Gm}{}^{(\dl)}}
         \int_0^{\cdot} (\Gm_s^{(\dl)})^{-1}
            \sign \ve_s(B) \gm_s^\dl d G_s\to 0 \bigg\}
        \sbseq \{\gm^\dl z\to 0\},
\end{align*}
where the latter inequality follows from the relation $\ol{\Gm}{}^{(\dl)}\simeq \Gm^{(\dl)}$ and Eq. (\ref{2-2.13}).

At the same time, from Eq. (\ref{2-2.12}) and from the well-known fact that if $M\in \cM_{\loc}^2$, then $\{\la M\ra_\infty<\infty\} \sbseq \{ M\to \}$ (see, e.g., \cite{proc12}), we have
$$
    \{\ol{\Gm}{}_\infty^{(\dl)}=0\} \cap
        \bigg\{ \sum_{t\geq 0} I_{\{\bt_t(z_{t-})\Dl K_t=1\}}<\infty\bigg\} \cap
        \bigg\{ \int_0^\infty\gm_t^{2\dl} h_t(z_{t-},z_{t-}) d K_t<\infty \bigg\}
        \sbseq A.
$$

The result now follows from Eqs. (\ref{2-2.14}) and (\ref{2-2.15}).
\end{proof}

Now we establish some simple results which are useful for verifying the condition (1) of Proposition \ref{2-p2.2}.

By the definition of $\ve_t(B)$,
$$
    \ve_t(B)=e^{B_t^c} \prod_{s\leq t} (1+\Dl B_s)
$$
and since
$$
    \gm_t^\dl =\exp \Bigg( \dl \int_0^t \frac{d\gm_s^c}{\gm_s} -
        \sum_{s\leq t} \log \left( 1-\frac{\Dl\gm_s}{\gm_s}\right)^\dl\Bigg)
$$
we obtain
\begin{align}
    \Gm_t^{(\dl)} & =\exp \Bigg( B_t^c+\dl\int_0^t \frac{d\gm_s^c}{\gm_s} +
        \sum_{s\leq t} \log \frac{|1+\Dl B_s|}{\big(1-\frac{\Dl\gm_s}{\gm_s}\big)^\dl}
            \Bigg) \notag \\
    & =\exp \bigg( -\int_0^t D_s dC_s^{(\dl)}\bigg), \label{2-2.16}
\end{align}
where $D_t=1/\gm_t$ and
\begin{align}
    C_t^{(\dl)} & =\int_0^t \Bigg( \left\{ \frac{\bt_s(z_{s-})\gm_s}{g_s} -\dl\right\}
        I_{\{\Dl \gm_s=0\}} \notag \\
    & \quad  -\frac{\gm_s}{\Dl \gm_s} \,
        \log \frac{|1+\Dl B_s|}{\big(1-\frac{\Dl \gm_s}{\gm_s}\big)^\dl} \,
        I_{\{\Dl\gm_s\neq 0\}}\Bigg) d\gm_s. \label{2-2.17}
\end{align}

Using the formula of integration by parts
$$
    d(D_t C_t)=D_td C_t+C_{t-} dD_t
$$
and the relation
$$
    d\left(\frac{1}{\gm_t}\right) =-\frac{1}{\gm_{t-}}\,\frac{d\gm_t}{\gm_t}
$$
we get from Eq. (\ref{2-2.16}) that
$$
    \Gm_t^{(\dl)} =\exp \bigg( -\frac{C_t^{(\dl)}}{\gm_t}-
        \int_0^t C_{s-}^{(\dl)}\,\frac{1}{\gm_{s-}}\,\frac{d\gm_s}{\gm_s}\bigg).
$$

Therefore,
\begin{equation}\label{2-2.18}
    \Gm_t^{(\dl)}=\zt_t \ol{\Gm}{}_t^{(\dl)},
\end{equation}
where
\begin{align*}
    \ol{\Gm}{}_t^{(\dl)} & =\exp \bigg( -\int_0^t
        \bigg[ \frac{C_{s-}^{(\dl)}}{\gm_{s-}}\bigg]^+ \frac{d\gm_s}{\gm_s}\bigg), \quad
    \zt_t  =\exp \bigg( -\frac{C_t^{(\dl)}}{\gm_t} +\int_0^t
        \bigg[ \frac{C_{s-}^{(\dl)}}{\gm_{s-}}\bigg]^+ \frac{d\gm_s}{\gm_s}\bigg).
\end{align*}

The following proposition is an immediate consequence of Eq. (\ref{2-2.18}).

\begin{proposition}\label{2-p2.3}
Suppose that for each $\dl$, $0<\dl<\dl_0/2$, $0<\dl_0\leq 1$, the following conditions hold:
\begin{enumerate}
\item[(a)] There exist random constants $\ul{C}(\dl)$ and $\ol{C}(\dl)$ such that
$$
    -\infty <\ul{C}(\dl) <\frac{C^{(\dl)}}{\gm} <\ol{C}(\dl)<\infty
$$
eventually, where $C^{(\dl)}/\gm=(C_t^{(\dl)}/\gm_t)_{t\geq 0}$.
\smallskip

\item[(b)] $\ds \qquad \qquad \int_0^\infty \bigg[ \frac{C_{t-}^{(\dl)}}{\gm_{t-}}\bigg]^-
    \frac{d\gm_t}{\gm_t}<\infty \;\; (\Pas).$
\smallskip

\item[(c)] $\ds \qquad \qquad \int_0^\infty \bigg[ \frac{C_{t-}^{(\dl)}}{\gm_{t-}}\bigg]^+
    \frac{d\gm_t}{\gm_t}=\infty \;\; (\Pas).$
\end{enumerate}
\smallskip

    Then $\Gm^{(\dl)}\simeq \ol{\Gm}{}^{(\dl)}$ for each $\dl$, $0<\dl<\dl_0/2$.
\end{proposition}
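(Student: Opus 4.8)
The plan is to read the equivalence straight off the factorization~(\ref{2-2.18}), $\Gm_t^{(\dl)}=\zt_t\ol{\Gm}{}_t^{(\dl)}$. By the definition of $\simeq$ it is enough to prove that the factor $\zt=(\zt_t)_{t\geq 0}$ stays, eventually, between two strictly positive finite random constants. Subtracting $\log\ol{\Gm}{}_t^{(\dl)}=-\int_0^t[C_{s-}^{(\dl)}/\gm_{s-}]^+\,d\gm_s/\gm_s$ from the expression for $\log\Gm_t^{(\dl)}$ displayed just before~(\ref{2-2.18}) and using the elementary identity $x-[x]^+=-[x]^-$, I would first record
$$
    \log \zt_t=-\frac{C_t^{(\dl)}}{\gm_t}
        +\int_0^t \Big[\frac{C_{s-}^{(\dl)}}{\gm_{s-}}\Big]^-\,\frac{d\gm_s}{\gm_s},
$$
so that the problem splits into controlling a boundary term and a monotone integral.

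Then I would bound the two pieces separately. The boundary term $-C_t^{(\dl)}/\gm_t$ is eventually trapped between $-\ol{C}(\dl)$ and $-\ul{C}(\dl)$, which is exactly condition~(a). For the integral, since $\gm$ is positive and increasing the measure $d\gm_s/\gm_s$ is nonnegative and the integrand $[C_{s-}^{(\dl)}/\gm_{s-}]^-$ is nonnegative, so $t\mapsto\int_0^t[C_{s-}^{(\dl)}/\gm_{s-}]^-\,d\gm_s/\gm_s$ is nondecreasing, vanishes at the origin, and by condition~(b) converges $\Pas$ to a finite limit; hence it remains bounded, uniformly in $t$, between $0$ and that limit. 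Adding the two bounds gives $\log\zt_t$ eventually between two finite random constants, i.e. $0<\zt^1<\zt_t<\zt^2<\infty$ eventually, which is precisely $\Gm^{(\dl)}\simeq\ol{\Gm}{}^{(\dl)}$.

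Finally, a word on the roles of the hypotheses. Condition~(c) is not used in establishing the equivalence; it serves to force $\int_0^t[C_{s-}^{(\dl)}/\gm_{s-}]^+\,d\gm_s/\gm_s\to\infty$, i.e. $\ol{\Gm}{}_t^{(\dl)}\downarrow 0$, so that $\ol{\Gm}{}^{(\dl)}$ can play the part of the positive decreasing process with zero limit demanded in condition~(1) of Proposition~\ref{2-p2.2}. The one point that needs care is the sign bookkeeping $x=[x]^+-[x]^-$: it must be arranged so that the \emph{positive} part defines $\ol{\Gm}{}^{(\dl)}$ and the \emph{negative} part survives in $\zt_t$. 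This is essential, for if the positive part remained in $\log\zt_t$ then condition~(c) would make $\zt_t$ blow up and the equivalence would be false; it is this interplay, rather than any analytic estimate, that is the real content of the statement.
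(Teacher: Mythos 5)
Your argument is correct and is exactly the route the paper takes: the proposition is stated there as an immediate consequence of the factorization in Eq.~(\ref{2-2.18}), and you have merely supplied the bookkeeping — the boundary term $-C_t^{(\dl)}/\gm_t$ eventually trapped by (a), the monotone integral bounded by its finite limit from (b), hence $\log\zt_t$ eventually between two finite random constants. One remark: your identity $\log\zt_t=-C_t^{(\dl)}/\gm_t+\int_0^t\big[C_{s-}^{(\dl)}/\gm_{s-}\big]^-\,d\gm_s/\gm_s$ is the correct one (the paper's displayed $\zt_t$ in Eq.~(\ref{2-2.18}) carries a $[\,\cdot\,]^+$ where a $[\,\cdot\,]^-$ must stand, otherwise the product $\zt\,\ol{\Gm}{}^{(\dl)}$ would not reproduce $\Gm^{(\dl)}$), and your observation that (c) is not needed for the equivalence itself but only to force $\ol{\Gm}{}_t^{(\dl)}\to 0$ as required in condition (1) of Proposition~\ref{2-p2.2} is accurate.
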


\begin{corollary}\label{2-c2.4}
Suppose that
$$
    0<\frac{C^{(\dl_0/2)}}{\gm} <\frac{C^{(0)}}{\gm} <\ol{C}(0)<\infty
$$
eventually, where $\ol{C}(0)$ is some random constant and the processes $C^{(\dl_0/2)}$ and $C^{(0)}$ are defined in Eq. $(\ref{2-2.17})$ for $\dl=\dl_0/2$ and $\dl=0$, respectively.

Then $\Gm^{(\dl)}\simeq \ol{\Gm}{}^{(\dl)}$ for each $\dl$, $0<\dl<\dl_0/2$, $0<\dl_0\leq 1$.
\end{corollary}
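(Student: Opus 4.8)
The plan is to verify the three hypotheses (a), (b) and (c) of Proposition~\ref{2-p2.3} for every $\dl$ with $0<\dl<\dl_0/2$, and then to apply that proposition verbatim. Everything is driven by one monotonicity observation: for each fixed $t$ the map $\dl\mapsto C_t^{(\dl)}$ is non-increasing.

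To see this I would differentiate the integrand of (\ref{2-2.17}) in $\dl$. On $\{\Dl\gm_s=0\}$ the integrand is $\bt_s(z_{s-})\gm_s/g_s-\dl$, with $\dl$-derivative $-1$; on $\{\Dl\gm_s\neq0\}$ it equals $-\frac{\gm_s}{\Dl\gm_s}\big(\log|1+\Dl B_s|-\dl\log(1-\Dl\gm_s/\gm_s)\big)$, whose $\dl$-derivative is $\frac{\gm_s}{\Dl\gm_s}\log(1-\Dl\gm_s/\gm_s)<0$ since $0<\Dl\gm_s/\gm_s<1$. As both contributions decrease in $\dl$ and the integration is against the non-negative measure $d\gm_s$, the process $C_t^{(\dl)}$, and hence $C_t^{(\dl)}/\gm_t$, is non-increasing in $\dl$. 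Thus for $0<\dl<\dl_0/2$ one gets the sandwich $C^{(\dl_0/2)}/\gm\leq C^{(\dl)}/\gm\leq C^{(0)}/\gm$, and the assumed chain forces, eventually, $0<C^{(\dl)}/\gm<\ol{C}(0)$. This is exactly condition (a), with $\ul{C}(\dl)=0$ and $\ol{C}(\dl)=\ol{C}(0)$.

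Condition (b) is then immediate: as $C^{(\dl)}/\gm>0$ eventually, its left limits are eventually $\geq0$, so $[C_{t-}^{(\dl)}/\gm_{t-}]^-=0$ for all large $t$ and the integral in (b) is finite. For condition (c) I would first record that $\int_0^\infty d\gm_t/\gm_t=\infty$: were this integral finite, then $\int_0^\infty d\gm_t^c/\gm_t<\infty$ together with $\sum_s\Dl\gm_s/\gm_s<\infty$ (whence $-\log(1-\Dl\gm_s/\gm_s)\leq 2\,\Dl\gm_s/\gm_s$ eventually) would give $\log\gm_\infty=\int_0^\infty d\gm_s^c/\gm_s-\sum_s\log(1-\Dl\gm_s/\gm_s)<\infty$, contradicting $\gm_\infty=\infty$. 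Granting then that $[C_{t-}^{(\dl)}/\gm_{t-}]^+$ stays eventually bounded below by a strictly positive constant, the weighted integral in (c) inherits this divergence, and Proposition~\ref{2-p2.3} applies.

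I expect (c) to be the delicate step. Conditions (a) and (b) use only the two-sided bound and the positivity supplied by the sandwich, and indeed they alone already pin down the ratio, since $\log\zt_t=-C_t^{(\dl)}/\gm_t+\int_0^t[C_{s-}^{(\dl)}/\gm_{s-}]^-\,d\gm_s/\gm_s$ is eventually bounded by (a) and (b), giving $\Gm^{(\dl)}\simeq\ol{\Gm}{}^{(\dl)}$. The real work in (c) is to transfer the divergence of $\int d\gm/\gm$ to the weighted integral, which demands that the integrand be bounded away from zero rather than merely positive; this is the one place where the strict lower bound on $C^{(\dl_0/2)}/\gm$ in the hypothesis must be used in full, and where positivity of $C^{(\dl)}/\gm$ by itself would not suffice.
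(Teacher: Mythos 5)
Your overall strategy --- verify (a)--(c) of Proposition \ref{2-p2.3} through monotonicity of $\dl\mapsto C_t^{(\dl)}$ and then quote that proposition --- is the same as the paper's, and your treatment of (a) and (b) is sound. The genuine gap is in (c). There you write ``granting that $[C_{t-}^{(\dl)}/\gm_{t-}]^+$ stays eventually bounded below by a strictly positive constant'', but nothing in your argument delivers such a constant: the sandwich only gives $C^{(\dl)}/\gm\geq C^{(\dl_0/2)}/\gm>0$ eventually, and the hypothesis asserts pointwise strict positivity of $C^{(\dl_0/2)}/\gm$ from some $t_0(\om)$ on, not a lower bound uniform in $t$ --- the ratio could perfectly well decrease to $0$, in which case $\int_0^\infty[C_{t-}^{(\dl)}/\gm_{t-}]^+\,d\gm_t/\gm_t$ need not diverge even though $\int_0^\infty d\gm_t/\gm_t=\infty$. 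So (c) is asserted rather than proved, and it cannot be extracted from positivity alone.

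The paper closes exactly this hole with a quantitative strengthening of your monotonicity: it checks that
$$
C_t^{(\dl)}-C_t^{(\dl_0/2)}\geq \Big(\frac{\dl_0}{2}-\dl\Big)\gm_t,
$$
whence $C^{(\dl)}/\gm>\dl_0/2-\dl$ eventually, a bound uniform in $t$; combined with the divergence of $\int_0^\infty d\gm_t/\gm_t$ this gives (c), and it simultaneously furnishes the constants in (a) and (b). The fix is already latent in your own derivative computation: on $\{\Dl\gm_s\neq 0\}$ the $\dl$-derivative of the integrand of $C^{(\dl)}$ equals $\frac{\gm_s}{\Dl\gm_s}\log(1-\Dl\gm_s/\gm_s)\leq -1$ by $-\log(1-x)\geq x$, and on $\{\Dl\gm_s=0\}$ it equals $-1$ exactly; you only retained that it is negative. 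Integrating the bound $\leq -1$ against $d\gm_s$ yields the displayed inequality and completes the proof. (Your side remark that (a) and (b) alone already force $\Gm^{(\dl)}\simeq\ol{\Gm}{}^{(\dl)}$ is correct for the literal conclusion of the corollary, but (c) is still needed for the corollary to feed into Proposition \ref{2-p2.2} and Theorem \ref{2-t2.2}, which is its purpose, so the gap is not harmless.)
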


This result follows since, as it is easy to check,
$$
    C_t^{(\dl_0/2)} <C_t^{(\dl)}<C_t^{(0)} \;\;\;\text{and} \;\;\;
        C_t^{(\dl)}-C_t^{(\dl_0/2)} \geq \left(\frac{\dl_0}{2}-\dl\right)\gm_t
$$
for each $\dl$, $0<\dl<\dl_0/2$, which gives
$$
    \frac{\dl_0}{2}-\dl <\frac{C^{(\dl)}}{\gm} <\ol{C}(0)
$$
and
$$
    \bigg[ \frac{C^{(\dl)}}{\gm}\bigg]^+>\frac{\dl_0}{2}-\dl \;\;\;\text{and} \;\;\;
    \bigg[ \frac{C^{(\dl)}}{\gm}\bigg]^-=0
$$
eventually.

We shall now formulate the main result of this approach which is an immediate consequence of Propositions \ref{2-p2.2} and \ref{2-p2.3}.

\begin{theorem}\label{2-t2.2}
Suppose that the conditions $(\ref{2-2.14})$, $(\ref{2-2.15})$ and the conditions of Proposition $\ref{2-p2.3}$ hold for all $\dl$, $0<\dl<\dl_0/2$, $0<\dl_0\leq 1$. Then $\Pas$,
$$
    \gm_t^\dl z_t \to 0 \;\;\; \text{as} \;\;\; t\to \infty
$$
for all $\dl$, $0<\dl<\dl_0/2$, $0<\dl_0\leq 1$.
\end{theorem}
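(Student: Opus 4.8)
The plan is to obtain this statement as a direct composition of Proposition~\ref{2-p2.3} and Proposition~\ref{2-p2.2}: the former supplies the comparison relation $\Gm^{(\dl)}\simeq\ol{\Gm}{}^{(\dl)}$, and the latter converts it, together with the martingale-type bounds, into the desired rate. The only genuine work is to check that the \emph{explicit} process $\ol{\Gm}{}^{(\dl)}$ constructed in Eq.~(\ref{2-2.18}) satisfies all the structural requirements imposed on the comparison process in hypothesis~(1) of Proposition~\ref{2-p2.2}; once this is done the theorem follows by quoting the two propositions.

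First I would record the elementary properties of $\ol{\Gm}{}^{(\dl)}$. By its definition in Eq.~(\ref{2-2.18}),
$$
    \ol{\Gm}{}_t^{(\dl)} =\exp\bigg( -\int_0^t \bigg[ \frac{C_{s-}^{(\dl)}}{\gm_{s-}}\bigg]^+ \frac{d\gm_s}{\gm_s}\bigg),
$$
so it is strictly positive and $\ol{\Gm}{}_0^{(\dl)}=1$. Since $\gm$ is increasing ($d\gm_s\geq 0$) and $\gm_s>0$, the integrand is nonnegative, hence the exponent is nonincreasing and $\ol{\Gm}{}^{(\dl)}$ is a decreasing predictable process. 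Finally, condition~(c) of Proposition~\ref{2-p2.3} asserts exactly that $\int_0^\infty [C_{t-}^{(\dl)}/\gm_{t-}]^+\,(d\gm_t/\gm_t)=\infty$ $(\Pas)$, which forces $\ol{\Gm}{}_t^{(\dl)}\to 0$ as $t\to\infty$ $(\Pas)$. Thus $\ol{\Gm}{}^{(\dl)}$ is positive, decreasing, equals $1$ at $t=0$ and vanishes in the limit, for every $\dl$ with $0<\dl<\dl_0/2$.

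Next, applying Proposition~\ref{2-p2.3} (whose hypotheses (a), (b), (c) are assumed here for every such $\dl$) yields $\Gm^{(\dl)}\simeq \ol{\Gm}{}^{(\dl)}$. Combined with the previous paragraph, this is precisely hypothesis~(1) of Proposition~\ref{2-p2.2}. The remaining hypothesis~(2) of that proposition consists of Eqs.~(\ref{2-2.14}) and (\ref{2-2.15}), which are among the standing assumptions of the theorem. Hence Proposition~\ref{2-p2.2} applies and delivers $\gm_t^\dl z_t\to 0$ as $t\to\infty$ $(\Pas)$ for all $\dl$, $0<\dl<\dl_0/2$, $0<\dl_0\leq 1$, as required.

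The argument is essentially a bookkeeping reduction, so I do not expect a serious obstacle. The one point needing care is the verification that $\ol{\Gm}{}^{(\dl)}\to 0$, since this is the single place where condition~(c) (rather than (a) or (b)) is used, and it is exactly what guarantees the divergence $L_\infty=\infty$ needed to invoke the Kronecker Lemma inside the proof of Proposition~\ref{2-p2.2}. Everything else — positivity, the normalization $\ol{\Gm}{}_0^{(\dl)}=1$, and monotonicity — is immediate from the exponential form of $\ol{\Gm}{}^{(\dl)}$ and the fact that $\gm$ is increasing.
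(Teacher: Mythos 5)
Your proof is correct and follows exactly the route the paper intends: the paper states the theorem as ``an immediate consequence of Propositions \ref{2-p2.2} and \ref{2-p2.3}'', and your contribution is simply to spell out the bookkeeping — that the explicit process $\ol{\Gm}{}^{(\dl)}$ from Eq.~(\ref{2-2.18}) is positive, decreasing, normalized at $0$, and tends to $0$ via condition~(c) — which is precisely what is needed for hypothesis~(1) of Proposition~\ref{2-p2.2}.
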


Consider in more detail two cases: (1) all the processes under the consideration are continuous; (2)~the discrete time case. In addition  assume that $M(t,u)=M(t)$ for all $u\in R^1$, $t\geq 0$.

In the case of continuous processes conditions (\ref{2-2.7}) and (\ref{2-2.8}) are satisfied trivially, the condition (\ref{2-2.6}) takes the form
\begin{equation}\label{2-2.19}
    \int_0^\infty \bigg[ \dl-\frac{\gm_t \bt_t(z_{t-})}{g_t} \bigg]^+
        \frac{d\gm_t}{\gm_t}<\infty \;\;(\Pas)
\end{equation}
and also
$$
    \{(\ref{2-2.19}) \;\;\text{for} \;\; \dl=\dl_0/2\}\Ra
        \{(\ref{2-2.19})\;\;\text{for all} \;\; \dl, \; 0<\dl<\dl_0/2\}.
$$

Further, since
$$
    \frac{C_t^{(\dl)}}{\gm_t} =\frac{1}{\gm_t} \int_0^t
        \frac{\bt_s(z_s)\gm_s}{g_s}\,d\gm_s-\dl\geq -\dl,
$$
the conditions (a)--(c) of Proposition \ref{2-p2.3} can be simplified to:

\begin{enumerate}
\item[(a$'$)] The process
$$
    \bigg(\frac{1}{\gm_t} \int_0^t \frac{\bt_s(z_s)\gm_s}{g_s}\,d\gm_s\bigg)_{t\geq 0}
$$
is eventually bounded.
\smallskip

\item[(b$'$)] $\ds \qquad
\int_0^\infty \bigg[ \frac{1}{\gm_t} \int_0^t \frac{\bt_s(z_s)\gm_s}{g_s}\,
    d\gm_s-\dl\bigg]^- \frac{d\gm_t}{\gm_t} <\infty \;\;(\Pas).$
\smallskip

\item[(c$'$)] $\ds \qquad
\int_0^\infty \bigg[ \frac{1}{\gm_t} \int_0^t \frac{\bt_s(z_s)\gm_s}{g_s}\,
    d\gm_s-\dl\bigg]^+ \frac{d\gm_t}{\gm_t} =\infty \;\;(\Pas). $ \\
\smallskip

Also, if (a$'$) holds and
\smallskip

\item[(bc$'$)] $\ds \qquad
\frac{C^{(\dl_0/2)}}{\gm} =\Bigg(\frac{1}{\gm_t} \int_0^t
    \frac{\bt_s(z_s)\gm_s}{g_s}\,d\gm_s -\frac{\dl_0}{2} \Bigg) _{t\geq 0} >0,$
    eventually, \\[2mm]
then (b$'$) and (c$'$) hold for each $\dl$, $0<\dl<\dl_0/2$.
\end{enumerate}

In the discrete time case we assume additionally that
\begin{equation}\label{2-2.20}
    \sum_{t\geq 0} \left( \frac{\Dl\gm_t}{\gm_t}\right)^2<\infty \;\;\;
        \text{and} \;\;\;
    \sum_{t\geq 0} (\bt_t(z_{t-1}))^2<\infty \;\;(\Pas).
\end{equation}

Then the conditions of Corollary \ref{2-c2.3} are trivially satisfied. Hence, the conditions (\ref{2-2.3}) and (\ref{2-2.11}) are equivalent and can be written as
\begin{equation}\label{2-2.21}
    \sum_{t\geq 0}  \left[ \dl-\frac{\gm_t\bt_t(z_{t-1})}{g_t}\right]^+
        \frac{\Dl\gm_t}{\gm_t}<\infty \;\;(\Pas)
\end{equation}
and also,
$$
    \left\{(\ref{2-2.21}) \;\;\text{for} \;\; \dl=\dl_0/2\right\} \Ra
        \left\{(\ref{2-2.21}) \;\;\text{for all} \;\; \dl, \;\;0<\dl<\dl_0/2\right\}.
$$

Note that the reverse implication ``$\Leftarrow$'' does not hold in general (see Example~\ref{2-e3}).

It is not difficult to verify that (a), (b) and (c) are equivalent to $(\wt{\rm a})$, $(\wt{\rm b})$ and $(\wt{\rm c})$ defined as follows.

\begin{enumerate}
\item[$(\wt{\rm a})$] The process
$$
    \bigg( \frac{1}{\gm_t} \sum_{s\leq t} \bt_s(z_{s-1})\gm_s\bigg)_{t\geq 0}
$$
is bounded eventually.
\smallskip

\item[$(\wt{\rm b})$] $\ds \quad
\sum_{t\geq 1} \bigg[ \frac{1}{\gm_{t-1}} \sum_{s<t} \bt_s(z_{s-1})\gm_s-\dl\bigg]^-
    \frac{\Dl\gm_t}{\gm_t}<\infty \;\; (\Pas).$
\smallskip

\item[$(\wt{\rm c})$] $\ds \quad
\sum_{t\geq 1} \bigg[ \frac{1}{\gm_{t-1}} \sum_{s<t} \bt_s(z_{s-1})\gm_s-\dl\bigg]^+
    \frac{\Dl\gm_t}{\gm_t}=\infty \;\; (\Pas).$ \\[2mm]
Also if $(\wt{\rm a})$ holds and
\smallskip

\item[$(\wt{\rm b}{\rm c})$] $\ds \quad
\bigg( \frac{1}{\gm_t} \sum_{s\leq t} \bt_s(z_{s-1})\gm_s-\dl\bigg)_{t\geq 0}>
    \dl_0/2\;$ eventually, \\[2mm]
then $(\wt{\rm b})$ and $(\wt{\rm c})$ hold for each $\dl$, $0<\dl<\dl_0/2$.
\end{enumerate}

Hence $\{(\wt{\rm a}),(\wt{\rm b}{\rm c})\}\Ra \{(\wt{\rm a}),(\wt{\rm b}),(\wt{\rm c})$ for all $\dl$, $0<\dl<\dl_0/2\}$. However, the inverse implication is not true (see Examples \ref{2-e3} and \ref{2-e4}).

Note that the conditions imposed on the martingale part of Eq. (\ref{2-1.1}) in Theorems \ref{2-t2.1} (see Eq. (\ref{2-2.4})) and \ref{2-t2.2} (see Eq. (\ref{2-2.15})) are identical. We, therefore, assume that these conditions hold in all examples given below.

\setcounter{example}{0}
\begin{example}\label{2-e1}
This example illustrates that Eq. (\ref{2-2.19}) holds whereas (a$'$) is violated.

Let
$$
    K_t=\gm_t=t+1 \;\;\;\text{and} \;\;\; \bt_t(u)\equiv (t+1)^{-(1/2+\al)},
$$
where $0<\al<1/2$.

Substituting $K_t$, $\gm_t$, $\bt_t$ in the left-hand side of Eq. (\ref{2-2.19}) we get
$$
    \int_0^\infty \big[ \dl-(t+1)^{-(1/2-\al)}(t+1)\big]^+ \frac{dt}{t+1} =
    \int_0^\infty \big[ \dl-(t+1)^{1/2-\al}\big]^+ \frac{dt}{t+1}\,.
$$
Since $([\dl-(t+1)^{1/2-\al}]^+)_{t\geq 0} =0$ eventually, the condition (\ref{2-2.19}) holds.

The conditions (a$'$) does not hold since
$$
    \frac{1}{\gm_t} \int_0^t \frac{\bt_s(z_s)\gm_s}{g_s}\,d\gm_s =\frac{1}{t+1}
        \int_0^t  (s+1)^{1/2-\al}ds\varpropto (t+1)^{1/2-\al}\to \infty
            \;\;\text{as} \;\;t\to \infty.
$$
Note that the conditions (b$'$) and (c$'$) are satisfied.

It should be pointed out that although Eq. (\ref{2-2.19}) holds for all $\dl$, $\dl>0$, if, e.g.,
$$
    d\la M\ra_t =\frac{dt}{(t+1)^{3/2+\al}}\,,
$$
the conditions (\ref{2-2.4}) only holds for $\dl$'s satisfying $0<\dl<\dl_0=1/2+\al$.
\end{example}

\begin{example}\label{2-e2}
In this example the conditions $(\wt{\rm a})$ and $(\wt{\rm b}{\rm c})$ hold for $\dl_0=1$ while Eq. (\ref{2-2.21}) fails for some $\dl$, $0<\dl<1/2=\dl_0/2$.

Consider a discrete time model with $K_t=\gm_t=t$, $\bt_t(u)\equiv \bt_t$ and
$$
    \bt_t\gm_t =\begin{cases}
                    1/2+a & \text{if} \;\; t \;\;\text{is odd}, \\
                    1/2-b & \text{otherwise},
                \end{cases}
$$
where $0<b<1/2\leq a$. Then, since
$$
    \frac{1}{2}+a>\frac{1}{\gm_t} \sum_{s\leq t} \bt_s\gm_s=\frac{1}{2}+
    \begin{cases}
        \frac{a-b}{2} & \text{if} \;\; t=2k, \;\; k=1,2,\dots \\
        \frac{k(a-b)+a}{2k+1} & \text{if} \;\; t=2k+1, \;\; k=1,2,\dots
    \end{cases} >\frac{1}{2}\,,
$$
the conditions $(\wt{\rm a})$ and $(\wt{\rm b}{\rm c})$ hold for $\dl_0=1$.

It is easy to verify that if $1/2-b<\dl<1/2$, then
$$
    \sum_{t\geq 1} [\dl-\bt_t\gm_t]^+\frac{1}{t} =\sum_{t\geq 1}
        \left[\dl-\frac{1}{2}+b\right]^+\frac{1}{t}\, I_{\{t \; \text{is even}\}}
        =\infty
$$
implying that Eq. (\ref{2-2.21}) does not hold for all $\dl$ with $1/2<b<\dl<1/2$.
\end{example}

\begin{example}\label{2-e3}
In this discrete time example $\dl_0=1$ and
$$
    \{(\ref{2-2.21}) \;\;\text{for all} \;\; \dl, \; 0<\dl<1/2\} \not\Ra
        \{(\ref{2-2.21}) \;\;\text{for} \;\; \dl=1/2\}.
$$

Suppose that $K_t=\gm_t=t$, $\bt_t(u)\equiv \bt_t$ and
$$
    \bt_t\gm_t=\left[\frac{1}{2} -\frac{1}{\log(t+1)}\right]^+.
$$

Then for $0<\dl<1/2$ and large $t$'s,
$$
    [\dl-\bt_t\gm_t]^+=0
$$
and it follows that
$$
    \sum_{t\geq 1} [\dl-\bt_t\gm_t]^+\frac{1}{t}<\infty.
$$

But for $\dl=1/2$,
$$
    \sum_{t\geq 1} \left[\frac{1}{2}-\bt_t\gm_t\right]^+\frac{1}{t} \geq
    \sum_{t\geq 1} \frac{1}{t\log(t+1)}\,I_{\{\log(t+1)>1\}} =\infty.
$$

Note also that by the Toeplitz Lemma,
$$
    \frac{1}{t} \sum_{s\leq t} \bt_s\gm_s =\frac{1}{t} \sum_{s\leq t}
        \left[\frac{1}{2} -\frac{1}{\log(s+1)}\right]^+ \uparrow \frac{1}{2}
            \;\;\;\text{as} \;\;\; t\to \infty.
$$

Therefore, for all $\dl$, $0<\dl<1/2$, the conditions $(\wt{\rm a})$, $(\wt{\rm b})$ and $(\wt{\rm c})$ hold whereas $(\wt{\rm b}{\rm c})$ does not.
\end{example}

\begin{example}\label{2-e4}
This is a discrete time example illustrating that Eq. (\ref{2-2.21}) holds for $\dl=1/2$ (hence for all $0<\dl<1/2$) and for all $\dl$, $0<\dl<1/2$, the conditions $(\wt{\rm a})$, $(\wt{\rm b})$ and $(\wt{\rm c})$ hold whereas $(\wt{\rm b}{\rm c})$ does not.

Suppose that $K_t=\gm_t=t$, $\bt_t(u)\equiv \bt_t$ and for $t>0$,
$$
    \bt_t\gm_t=\frac{1}{2}-\frac{1}{t}\,.
$$

Then for $\dl=1/2$ the condition (\ref{2-2.21}) follows since
$$
    \sum_{t> 2} \left[\frac{1}{2} -\bt_t\gm_t\right]^+ \frac{1}{t}=
    \sum_{t>2} \frac{1}{t^2} <\infty.
$$

It remains to note that
$$
    \frac{1}{t} \sum_{s\leq t} \bt_s\gm_s \uparrow \frac{1}{2}
$$
by the Toeplitz Lemma.
\end{example}

\begin{example}\label{2-e5}
Here we drop the ``traditional'' assumptions
$$
    \sum_{t>0} \left(\frac{\Dl \gm_t}{\gm_t}\right)^2 <\infty \;\;\;\text{and} \;\;\;
        \sum_{t\geq 0} (\bt_t(z_{t-1}))^2<\infty \;\;(\Pas)
$$
and give an example when the conditions of Theorems \ref{2-t2.1} and \ref{2-t2.2} are satisfied.

Suppose that $K_t=t$ and process $\gm$ and $\bt(u)=\bt$ are defined as follows: $\gm_1=1$,
$$
    \gm_t=\sum_{s=1}^t q^s =\frac{q}{1-q}\,(1-q^t), \;\;\;\text{where} \;\;
        q>1,
$$
and
$$
    \bt_t=\frac{\al}{\bt}\,\frac{\Dl\gm_t}{\gm_t}\,,
$$
where $\al=q/(q-1)$ and $\bt$, $\bt>1$, are some constants satisfying $(1-1/\al)^{1/2}>1-1/\bt$. In this case,
$$
    \frac{\Dl\gm_t}{\gm_t}\to \frac{1}{\al} \;\;\;\text{as} \;\;\; t\to \infty
$$
and
$$
    \bt_t\Dl K_t=\frac{\al}{\bt} \,\frac{\Dl\gm_t}{\gm_t} \to \frac{1}{\bt} <1 \;\;\;
        \text{as} \;\;\; t\to \infty.
$$

Therefore the conditions of Corollary \ref{2-c2.3} hold and it follows that the conditions (\ref{2-2.3}) and (\ref{2-2.11}) are equivalent.

To check Eq. (\ref{2-2.11}) note that for all $0<\dl<1/2$,
\begin{align*}
    & \sum_{t>0} \Bigg[ 1-\bt_t(z_{t-}) \Dl K_t -
        \bigg( 1-\frac{\Dl\gm_t}{\gm_t} \bigg)^\dl\Bigg]^+
        I_{\{\bt_t(z_{t-})\Dl K_t\leq 1\}} \\
    & \quad \leq \sum_{t>0} \Bigg[ 1-\bt_t(z_{t-}) \Dl K_t -
        \bigg( 1-\frac{\Dl\gm_t}{\gm_t} \bigg)^{1/2}\Bigg]^+
        I_{\{\bt_t(z_{t-})\Dl K_t\leq 1\}} \\
    & \quad \leq \sum_{t>0} \Bigg[ 1-\frac{1}{\bt}\,\frac{q^t}{q^t-1} -
        \bigg( 1-\frac{1}{\al}\,\frac{q^t}{q^t-1} \bigg)^{1/2}\Bigg]^+
        I_{\{\bt_t(z_{t-})\Dl K_t\leq 1\}} .
\end{align*}

But since
$$
    1-\frac{1}{\bt}\,\frac{q^t}{q^t-1} -
        \bigg( 1-\frac{1}{\al}\,\frac{q^t}{q^t-1} \bigg)^{1/2} \to
        1-\frac{1}{\bt} -\left(1-\frac{1}{\al}\right)^{1/2} <0,
$$
we have
$$
    \Bigg[ 1-\frac{1}{\bt}\,\frac{q^t}{q^t-1} -
        \bigg( 1-\frac{1}{\al}\,\frac{q^t}{q^t-1} \bigg)^{1/2}\Bigg]^+=0
$$
for large $t$'s. Hence Eq. (\ref{2-2.11}) holds.

To  check the conditions (a), (b) and (c) of Theorem \ref{2-t2.2} note that by the Toeplitz Lemma,
$$
    \frac{1}{\gm_t} \,C_t^{(\dl)} =-\frac{1}{\gm_t} \sum_{s\leq t} \Dl \gm_s
        \log\frac{|1-\bt_s(z_{s-1})|}{\big(1-\frac{\Dl\gm_s}{\gm_s}\big)^\dl}\,
        \frac{\gm_s}{\Dl \gm_s} \to a,
$$
where
$$
    a=-\al\log \frac{1-1/\bt}{(1-1/\al)^\dl} >-\al\log
        \frac{1-1/\bt}{(1-1/\al)^{1/2}}>0,
$$
which implies (a), (b) and (c).
\end{example}

\subsection{Asymptotic expansion}\label{2-s3}

In subsection \ref{2-s1} we have derived  the representation
\begin{equation}\label{2-3.1}
    \chi_tz_t=\frac{L_t}{\la L\ra_t^{1/2}} +R_t,
\end{equation}
where all objects are defined there.

Throughout this subsection we assume that
$$
    \la L\ra_\infty =\infty \;\;(\Pas)
$$
and there exists a predictable increasing process $\gm=(\gm_t)_{t\geq 0}$ such that $\gm_0=1$, $\gm_\infty=\infty$ $(\Pas)$, the process $\gm/\gm_-$ is eventually bounded and
$$
    \gm\simeq \Gm^2\la L\ra^{-1}.
$$

In this subsection, assuming that $\gm_t^\dl z_t\to 0$ $\Pas$ for all $0<\dl<\dl_0/2$ (for some $0<\dl_0\leq 1$), we establish  sufficient conditions for the convergence $R_t\str{P}{\to} 0$ as $t\to \infty$.

Consider the following conditions:

\begin{enumerate}
\item[(d)] There exists a non-random increasing process $(\la\la L\ra\ra_t)_{t\geq 0}$ such that
$$
    \frac{\la L\ra_t}{\la\la L\ra\ra_t} \str{d}{\to} \zt \quad \text{as} \quad t\to \infty,
$$
where $\str{d}{\to}$ denotes the convergence in distribution and $\zt>0$ is some random variable.
\smallskip

\item[(e)] $\ds \qquad
\sum_{t\geq 0} I_{\{\bt_t \Dl K_t=1\}}<\infty \;\; (\Pas).$
\smallskip

\item[(f)] There exists $\ve$, $1/2-\dl_0/2<\ve<1/2$, such that
$$
    \frac{1}{\la L\ra_t} \int_0^t |\bt_s-\bt_s(z_{s-})| \gm_{s-}^\ve
        \la L\ra_s dK_s\to 0 \;\;\text{as} \;\; t\to \infty \;\;(\Pas).
$$
\smallskip

\item[(g)] $\ds
\frac{1}{\la L\ra_t} \int_0^t \Gm_s^2(h_s(z_{s-},z_{s-})-2h_s(z_{s-},0)+
    h_s(0,0)) dK_s \str{P}{\to} 0 \;\;\text{as} \;\; t\to \infty.$
\end{enumerate}

\begin{theorem}\label{2-t3.1}
Suppose that $\gm_t^\dl z_t\to 0$ $\Pas$ for all $\dl$, $0<\dl<\dl_0/2$ $(0<\dl_0\leq 1)$, and the conditions {\rm (d)--(g)} are satisfied. Then
$$
    R_t\str{P}{\to} 0 \;\;\;\text{as} \;\;\; t\to \infty.
$$
\end{theorem}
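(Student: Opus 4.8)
The plan is to use the decomposition of $R_t$ derived in subsection~\ref{2-s1},
$$R_t=\frac{z_0}{\la L\ra_t^{1/2}}+\frac{1}{\la L\ra_t^{1/2}}\int_0^t\Gm_s\,d\ol R_s,\qquad \ol R=\ol{R}{}^{(1)}+\ol{R}{}^{(2)}+\ol{R}{}^{(3)},$$
and to treat the four summands separately, showing that the first three converge to $0$ $\Pas$ and the last converges to $0$ in probability. Since $\la L\ra_\infty=\infty$ $\Pas$, the term $z_0\la L\ra_t^{-1/2}\to0$ $\Pas$ at once. For $\ol{R}{}^{(1)}$, condition (e) gives $\sum_tI_{\{\bt_t\Dl K_t=1\}}<\infty$ $\Pas$, so $\sum_{s\le t}\Gm_sz_{s-}I_{\{\bt_s\Dl K_s=1\}}$ has finitely many nonzero terms and is eventually constant; dividing by $\la L\ra_t^{1/2}\to\infty$ gives convergence to $0$ $\Pas$.

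The analytic core is $\ol{R}{}^{(2)}$, i.e. $\la L\ra_t^{-1/2}\int_0^t\Gm_s(\bt_s-\bt_s(z_{s-}))z_{s-}\,dK_s$. I would use $\gm\simeq\Gm^2\la L\ra^{-1}$ in the form $|\Gm_s|\le c\,\gm_s^{1/2}\la L\ra_s^{1/2}$ eventually, together with the eventual boundedness of $\gm/\gm_-$, which gives $\gm_s^{1/2}\le c'\gm_{s-}^{1/2}$. Setting $\dl=1/2-\ve$, which lies in $(0,\dl_0/2)$ precisely because (f) assumes $1/2-\dl_0/2<\ve<1/2$, the hypothesis $\gm^\dl z\to0$ yields $\kappa_s:=\gm_{s-}^{\dl}|z_{s-}|\to0$ $\Pas$, so $|\Gm_sz_{s-}|\le cc'\gm_{s-}^{\ve}\la L\ra_s^{1/2}\kappa_s$. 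Hence the $\ol{R}{}^{(2)}$ summand is dominated in absolute value by $cc'\,\la L\ra_t^{-1/2}\int_0^t\la L\ra_s^{1/2}\kappa_s\,d\Lambda_s$, where $d\Lambda_s=|\bt_s-\bt_s(z_{s-})|\gm_{s-}^{\ve}\,dK_s$, and condition (f) reads $\la L\ra_t^{-1}\int_0^t\la L\ra_s\,d\Lambda_s\to0$.

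To finish this term I would prove a Kronecker-type power-transfer: for $C_t:=\la L\ra_t\ge1$ increasing to $\infty$ and $\Lambda$ increasing, $C_t^{-1}\int_0^tC_s\,d\Lambda_s\to0$ implies $C_t^{-1/2}\int_0^tC_s^{1/2}\,d\Lambda_s\to0$. Writing $U_t=\int_0^tC_s\,d\Lambda_s=o(C_t)$ and $d\Lambda_s=C_s^{-1}\,dU_s$, integration by parts gives $\int_0^tC_s^{1/2}\,d\Lambda_s=C_t^{-1/2}U_t+\int_0^tU_{s-}\,|d(C_s^{-1/2})|$, and using $U_s\le\eta C_s$ eventually together with $\int_0^tC_{s-}\,|d(C_s^{-1/2})|=O(C_t^{1/2})$ and then $\eta\to0$ proves the claim. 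Since $\kappa_s\to0$ is bounded, the dominating expression for $\ol{R}{}^{(2)}$ then tends to $0$ $\Pas$. I expect this power-transfer to be the main obstacle: the bound $\int_0^tC_{s-}\,|d(C_s^{-1/2})|=O(C_t^{1/2})$ is transparent for the continuous part of $\la L\ra$, but for the jump part it requires the convexity of $x\mapsto x^{-1/2}$ and genuine care when the jumps of $\la L\ra$ are large, so matching the power $1/2$ of $\la L\ra$ against the $\la L\ra$ appearing in (f) is the delicate step.

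For $\ol{R}{}^{(3)}$ the process $N_t:=\int_0^t\Gm_s(M(ds,z_{s-})-M(ds,0))$ lies in $\cM_{\loc}^2(P)$ and, by bilinearity of $\la\cdot,\cdot\ra$, has $\la N\ra_t=\int_0^t\Gm_s^2(h_s(z_{s-},z_{s-})-2h_s(z_{s-},0)+h_s(0,0))\,dK_s$, so condition (g) is precisely $\la N\ra_t\la L\ra_t^{-1}\str{P}{\to}0$. Since the normaliser $\la L\ra_t$ is random, I would first replace it by the non-random $\la\la L\ra\ra_t$ of condition (d): applying Lenglart's inequality to $N^2$ (dominated by $\la N\ra$) with $\lambda=a\,\la\la L\ra\ra_t$ and $\eta=b\,\la\la L\ra\ra_t$ gives $P(\sup_{s\le t}N_s^2/\la\la L\ra\ra_t\ge a)\le b/a+P(\la N\ra_t/\la\la L\ra\ra_t\ge b)$, and writing $\la N\ra_t/\la\la L\ra\ra_t=(\la N\ra_t/\la L\ra_t)(\la L\ra_t/\la\la L\ra\ra_t)$ shows the last probability tends to $0$ (first factor $\str{P}{\to}0$ by (g), second tight by (d)); letting $b\to0$ yields $N_t/\la\la L\ra\ra_t^{1/2}\str{P}{\to}0$. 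Finally $N_t/\la L\ra_t^{1/2}=(N_t/\la\la L\ra\ra_t^{1/2})(\la\la L\ra\ra_t/\la L\ra_t)^{1/2}$ is the product of a sequence tending to $0$ in probability and a tight one (by (d), as $\zt>0$), hence $\str{P}{\to}0$. Combining the four contributions gives $R_t\str{P}{\to}0$.
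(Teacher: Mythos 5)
Your proposal follows the paper's own proof essentially step for step: the same four-term decomposition of $R_t$, the same use of (e) for $R^{(1)}$, the same bound $|\Gm_s z_{s-}|\leq \const\cdot\gm_{s-}^{\ve}\la L\ra_s^{1/2}$ combined with integration by parts and the Toeplitz lemma for $R^{(2)}$ (your ``power-transfer'' lemma is exactly the paper's inline computation with $D=\la L\ra^{-1/2}$ and $C^{\ve}=\int\la L\ra_s\,d\Lambda_s$), and the Lenglart--Rebolledo inequality together with (d) and (g) for $R^{(3)}$. The step you flag as delicate is in fact harmless: $C_{s-}\,|d(C_s^{-1/2})|=(C_{s-}^{1/2}/C_s^{1/2})\,dC_s^{1/2}\leq dC_s^{1/2}$ no matter how large the jumps of $\la L\ra$ are.
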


\begin{proof}
Recall from subsection \ref{2-s1} that
$$
    R_t=\frac{1}{\la L\ra_t^{1/2}} \,z_0+R_t^{(1)}+R_t^{(2)}+R_t^{(3)},
$$
where
\begin{align*}
    R_t^{(1)} & =-\frac{1}{\la L\ra_t^{1/2}}
        \sum_{s\leq t} \Gm_sz_{s-} I_{\{\bt_s \Dl K_s=1\}}, \\
    R_t^{(2)} & =\frac{1}{\la L\ra_t^{1/2}}
        \int_0^t \Gm_s(\bt_s-\bt_s(z_{s-}))z_{s-} dK_s, \\
    R_t^{(3)} & =\frac{1}{\la L\ra_t^{1/2}}
        \int_0^t \Gm_s(M(ds,z_{s-})-M(ds,0)).
\end{align*}

Since $\la L\ra_t \to \infty$, we have $z_0/\la L\ra_t^{1/2} \to 0$ as $t\to \infty$. Further, it follows from (e) that the process $(I_{\{\bt_t \Dl K_t=1\}})_{t\geq 0}=0$ eventually and therefore $R_t^{(1)}\to 0$ as $t\to \infty$.

Since the process $\gm/\gm_-$ is bounded eventually and $\gm_t^{1/2-\ve}z_t\to 0$ as $t\to \infty$ $(\Pas)$, we obtain that the process $\gm^{1/2-\ve}z_-$ is bounded eventually for each $\ve$, $1/2-\dl_0/2<\ve<1/2$. Also, $|\Gm|\la L\ra^{-1/2}\simeq \gm^{1/2}$. It therefore follows that there exists an eventually bounded positive process $\eta=(\eta_t)_{t\geq 0}$ such that
\begin{align*}
    |R_t^{(2)}| & \leq \frac{1}{\la L\ra_t^{1/2}} \int_0^t  |\Gm_s|\,
        |\bt_s-\bt_s(z_{s-})|\,|z_{s-}| \,dK_s \\
    & =\frac{1}{\la L\ra_t^{1/2}} \int_0^t |\bt_s-\bt_s(z_{s-})|
        \gm_s^\ve \la L\ra_s\eta_s \,
        \frac{dK_s}{\la L\ra_s^{1/2}} =
        \frac{1}{\la L\ra_t^{1/2}}\int_0^t D_s d C_s^\ve,
\end{align*}
where
$$
    D_t=\frac{1}{\la L\ra_t^{1/2}}, \quad
    C_t^\ve =\int_0^t |\bt_s-\bt_s(z_{s-})|\gm_s^\ve\la L\ra_s\eta_s dK_s.
$$

Using the formulae $d(D_tC_t)=D_t dC_t+C_{t-}d D_t$ we obtain
$$
    |R_t^{(2)}| \leq \Bigg( \frac{1}{\la L\ra_t}\,C_t^\ve -
        \frac{1}{\la L\ra_t^{1/2}} \int_0^t C_{s-}^\ve d\la L\ra_s^{-1/2} \Bigg).
$$

It is easy to check that
$$
    d(\la L\ra_t^{-1/2})=-\frac{1}{\la L\ra_{t-}^{1/2}}\,
        \frac{d\la L\ra_t^{1/2}}{\la L\ra_t^{1/2}}
$$
and
$$
    |R_t^{(2)}| \leq \frac{1}{\la L\ra_t}\,C_t^\ve +
        \frac{1}{\la L\ra_t^{1/2}} \int_0^t \frac{1}{\la L\ra_{s-}}
            C_{s-}^\ve d\la L\ra_s^{1/2}.
$$

Now, from the condition (f) and the Toeplitz Lemma, $R_t^{(2)}\to 0$, $\Pas$

To prove the convergence $R_t^{(3)}\to 0$ note that by the condition (d), it suffices only to consider the case when $\la L\ra_t$ is non-random. Denote
$$
    N_t=\int_0^t \Gm_s(M(ds,z_{s-})-M(ds,0)).
$$

Using the Lenglart--Rebolledo inequality (see, e.g., \cite{proc12}, Ch. 1, Section 9, \cite{22n}) we obtain
\begin{align*}
    P\{\la L\ra_t^{-1/2}N_t>a\} & =P\{\la L\ra_t^{-1} N_t^2>a^2\}=
        P\{N_t^2-\la L\ra_t\ve >(a^2-\ve)\la L\ra_t\} \\
    & \leq \frac{b}{(a^2-\ve)\la L\ra_t} +P\{\la N\ra_t-\la L\ra_t\ve>b\}
\end{align*}
for any $a>0$, $b>0$ and $0<\ve<a^2$. The result now follows since $\la L\ra_\infty=\infty$ $\Pas$ and
\begin{gather*}
    \frac{1}{\la L\ra_t} \,\la N\ra_t =\frac{1}{\la L\ra_t}
        \int_0^t \Gm_s^2(h_s(z_{s-},z_{s-})-2h_s(z_{s-},0)+
            h_s(0,0)) dK_s \str{P}{\to} 0 \\
    \text{as} \;\; t\to \infty. \qedhere
\end{gather*}
\end{proof}

\begin{remark}\label{2-r3.1}
Suppose that $\Pas$,
$$
    \bt\circ K_\infty =\infty, \quad
        \inf_{t\geq 0} \bt_t I_{\{\Dl K_t\neq 0\}}>0, \quad
        \sup_{t\geq 0} \bt_t \Dl K_t I_{\{\Dl K_t\neq 0\}}<2.
$$
Then, as it is easy to see, $|\Gm|$ is an increasing process with $|\Gm_\infty|=\infty$ $(\Pas)$.
\end{remark}

\begin{remark}\label{2-r3.2}
\

\begin{enumerate}
\item[1.] The condition (f) can be replaced by the following one:
(f$'$) there exists $\ve>(1-\dl_0)/\dl_0$ such that
$$
    \frac{1}{\la L\ra_t} \int_0^t |\bt_s-\bt_s(z_{s-})|\,|z_{s-}|^{-\ve} \la L\ra_s dK_s
        \to 0 \;\;\text{as} \;\; t\to \infty \;\;(\Pas).
$$

\item[2.] It follows from Eq. (\ref{2-3.1}) that under the conditions of Theorem \ref{2-t3.1} the asymptotic behaviour of the normalized process $(\chi_tz_t)_{t\geq 0}$ coincides  with the asymptotic behaviour of $(L_t/\la L\ra_t)_{t\geq 0} $ as $t\to \infty$.

\item[3.] Assume that the first two conditions in Remark \ref{2-r3.1} hold and besides,
$$
    \sup_{t\geq 0} \bt_t \Dl K_t I_{\{\Dl K_t\neq 0\}}<1 \;\;(\Pas).
$$
In this case, $\ol{\bt}_t=\bt_t I_{\{\bt_t\Dl K_t\neq 1\}}=\bt_t$, $\Gm=\ve^{-1}(-\bt\circ K)$ is a positive increasing process, $\Gm_t\uparrow \infty$ $(\Pas)$ as $t\to \infty$ and if we suppose that $\Gm\simeq \la L\ra$, then taking $\gm=\la L\ra$ we obtain
$$
    \gm\simeq \Gm^2 \la L\ra^{-1}\simeq \Gm
$$
and under the conditions of Theorem \ref{2-t3.1},
$$
    \Gm_t^{1/2} z_t =\frac{L_t}{\la L\ra_t^{1/2}} +R_t, \quad
        R_t\str{P}{\to} 0 \;\;\;\text{as} \;\;\; t\to \infty.
$$
\end{enumerate}
\end{remark}

Note that for the recursive  parametric estimation procedures in the discrete time case, $\Gm^2\la L\ra^{-1} =\Gm$ (see \cite{20}).

\begin{example}\label{2-e6}
The RM stochastic approximation procedure with slowly varying gains (see \cite{2-19}).

Consider the SDE
$$
    dz_t=-\frac{\al R(z_t)}{(1+K_t)^r}\,dK_t+\frac{\al}{(1+K_t)^r} \, dm_t.
$$
Here $K=(K_t)_{t\geq 0}$ is a continuous and increasing non-random function with $K_\infty=\infty$, $1/2<r<1$, $0<\al<1$, $m=(m_t)_{t\geq 0}\in \cM_{\loc}^2(P)$, $d\la m\ra_t=\sg_t^2 d K_t$, $\sg_t^2\to \sg^2>0$ as $t\to \infty$ and non-random regression function $R$ satisfies the following conditions:
$$
    R(0)=0, \quad uR(u)>0 \;\;\;\text{if} \;\;\; u\neq 0,
$$
for each $\ve>0$ $\inf\limits_{\ve<|u|<\frac{1}{\ve}} u R(u)>0$ and
$$
    R(u)=\bt u+\cv(u) \;\;\; \text{with} \;\;\; \cv(u)=O(u^2) \;\;
        \text{as} \;\; u\to 0.
$$

In our notation,
$$
    \bt_t=\frac{\al\bt}{(1+K_t)^r} \quad \text{and} \quad
    \bt_t(u)=\frac{\al R(u)}{u(1+K_t)^r}\,.
$$

It follows from Theorem \ref{t3.1} that $\Pas$
$$
    z_t\to 0 \;\;\;\text{as} \;\;\; t\to \infty.
$$

From subsection \ref{2-s1}
$$
    \chi_t z_t=\frac{L_t}{\la L\ra_t^{1/2}}+R_t,
$$
with $\Gm_t=\ve_t^{-1} (-\bt\circ K)$,
$$
    L_t=\int_0^t \Gm_s \,\frac{\al}{(1+K_s)^r}\,d m_s, \quad
        \chi_t^2 =\Gm_t^2\la L\ra_t^{-1}
$$
and
$$
    R_t=\frac{1}{\la L\ra_t^{1/2}} \int_0^t \Gm_s(\bt_s-\bt_s(z_{s-}))
        z_{s-} dK_s+\frac{z_0}{\la L\ra_t^{1/2}}\,.
$$

On can check that
$$
    (1+K_t)^{-r} \chi_t^2 \to \frac{2\bt}{\al\sg^2}
$$
as $t\to \infty$. Since
$$
    \frac{L_t}{\la L\ra_t^{1/2}} \str{w}{\to} \cN(0,1),
$$
if the convergence $R_t\str{P}{\to} 0$ holds, then
\begin{equation}\label{2-3.2}
    (1+K_t)^{r/2} z_t \str{w}{\to} \cN\left(0,\frac{\al\sg^2}{2\bt}\right).
\end{equation}

It remains to prove that $R_t\str{P}{\to} 0$ as $t\to \infty$. Let us first prove that if $1/2<r<1$, then $\Pas$,
\begin{equation}\label{2-3.3}
    (1+K_t)^{r\dl} z_t\to 0 \;\;\;\text{for all} \;\;\; \dl<1-\frac{1}{2r}\,.
\end{equation}

It is easy to verify that
$$
    (1+K_t)^{2r\dl} =\ve_t^{-1} \left(-\frac{2r\dl}{(1+K)} \circ K\right).
$$

Therefore, the conditions (\ref{2-2.3}) and (\ref{2-2.4}) of Theorem \ref{2-t2.1} can be rewritten as
\begin{equation}\label{2-3.4}
    \int_0^\infty \left[ \frac{2r\dl}{(1+K_t)} -\frac{2\al\bt}{(1+K_t)^r} -
        \frac{2\al \cv(z_t)}{z_t(1+K_t)^r}\right]^+ dK_t<\infty
        \;\;(\Pas)
\end{equation}
and
\begin{equation}\label{2-3.5}
    \int_0^\infty (1+K_t)^{2r\dl} \,\frac{\al^2\sg_t^2}{(1+K_t)^{2r}}\,dK_t<\infty
        \;\;(\Pas).
\end{equation}

The condition (\ref{2-3.4}) holds since
$$
    \left[ \frac{2r\dl}{(1+K_t)} -\frac{2\al\bt}{(1+K_t)^r} -
        \frac{2\al \cv(z_t)}{z_t(1+K_t)^r}\right]^+=0
$$
eventually. The condition (\ref{2-3.5}) is satisfied since $2r-2r\dl>1$ if $\dl<1-1/(2r)$. So, Theorem \ref{2-t2.1} yields Eq. (\ref{2-3.3}). The conditions (d) and (e) of Theorem \ref{2-t3.1} are trivially fulfilled. To check (f) note that from the Kronecker Lemma it suffices to verify that
$$
    \int_0^\infty |\bt_t-\bt_t(z_t)| \gm_t^\ve d K_t <\infty \;\;(\Pas)
$$
for some $\ve$ with $1/2-\dl_0/2<\ve<1/2$, $\dl_0=2-1/r$. For each $\dl$, $0<\dl<\dl_0/2=1-1/(2r)$, we have
\begin{align*}
    \int_0^\infty |\bt_t-\bt_t(z_t)| \gm_t^\ve d K_t & =
        \int_0^\infty \frac{|\cv(z_t)|}{|z_t|^2} \,|z_t|\gm_t^\ve(1+K_t)^{-r}dK_t \\
    & \leq \xi\int_0^\infty (1+K_t)^{-r(a+\dl-\ve)} dK_t
\end{align*}
for some random variables $\xi$. It therefore follows that if there exists a triple $r$, $\dl$, $\ve$ satisfying inequalities
\begin{gather*}
    \frac{1}{2}<r<1, \quad 0<\dl<\frac{1}{2}\,, \quad \ve>0, \quad
        r(1+\dl-\ve)>1, \\
    \frac{1}{2r}-\frac{1}{2}<\ve<\frac{1}{2}\,, \quad \dl<1-\frac{1}{2r}\,,
\end{gather*}
then Eq. (\ref{2-3.2}) holds. It is easy to verify that such a triple exists only for $r>4/5$. It therefore follows that Eq. (\ref{2-3.2}) holds for $r>4/5$.
\end{example}


\section{The Polyak Weighted Averaging Procedure}

\subsection{Preliminaries} \label{proc-s1}
Consider the RM type SDE
\begin{equation}\label{proc1.1}
    z_t=z_0+\int_0^t H_s(z_s)\,dK_s+\int_0^t \ell_s(z_s)\,dm_s,
\end{equation}
where

(1) $\{H_t(u),\;t\geq 0,\;u\in R^1\}$ is a random field described in Section 0;

        (2)$\{M(t,u),\;t\geq 0,\;u\in R^1\}$ is  a random field  such that
$$  M(u)=(M(t,u))_{t\geq 0}\in M_{\loc}^2(P)  $$
for each $u\in R^1$ and $M(t,u)=
\int\limits_0^t\ell_s(u)\,dm_s$, where $m=(m_t)_{t\geq 0}\in M_{\loc}^{2,c}(P)$,
$M(\cdot, 0)\neq 0$; $\ell(u)=(\ell_t(u))_{t\geq 0}$ is a predictable process
for each $u\in R^1$. Denote $\ell_s: =\ell_s(0)$.

        (3) $K=(K_t)_{t\geq 0}$ is a continuous increasing process.

        Suppose this equation has a unique strong solution $z=(z_t)_{t\geq 0}$ on the
whole time interval $[0,\infty)$, such that
$$  \big(M(t)\big)_{t\geq 0}=\bigg(\int_0^t\ell_s(z_s)\,
        dm_s\bigg)_{t\geq 0}\in M_{\loc}^{2,c}(P).         $$

In Section 1 the conditions were established
which guarantee the convergence
\begin{equation}\label{proc1.2}
    z_t\to 0,\;\;\;\text{as}\;\;\;t\to\infty\;\;\; \Pas
\end{equation}

In Section 2, assuming (\ref{proc1.2}) the conditions were stated under which the following
property of $z=(z_t)_{t\geq 0}$ takes place:

        (a) for each $\dl$, $0<\dl<\dl_0$, $0<\dl_0\leq 1$
$$  \gm_t^\dl z_t^2\to 0,\;\;\;\text{as}\;\;\;t\to\infty\;\;\; \Pas
$$
where $\gm=(\gm_t)_{t\geq 0}$ is a predictable increasing process with
$\gm_0=1$, $\gm_\infty=\infty$ $\Pas$

        Further, assuming that $z=(z_t)_{t\geq 0}$ has property (a) with the
process $\gm=(\gm_t)_{t\geq 0}$, equivalent to the process
$\Gm^2\langle L\rangle^{-1}\!\!=\!\!(\Gm_t^2\langle L\rangle_t^{-1})_{t\geq 0}$
(i.e., $\us{t\to\infty}{\lim}\!\frac{\Gm_t^2\langle L\rangle_t^{-1}}{\gm_t}=
\wt{\gm}^{-1}$, $0<\wt{\gm}<\infty$), in Section 2 the conditions were
established under which the asymptotic expansion
\begin{equation}\label{proc1.3}
    \Gm_t\langle L\rangle_t^{1/2}z_t=\frac{L_t}{\langle L
        \rangle_t^{1/2}}+R_t,
\end{equation}
where $R_t \os{P}{\to} 0$ as $t\to\infty$, holds true.

        Here the objects $\gm_t$, $L_t$, $\langle L\rangle_t$ are defined as
follows:
$$  \Gm_t=\ve_t(\bt\circ K):=\exp\bigg(\int_0^t\bt_s \,dK_s\bigg),    $$
where $\bt_t=-H_t'(0)$, $L_t=\int\limits_0^t\Gm_s \ell_s(0)\,dm_s$, $\langle L\rangle$
is the shifted square cha\-rac\-te\-ris\-tics of $L$, i.e.,
$\langle L\rangle_t=1+\langle L\rangle_t^{F,P}$, where $\la L\ra_t^{F,P} =\int\limits_0^t \Gm_s^2 \ell_s^2 d K_s$.

        Consider now the following weighted averaging procedure:
\begin{equation}\label{proc1.4}
    \ol{z}_t=\frac{1}{\ve_t(g\circ K)}\int_0^t z_s \,d\ve_s(g\circ K),
\end{equation}
where $g=(g_t)_{t\geq 0}$ is a predictable process, $g_t\geq 0$ for all
$t\geq 0$, $\Pas$, $\ve_t=\ve_t(g\circ K)=\exp\big(\int\limits_0^t g_s \,dK_s\big)$,
$\int\limits_0^t g_s\,dK_s<\infty$, $t\ge 0$,
$\int\limits_0^\infty g_s\,dK_s=\infty$ $\Pas$

        The aim of this section is to study the asymptotic properties of
the process $\ol{z}=(\ol{z}_t)_{t\geq 0}$, as $t\to\infty$.

        First it should be noted that if $z_t\to 0$ as $t\to\infty$ $\Pas$,
then by the Toeplitz lemma (see, e.g., \cite{proc12}) it immediately follows that
$$  \ol{z}_t\to 0,\;\;\text{as}\;\;t\to\infty,\;\; P-\text{a.s}.    $$

        In subsection \ref{proc-s2} we establish asymptotic distribution of the process
$\ol{z}$ in the ''linear'' case, when $H_t(u)=-\bt_t u$, $M(t,u)\equiv M(t)=
\int\limits_0^t\ell_s \,dm_s$, with deterministic $g,\bt,\ell$ and $K$, and
$d\langle m\rangle_t=dK_t$.

The general case, i.e., when the process $z$ in (\ref{proc1.4}) is the strong solution
of SDE (\ref{proc1.1}), is considered in subsection \ref{proc-s3}.

\subsection{Asymptotic properties of $\ol{z}$. ``Linear'' Case}\label{proc-s2}

        In this subsection we consider the ``linear'' case, when SDE (\ref{proc1.1}) is
of the form
\begin{equation}\label{proc2.1}
    dz_t=-\bt_t z_t \,dK_t+\ell_t \,dm_t,\;\;z_0,
\end{equation}
where $K=(K_t)_{t\geq 0}$ is a deterministic increasing function,
$\bt=(\bt_t)_{t\geq 0}$ and $\ell=(\ell_t)_{t\geq 0}$ are deterministic
functions, $\bt_t\geq 0$ for all $t\geq 0$, $\int\limits_0^\infty\bt_s dK_s=\infty$,
$\int\limits_0^t\bt_s dK_s<\infty$, for all $t\geq 0$ and $\int\limits_0^\infty
\ell_s^2 dK_s<\infty$.

        Define the following objects:
$$  \Gm_t=\exp\bigg(\int_0^t \bt_s \,dK_s\bigg),\;\;
        L_t=\int_0^t \Gm_s\ell_s \,dm_s,\;\;t\geq 0.      $$

        Under the above conditions we have $\Gm_\infty=\infty$,
$\Gm_\infty^2 \langle L\rangle_\infty^{-1}=\infty$.

        Indeed, application of the Kronecker lemma (see, e.g., \cite{proc12}) yields
$$  \Gm_t^{-2} \langle L\rangle_t=\frac{1}{\Gm_t^2}\int_0^t \Gm_s^2\ell_s^2 \,dK_s\to 0\;\;\text{as}
        \;\;t\to\infty,     $$
since $\int\limits_0^\infty \ell_s^2 \,dK_s<\infty$.

        Solving equation (\ref{proc2.1}), we get
\begin{equation}\label{proc2.2}
    z_t=\Gm_t^{-1}\bigg\{z_0+\int_0^t \Gm_s\ell_s\, dm_s\bigg\},\;\;\;t\geq 0,
\end{equation}
From (\ref{proc2.2}) and CLT for continuous martingales (see, e.e., \cite{proc12}) it directly
follows that
\begin{align}
    & z_t\to 0,\;\;\text{as}\;\;t\to\infty,  \label{proc2.3} \\
    & \Gm_t \langle L\rangle_t^{-1/2} z_t\os{d}{\to} \xi,\;\;\text{as}
        \;\;t\to\infty,\label{proc2.4}
\end{align}
where ``$\os{d}{\to}$'' denotes the convergence in distribution, $\xi$ is a
standard normal random variable ($\xi\in N(0,1)$).

        Let now $\ol{z}=(\ol{z}_t)$ be an averized process defined by (\ref{proc1.4})
with the deterministic function $g=(g_t)_{t\geq 0}$,
$\int\limits_0^\infty g_t dK_t=\infty$, $\int\limits_0^t g_s dK_s<\infty$ for all
$t\geq 0$.

        Denote $B_t=\int\limits_0^t\Gm_s^{-1}d\ve_s$, $\wt{B}_t=
\int\limits_0^t(B_t-B_s)^2 d\langle L\rangle_s$, $\ve_t=\ve_t(g\circ K)$.

\begin{proposition}\label{proc-p2.1}
Suppose that $\langle L\rangle_\infty=\infty$, $\langle L\rangle\circ B_\infty=
\infty$, $\wt{B}_\infty=\infty$.

Then
\begin{equation}\label{proc2.5}
 \ve_t\wt{B}_t^{-1/2}\ol{z}_t \os{d}{\to} \xi,\;\;\text{as}\;\;t\to\infty,
        \;\;\xi\in N(0,1),
\end{equation}
\end{proposition}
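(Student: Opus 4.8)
The plan is to turn the averaged procedure into an explicit stochastic integral against $L$ with a \emph{deterministic} integrand, and then to read off the Gaussian limit from the structure that the deterministic coefficients force on $L$. First I would substitute the closed form (\ref{proc2.2}), namely $z_s=\Gm_s^{-1}(z_0+L_s)$ with $L_s=\int_0^s\Gm_r\ell_r\,dm_r$, into the definition (\ref{proc1.4}) of $\ol z$. Recalling that $dB_s=\Gm_s^{-1}\,d\ve_s$, this gives
$$
    \int_0^t z_s\,d\ve_s=z_0\int_0^t\Gm_s^{-1}\,d\ve_s+\int_0^t\Gm_s^{-1}L_s\,d\ve_s=z_0B_t+\int_0^t L_s\,dB_s .
$$
Since $K$, and hence the deterministic continuous nondecreasing function $B$ (nondecreasing because $g\ge 0$ and $\Gm^{-1}>0$), is of finite variation while $L$ is a continuous local martingale, we have $[B,L]=0$, so integration by parts yields $\int_0^t L_s\,dB_s=B_tL_t-\int_0^t B_s\,dL_s=\int_0^t(B_t-B_s)\,dL_s=:N_t$. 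Writing $\ve_t=\ve_t(g\circ K)$, this produces the clean decomposition
$$
    \ve_t\,\wt B_t^{-1/2}\,\ol z_t=\wt B_t^{-1/2}\big(z_0B_t+N_t\big),\qquad N_t=\int_0^t(B_t-B_s)\,dL_s .
$$
It then remains to prove $\wt B_t^{-1/2}N_t\str{d}{\to}\xi$ and $\wt B_t^{-1/2}z_0B_t\to 0$.

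For the martingale term the key remark is that $\la L\ra_t^{F,P}=\int_0^t\Gm_s^2\ell_s^2\,dK_s$ is deterministic. Hence, by the Dambis--Dubins--Schwarz time change (valid on all of $[0,\infty)$ because $\la L\ra_\infty=\infty$), $L_t=W_{\la L\ra_t}$ for a standard Brownian motion $W$, so $L$ is a Gaussian martingale and $N_t$, being a Wiener integral of the deterministic kernel $s\mapsto B_t-B_s$, is centred Gaussian with $\mathrm{Var}(N_t)=\int_0^t(B_t-B_s)^2\,d\la L\ra_s=\wt B_t$ by the It\^o isometry. Equivalently, one applies the CLT for continuous martingales (as in \cite{proc12}) to the family $M^{(T)}_t=\int_0^t(B_T-B_s)\,dL_s$, $t\le T$, whose normalised quadratic variation $\wt B_T^{-1}\la M^{(T)}\ra_T\equiv 1$ is exactly constant, so the Lindeberg condition is vacuous. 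The additive constant in the shifted characteristic only changes the variance by the factor $1-B_t^2/\wt B_t\to 1$ (by the bias estimate below) and is therefore harmless; in either reading $\wt B_t^{-1/2}N_t\str{d}{\to}\xi$ with $\xi\in N(0,1)$.

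Finally I would dispose of the bias. Because $B_t\uparrow B_\infty\in(0,\infty]$ there are two cases. If $B_\infty<\infty$, then $B_t$ is bounded and $\wt B_\infty=\infty$ gives $B_t\wt B_t^{-1/2}\to 0$ immediately. If $B_\infty=\infty$, I would write $\wt B_t/B_t^2=\int_0^t(1-B_s/B_t)^2\,d\la L\ra_s$ and note that for each fixed $s_0$, $(1-B_{s_0}/B_t)^2\to 1$, whence $\liminf_t\wt B_t/B_t^2\ge\la L\ra_{s_0}$; letting $s_0\to\infty$ and using $\la L\ra_\infty=\infty$ forces $\wt B_t/B_t^2\to\infty$, i.e. $B_t\wt B_t^{-1/2}\to 0$. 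Since $z_0$ is a.s. finite, $\wt B_t^{-1/2}z_0B_t\to 0$ $\Pas$, and Slutsky's theorem yields (\ref{proc2.5}); the hypothesis $\la L\ra\circ B_\infty=\infty$ serves to guarantee the divergence of the normaliser $\wt B_\infty=\infty$ from the primitive growth data. The main obstacle is precisely the $t$-dependence of the integrand in $N_t$, which blocks a naive ``terminal value'' martingale CLT; the remedy---the deterministic time change giving exact normality, or equivalently the triangular-array continuous-martingale CLT with exactly-constant normalised quadratic variation---is the crux, with the bias estimate a secondary but necessary step.
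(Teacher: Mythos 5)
Your proposal is correct and follows the paper's own route in its essentials: the same substitution of (\ref{proc2.2}) into (\ref{proc1.4}), the same integration by parts yielding $\ve_t\wt B_t^{-1/2}\ol z_t=z_0B_t\wt B_t^{-1/2}+\wt B_t^{-1/2}\int_0^t(B_t-B_s)\,dL_s$, and the same treatment of the stochastic term --- the paper fixes an arbitrary sequence $t_n\to\infty$ and applies the CLT for continuous martingales to $M^n(u)=\wt B_{t_n}^{-1/2}\int_0^{t_nu}(B_{t_n}-B_s)\,dL_s$, $u\in[0,1]$, for which $\la M^n\ra_1\equiv 1$, which is exactly your ``triangular-array with constant normalized quadratic variation'' reading; your further observation that $\la L\ra$ is deterministic, so that $N_t$ is a Wiener integral and hence \emph{exactly} Gaussian, is a valid strengthening rather than a deviation. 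The one genuine difference is the bias term: the paper proves $B_t^2/\wt B_t\to 0$ by writing $\wt B_t=2\int_0^t\big(\int_0^s\la L\ra_u\,dB_u\big)\,dB_s$ and applying the Toeplitz lemma twice, and this is precisely where the hypothesis $\la L\ra\circ B_\infty=\infty$ enters its argument; your direct estimate ($B_\infty<\infty$ is trivial, and for $B_\infty=\infty$ one has $\liminf_t\wt B_t/B_t^2\ge\la L\ra_{s_0}-\la L\ra_0$ for every $s_0$, hence $\wt B_t/B_t^2\to\infty$ by $\la L\ra_\infty=\infty$) reaches the same conclusion more elementarily and, as you note, without invoking $\la L\ra\circ B_\infty=\infty$ at all. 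Both arguments are sound; yours buys a slightly leaner set of working hypotheses for this step, while the paper's Toeplitz computation reuses the identity (\ref{proc2.7}) that it needs again in Proposition \ref{proc-p2.2} and Remark \ref{proc-r2.1}.
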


\noindent{\it Proof.\/}
Substituting (\ref{proc2.2}) in (\ref{proc1.4}) and integrating by parts, we get
$$  \ol{z}_t=\frac{z_0 B_t}{\ve_t}+\ve_t^{-1}\int_0^t(B_t-B_s)\,dL_s    $$

Hence
\begin{equation}\label{proc2.6}
\ve_t\wt{B}_t^{-1/2}\ol{z}_t\!=\!z_0 B_t/(\wt{B}_t)^{1/2}\!+
    (\wt{B}_t)^{-1/2}\!\int_0^t(B_t-B_s)\,dL_s\!=\!I_t^1+I_t^2.
\end{equation}
First we will show that
$$  I_t^1\to 0,\;\;\text{as}\;\;t\to\infty.     $$
It is easy to check that
\begin{equation}\label{proc2.7}
    \wt{B}_t=\int_0^t(B_t-B_s)^2 d\langle L\rangle_s=2\int_0^t
        \bigg(\int_0^s\langle L\rangle_u\,dB_u\bigg)\,dB_s.
\end{equation}
We rewrite $(I_t^1)^2$ in the form
$$  (I_t^1)^2=B_t^2(\wt{B}_t)^{-1}=
    \frac{2\int_0^t B_s(\int_0^s\langle L\rangle_u\,dB_u)^{-1}d
        \wt{B}_s}{\wt{B}_t}.    $$
Since $\wt{B}_\infty=\infty$, applying the Toeplitz lemma, we obtain
$$  \lim_{t\to\infty}(I_t^1)^2=\lim_{t\to\infty}
        \frac{B_t}{\int_0^t\langle L\rangle_u\,dB_u}.    $$
Further, as $\int\limits_0^\infty\langle L\rangle_u\,dB_u=\infty$, applying again the
Toeplitz lemma we get
$$  \lim_{t\to\infty}\frac{B_t}{\int_0^t\langle L\rangle_u\,dB_u}=
    \lim_{t\to\infty}\frac{\int_0^t\langle L\rangle_u^{-1}
        \langle L\rangle_u\,dB_u}{\int_0^t\langle L\rangle_u\,dB_u}=
        \lim_{t\to\infty}\frac{1}{\langle L\rangle_t}=0.       $$

It remains to show that
$$  I_t^2 \os{d}{\to} \xi,\;\;\text{as}\;\;t\to\infty,\;\;\xi\in N(0,1).  $$

        For any sequence $t_n\to\infty$ as $n\to\infty$ we define the sequence
of martingales as follows:
$$  M^n(u)=\frac{\int_0^{t_n u}(B_{t_n}-B_s)\,dL_s}{(\int_0^{t_n}(B_{t_n}-
        B_s)^2d\langle L\rangle_s)^{1/2}}, \quad u\in [0,1].      $$
Obviously, $\langle M^n\rangle_1=1$ for each $n\geq 1$, and from the CLT for
continuous martingales we have
$$  M^n(1)=I_{t_n}^2\os{d}{\to} \xi\;\;\text{as}\;\;n\to\infty,\;\;
        \xi\in N(0,1).\;\;\qed      $$

\begin{remark}\label{proc-r2.1}
It should be noted that $\ve_\infty\wt{B}_\infty^{-1/2}=\infty$.

        Indeed, by the Toeplitz lemma,
\begin{gather*}
    \lim_{t\to\infty}\frac{\wt{B}_t}{\ve_t^2}=\lim_{t\to\infty}\frac{\int_0^t
        (\int_0^s\langle L\rangle_u dB_u)\Gm_s^{-1}\ve_s^{-1}d\ve_s^2}
        {\ve_t^2}=
    \lim_{t\to\infty}\frac{1}{\Gm_t\ve_t}\int_0^t\langle L\rangle_s
        \Gm_s^{-1}d\ve_s   \\
    =\lim_{t\to\infty}\frac{1}{\Gm_t\ve_t}\int_0^t\langle L\rangle_s
        \Gm_s^{-2}\Gm_s\,d\ve_s\leq\lim_{t\to\infty}
        \frac{1}{\ve_t}\int_0^t\langle L_s\rangle\Gm_s^{-2}d\ve_s=0.
\end{gather*}
since $\ve_{\infty}=\infty$ and $\langle L\rangle_{\infty} \Gm_{\infty}^{-2}=0$.
\end{remark}

        Define now the process $\ve_t^{(\al)}:=\ve_t(g^{(\al)}\circ K)$ as
follows: Let $(\al_t)_{t\geq 0}$ be a function, $\al_t\geq 0$ for all
$t\geq 0$, and $\us{t\to\infty}{\lim}\al_t=\al$, $0<\al<\infty$. We define
$\ve^{(\al)}$ by the relation
\begin{equation}\label{proc2.8}
    \ve_t^{(\al)}=1+\int_0^t \al_s\bt_s\langle L\rangle_s^{-1}
        \Gm_s^2 dK_s.
\end{equation}
        Note that
\begin{equation}\label{proc2.9}
    \langle L\rangle_t\Gm_t^{-2}\ve_t^{(\al)}g_t^{(\al)}/\bt_t=\al_t.
\end{equation}
        Indeed, it is easily seen that if
$$  \ve_t(\psi)=1+\int_0^t\vf_s dK_s,\;\;\text{then}\;\;\psi_t=
        \frac{\vf_t}{\ve_t(\psi\circ K)}.   $$
Hence, if $\ve_t(g^{(\al)}\circ K)=\ve_t^{(\al)}$, then
$$  g_t^{(\al)}=\al_t\bt_t\langle L\rangle_t^{-1}\Gm_t^2/\ve_t^{(\al)}, $$
and (\ref{proc2.9}) follows.

It should be also noted that for each $(\al_t)_{t\geq 0}$ with $\lim\limits_{t\to \infty} \al_t=\al$,
$$
    \lim_{t\to \infty} \frac{\ve_t^\al} {1+\int_0^t \al \bt_s \la L\ra_s^{-1} \Gm_s^2 d K_s}=1.
$$

\begin{proposition}\label{proc-p2.2}
Let $\ol{z}^{(\al)}=(\ol{z}_t^{{}{(\al)}})_{t\geq 0}$ be an averized process
corresponding to the averaging process $\ve^{(\al)}$ $($see $(\ref{proc1.4}))$, i.e.,
$$  \ol{z}_t^{(\al)}=\frac{1}{\ve_t^{(\al)}}\int_0^t z_s d\ve_s^{(\al)},
        \;\;\;t\geq 0.    $$
Then
$$  \bigg(1+\int_0^t\bt_s\langle L\rangle_s^{-1}\Gm_s^2 dK_s\bigg)^{1/2}
        \ol{z}_t^{(\al)}\os{d}{\to} \sqrt{2}\,\xi,\;\;\text{as}\;\;
        t\to\infty,\;\;\xi\in N(0,1).    $$
\end{proposition}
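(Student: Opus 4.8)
The plan is to deduce Proposition~\ref{proc-p2.2} from Proposition~\ref{proc-p2.1} applied to the particular averaging process $\ve^{(\al)}$ defined by (\ref{proc2.8}), and then to replace the random normalizer $\ve_t^{(\al)}(\wt B_t^{(\al)})^{-1/2}$ produced by Proposition~\ref{proc-p2.1} with the deterministic factor $\Psi_t^{1/2}$, where $\Psi_t:=1+\int_0^t\bt_s\la L\ra_s^{-1}\Gm_s^2\,dK_s$. Writing $B^{(\al)}$, $\wt B^{(\al)}$ for the quantities $B$, $\wt B$ formed from $\ve^{(\al)}$, Proposition~\ref{proc-p2.1} will give $\ve_t^{(\al)}(\wt B_t^{(\al)})^{-1/2}\ol z_t^{(\al)}\os{d}{\to}\xi$, and then by Slutsky's theorem (a sequence converging in distribution times a deterministic sequence with a finite limit)
$$
    \Psi_t^{1/2}\ol z_t^{(\al)}=\Big(\ve_t^{(\al)}(\wt B_t^{(\al)})^{-1/2}\ol z_t^{(\al)}\Big)\cdot\Big(\frac{\Psi_t\wt B_t^{(\al)}}{(\ve_t^{(\al)})^2}\Big)^{1/2}\os{d}{\to}\sqrt2\,\xi ,
$$
provided the deterministic ratio $\Psi_t\wt B_t^{(\al)}/(\ve_t^{(\al)})^2$ tends to $2$. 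Thus the whole proof reduces to verifying the hypotheses of Proposition~\ref{proc-p2.1} for $\ve^{(\al)}$ and to computing this single limit.

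For the computations I would first exploit $d\Gm_s=\bt_s\Gm_s\,dK_s$, so that $\bt_s\Gm_s^2\,dK_s=\frac12\,d(\Gm_s^2)$ and $\Gm_s^{-1}\,d\ve_s^{(\al)}=\al_s\la L\ra_s^{-1}\,d\Gm_s$. Hence, with $W_s:=\int_0^s\al_u\,d\Gm_u$, one gets $\int_0^s\la L\ra_u\,dB_u^{(\al)}=W_s$, so by (\ref{proc2.7}) $\wt B_t^{(\al)}=2\int_0^t W_s\la L\ra_s^{-1}\,dW_s$, while $\ve_t^{(\al)}-1=\int_0^t\Gm_s\la L\ra_s^{-1}\,dW_s$ and $\Psi_t-1=\frac12\int_0^t\la L\ra_s^{-1}\,d(\Gm_s^2)$. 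Since $\la L\ra$ is increasing, $\Psi_t-1\ge\la L\ra_t^{-1}(\Gm_t^2-1)\to\infty$ because $\Gm_t^2\la L\ra_t^{-1}\to\infty$; together with $\al_s\to\al>0$ this yields $\Psi_\infty=\infty$ and $\ve^{(\al)}_\infty=\infty$, so $\ve^{(\al)}$ is a legitimate averaging process. The hypotheses of Proposition~\ref{proc-p2.1} follow similarly: $\la L\ra_\infty=\infty$ is assumed, $\la L\ra\circ B_\infty^{(\al)}=\int_0^\infty\al_s\,d\Gm_s=\infty$ (Toeplitz, since $\Gm_\infty=\infty$), and $\wt B_\infty^{(\al)}=\infty$.

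The heart of the argument, and the step I expect to be the main obstacle, is the asymptotic identity $\wt B_t^{(\al)}/\ve_t^{(\al)}\to 2\al$. Here I would apply the Toeplitz lemma to the two integrals $\ve_t^{(\al)}-1=\int_0^t\Gm_s\la L\ra_s^{-1}\,dW_s$ and $\frac12\wt B_t^{(\al)}=\int_0^t W_s\la L\ra_s^{-1}\,dW_s$, regarded as integrals against the common nonnegative measure $\la L\ra_s^{-1}\,dW_s$: since $\Gm_s/W_s\to1/\al$ (again Toeplitz, as $W_s/\Gm_s\to\al$) and the denominator $\frac12\wt B_t^{(\al)}\to\infty$, the ratio of the two integrals tends to $1/\al$, i.e. $\wt B_t^{(\al)}=2\al(\ve_t^{(\al)}-1)(1+o(1))$. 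The delicate point is precisely this passage from ``ratio of integrands $\to1/\al$'' to ``ratio of integrals $\to1/\al$'', which is why the divergence of $\wt B^{(\al)}$ must be established first. A second, easier Toeplitz application to $\ve_t^{(\al)}-1=\frac12\int_0^t\al_s\la L\ra_s^{-1}\,d(\Gm_s^2)$ versus $\Psi_t-1=\frac12\int_0^t\la L\ra_s^{-1}\,d(\Gm_s^2)$ gives $\ve_t^{(\al)}/\Psi_t\to\al$.

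Combining the two limits, and using that $\ve_t^{(\al)}\to\infty$ so that $\ve_t^{(\al)}-1$ may be replaced by $\ve_t^{(\al)}$,
$$
    \frac{\Psi_t\wt B_t^{(\al)}}{(\ve_t^{(\al)})^2}=\frac{\wt B_t^{(\al)}}{\ve_t^{(\al)}}\cdot\frac{\Psi_t}{\ve_t^{(\al)}}\to 2\al\cdot\frac1\al=2,
$$
which is exactly the deterministic factor required in the first paragraph. Feeding this into the Slutsky argument then yields $\Psi_t^{1/2}\ol z_t^{(\al)}\os{d}{\to}\sqrt2\,\xi$ and completes the proof.
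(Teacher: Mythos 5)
Your proposal is correct and follows essentially the same route as the paper: reduce to Proposition \ref{proc-p2.1} and then show that the deterministic ratio $\ve_t^{(1)}\wt B_t^{(\al)}/(\ve_t^{(\al)})^2$ tends to $2$, splitting it as $(\ve_t^{(1)}/\ve_t^{(\al)})\cdot(\wt B_t^{(\al)}/\ve_t^{(\al)})\to(1/\al)\cdot 2\al$ via the identity $\int_0^s\la L\ra_u\,dB_u^{(\al)}=\int_0^s\al_u\,d\Gm_u$, relation (\ref{proc2.9}) and repeated use of the Toeplitz lemma. Your explicit verification of the hypotheses of Proposition \ref{proc-p2.1} for $\ve^{(\al)}$ (in particular $\wt B^{(\al)}_\infty=\infty$) is a point the paper leaves implicit, but the argument is the same.
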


\begin{proof}
By virtue of Proposition \ref{proc-p2.1}, it is sufficient to show that
\begin{equation}\label{proc2.10}
    \frac{\ve_t^{(1)}}{(\ve_t^{(\al)})^2(\wt{B}_t^{(\al)})^{-1}}\to 2,
        \;\;\text{as}\;\;t\to\infty.
\end{equation}
where $B_t^{(\al)}=\int\limits_0^t\Gm_s^{-1}\,d\ve_s^{(\al)}$,
$\wt{B}_t^{(\al)}=\int\limits_0^t(B_t^{(\al)}-B_s^{(\al)})^2\,d\langle L\rangle_s$.

We have
\begin{gather*}
    \lim_{t\to\infty}
        \frac{\ve_t^{(1)}}{(\ve_t^{(\al)})^2(\wt{B}_t^{(\al)})^{-1}}=
    \lim_{t\to\infty}\frac{\ve_t^{(1)}}{\ve_t^{(\al)}}
        \frac{\wt{B}_t^{(\al)}}{\ve_t^{(\al)}}=
        \frac{1}{\al}\lim_{t\to\infty}
        \frac{\wt{B}_t^{(\al)}}{\ve_t^{(\al)}}   \\
    =\frac{1}{\al}\lim_{t\to\infty}
        \frac{2\int_0^t(\int_0^s\langle L\rangle_u d B_u^{(\al)})
        \Gm_s^{-1}d\ve_s^{(\al)}}{\ve_t^{(\al)}}
    =\!\frac{2}{\al}\lim_{t\to\infty}\frac{1}{\Gm_t}
        \!\int_0^t\langle L\rangle_s dB_s^{(\al)}.
\end{gather*}

        Applying now relation (\ref{proc2.9}) and Toeplitz lemma, we get
\begin{gather*}
    \lim_{t\to\infty}\frac{1}{\Gm_t}\int_0^t\langle L\rangle_s dB_s^{(\al)}=
    \lim_{t\to\infty}\frac{1}{\Gm_t}\int_0^t\langle L\rangle_s
        \Gm_s^{-1} d\ve_s^{(\al)}   \\
    =\lim_{t\to\infty}\frac{1}{\Gm_t}\!\int_0^t\langle L\rangle_s
        \Gm_s^{-2} \ve_s^{(\al)}\!\frac{g_s^{(\al)}}{\bt_s}\Gm_s\bt_s dK_s\!
    =\!\lim_{t\to\infty}\frac{1}{\Gm_t}\!\int_0^t\!\al_s d\Gm_s=\al.\;\;\qedhere
\end{gather*}
\end{proof}

\begin{corollary}\label{proc-c2.1}
Let $\gm=(\gm_t)_{t\geq 0}$ be an increasing process such that $\gm_0=1$,
$\gm_\infty=\infty$ and
$$  \lim_{t\to\infty}\frac{\langle L\rangle_t^{-1}\Gm_t^2}{\gm_t}=
        \wt{\gm}^{-1}\;\;\text{as}\;\;t\to\infty,     $$
where $\wt{\gm}$ is a constant, $0<\wt{\gm}<\infty$. Then
\vskip+0.1cm
        $(1)$ $\gm_t^{1/2}z_t \os{d}{\to} \wt{\gm}^{1/2}\,\xi$, as $t\to\infty$;
\vskip+0.1cm
        $(2)$ $(1+\int\limits_0^t\gm_s\bt_s dK_s)^{1/2}\ol{z}_t^{(\al)}\os{d}{\to}
\sqrt{2\wt{\gm}}\,\xi$ as $t\to\infty$;
\vskip+0.1cm
        $(3)$ if $\gm_s\bt_s\!=\!1$ eventually, then $(1\!+\!K_t)^{1/2}\ol{z}_t^{(\al)}\!\to\!
\sqrt{2\wt{\gm}}\,\xi$ as $t\to\infty$, $\xi\!\in\! N(0,1)$.
\end{corollary}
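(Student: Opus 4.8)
The plan is to derive all three assertions from the weak limits already established---namely Eq.~(\ref{proc2.4}) for the raw process and Proposition~\ref{proc-p2.2} for the averaged one---by combining them with Slutsky's theorem and the Toeplitz lemma applied to purely deterministic normalizers. For (1), I would factor
$$
    \gm_t^{1/2} z_t =\frac{\gm_t^{1/2}\la L\ra_t^{1/2}}{\Gm_t}\cdot
        \Big(\Gm_t\la L\ra_t^{-1/2}z_t\Big),
$$
where the second factor converges in distribution to $\xi$ by Eq.~(\ref{proc2.4}), while the deterministic prefactor equals $\big(\gm_t/(\Gm_t^2\la L\ra_t^{-1})\big)^{1/2}$ and tends to $\wt{\gm}^{1/2}$ by the standing hypothesis $\la L\ra_t^{-1}\Gm_t^2/\gm_t\to\wt{\gm}^{-1}$. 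Slutsky's theorem then gives $\gm_t^{1/2}z_t\os{d}{\to}\wt{\gm}^{1/2}\xi$.

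For (2), the plan is to start from Proposition~\ref{proc-p2.2}, which yields
$$
    \Big(1+\int_0^t\bt_s\la L\ra_s^{-1}\Gm_s^2\,dK_s\Big)^{1/2}
        \ol{z}_t^{(\al)}\os{d}{\to}\sqrt{2}\,\xi,
$$
and then to replace this norming factor by $\big(1+\int_0^t\gm_s\bt_s\,dK_s\big)^{1/2}$. To this end I would write $\bt_s\la L\ra_s^{-1}\Gm_s^2=(\gm_s\bt_s)\cdot\big(\la L\ra_s^{-1}\Gm_s^2/\gm_s\big)$ and apply the Toeplitz lemma along the weighting measure $\gm_s\bt_s\,dK_s$: since $\gm_s\geq\gm_0=1$ and $\int_0^\infty\bt_s\,dK_s=\infty$, this measure carries infinite total mass, and the ratio $\la L\ra_s^{-1}\Gm_s^2/\gm_s\to\wt{\gm}^{-1}$, whence
$$
    \int_0^t\bt_s\la L\ra_s^{-1}\Gm_s^2\,dK_s\Big/
        \int_0^t\gm_s\bt_s\,dK_s\to\wt{\gm}^{-1}.
$$
Consequently the quotient of the two $(\,\cdot\,)^{1/2}$ normalizers tends to $\wt{\gm}^{1/2}$, and a second application of Slutsky's theorem converts the limit $\sqrt{2}\,\xi$ into $\sqrt{2\wt{\gm}}\,\xi$.

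Part (3) then follows by specialization: if $\gm_s\bt_s=1$ eventually (say for $s\geq t_0$), then for $t\geq t_0$ one has $1+\int_0^t\gm_s\bt_s\,dK_s=(1+K_t)+c$ for a constant $c$; since the left-hand side tends to $\infty$ as shown in (2), it follows that $K_t\to\infty$, and therefore $(1+K_t)^{1/2}\big/\big(1+\int_0^t\gm_s\bt_s\,dK_s\big)^{1/2}\to1$. A final application of Slutsky to (2) yields $(1+K_t)^{1/2}\ol{z}_t^{(\al)}\os{d}{\to}\sqrt{2\wt{\gm}}\,\xi$.

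I expect the only genuine obstacle to be the Toeplitz step in (2): one must verify that the weight $\gm_s\bt_s\,dK_s$ has infinite mass, so that the Ces\`aro-type averaging of $\la L\ra_s^{-1}\Gm_s^2/\gm_s$ against it is legitimate; this rests on $\gm_s\geq1$ together with the standing assumption $\int_0^\infty\bt_s\,dK_s=\infty$ of the linear case. All remaining manipulations are deterministic norming identities and routine applications of Slutsky's theorem.
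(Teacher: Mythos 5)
Your proof is correct and follows exactly the route the paper intends: the corollary is stated as an immediate consequence of Eq.~(\ref{proc2.4}) and Proposition~\ref{proc-p2.2}, obtained by swapping normalizers via the Toeplitz lemma (using $\gm_s\geq 1$ and $\int_0^\infty\bt_s\,dK_s=\infty$ to ensure infinite mass) and Slutsky's theorem, which is precisely what you do.
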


\begin{remark}\label{proc-r2.2}
(1) Let $\gm=(\gm_t)_{t\geq 0}:=(\frac{\bt_t}{\ell_t^2})_{t\geq 0}$ be an
increasing process, $\gm_0=1$, $\gm_\infty=\infty$, $d\gm\ll dK$. Then $\gm$
can be represented as the solution of the SDE $d\gm_t=\gm_t \lb_t dK_t$,
$\gm_0=1$, with some $\lb=(\lb_t)_{t\geq 0}$.

Assume that $\lb_t\to 0$ as $t\to\infty$ and $\lb_t/\bt_t\to 0$ as
$t\to\infty$. Then
$$  \lim_{t\to\infty}\frac{\langle L\rangle_t^{-1}\Gm_t^2}{\gm_t}=2.   $$
        Indeed,
$$  \frac{\langle L\rangle_t^{-1}\Gm_t}{\gm_t}=
        \frac{\Gm_t^2\gm_t^{-1}}{\langle L\rangle_t}.   $$
and integration by parts and application of the Toeplitz lemma yield
\begin{gather*}
    \frac{\langle L\rangle_t^{-1}\Gm_t^2}{\gm_t}=
        \frac{\int_0^t2\Gm_s^2\bt_s\gm_s^{-1}dK_s-
        \int_0^t\Gm_s^2\gm_s^{-2}\gm_s\lb_s dK_s}
        {\langle L\rangle_t}     \\
    =2-\frac{1}{\langle L\rangle_t}\int_0^t
        \frac{\lb_s}{\gm_s\ell_s^2}\,d\langle L\rangle_s=
        2-\frac{1}{\langle L\rangle_t}
        \int_0^t\frac{\lb_s}{\bt_s}d\langle L\rangle_s\to 2 \;\;\text{as} \;\; t\to \infty.
\end{gather*}

Thus if we put $\gm_t=\frac{\bt_t}{\ell_t^2}$ in the above Corollary \ref{proc-c2.1}, then all assertions hold true with
$$  \gm_t=\frac{\bt_t}{\ell_t^2},\;\;\;\wt{\gm}=\frac{1}{2};     $$

        (2) Let $\ell_t=\sg\bt_t$, where $\bt_t$ is a decreasing function,
$\bt_t\to 0$ as $t\to\infty$, $d\bt_t=-\bt_t' dK_t$, $\bt_t'>0$.

Then, if
$$  \bt_t'/\bt_t^2\to 0\;\;\text{as}\;\;t\to\infty,  $$
we have
$$  \lim_{t\to\infty}\langle L\rangle_t^{-1}\Gm_t^2\bt_t=2\sg^2.     $$
\end{remark}

        From Proposition \ref{proc-p2.2} immediately follows
$$  (1+K_t)^{1/2}\ol{z}_t^{(\al)}\os{d}{\to} \sqrt{2}\,\sg\,\xi\;\;\text{as}\;\;t\to\infty.  $$

\begin{remark}\label{proc-r2.3}
Summarizing the above statements, we conclude that: as \linebreak $t\to\infty$,
\vskip+0.1cm
        (a) \;$(\ve_t^{(1)})^{1/2}\ol{z}_t^{(\al)}\os{d}{\to}\sqrt{2}\,\xi$;
\vskip+0.1cm
        (b) \;$(\ve_t^{(\al)})^{1/2}\ol{z}_t^{(\al)} \os{d}{\to}
\sqrt{\frac{2}{\al}}\,\xi$;
\vskip+0.1cm
        (c) \;$(\ve_t^{(1)})^{1/2}\ol{z}_t^{(1)}\os{d}{\to} \sqrt{2}\,\xi$;
\vskip+0.1cm
        (d) \;$\Gm_t \langle L\rangle_t^{-1/2}z_t\os{d}{\to} \,\xi$,
\vskip+0.1cm
\hskip-0.3cm where $\xi\in N(0,1)$
\end{remark}

\setcounter{example}{0}
\begin{example}\label{ex1}
Standard ``Linear'' Procedure.

Let $\bt_t=\al\bt(1+K_t)^{-1}$, $\ell_t=\al\sg(1+K_t)^{-1}$, $\al\bt>0$,
$2\al\bt>1$. Then $\Gm_t^2\langle L\rangle_t^{-1}=
\frac{2\al\bt-1}{\al^2\sg^2}(1+K_t)$. Hence from (\ref{proc2.4}) follows
$$  (1+K_t)^{1/2} z_t\os{d}{\to} \frac{\al\sg}{\sqrt{2\al\bt-1}}\,\xi\;\;
    \text{as}\;\;t\to\infty,\;\;\xi\in N(0,1).     $$

        On the other hand, $\ve_t^{(1)}=1+\int\limits_0^t\bt_s
\Gm_s^2\langle L\rangle_s^{-1}\,dK_s=1+\frac{\al^2\sg^2}{\bt(2\al\bt-1)}\,K_t$,
and it follows from Proposition \ref{proc-p2.2} that if we define
$$  \ol{z}_t^{(1)}=\frac{1}{\ve_t^{(1)}}\int_0^t z_s\,d\ve_s^{(1)},\;\;\;
        \ol{z}_t=\frac{1}{1+K_t}\int_0^t z_s\,dK_s,       $$
then
\begin{align*}
    & (1+K_t)^{1/2}\ol{z}_t^{(1)}\os{d}{\to} \sg
        \sqrt{\frac{2\al}{\bt(2\al\bt-1)}}\,\xi\;\;\;\text{as}\;\;\;t\to\infty,  \\
    & (1+K_t)^{1/2}\ol{z}_t \os{d}{\to}\sg
        \sqrt{\frac{2\al}{\bt(2\al\bt-1)}}\,\xi\;\;\;\text{as}\;\;\;t\to\infty,
        \;\;\;\xi\in N(0,1).
\end{align*}
Hence the rate of convergence is the same, but the asymptotic variance of averized procedure $\ol{z}$ is smaller than of the initial one.
\end{example}

\begin{example}\label{ex2}
``Linear'' Procedure with slowly varying gains.

Let $\bt_t=\al\bt(1+K_t)^{-r}$, $\ell_t=\al\sg(1+K_t)^{-r}$, $\al\bt>0$,
$\frac{1}{2}<r<1$. Then the process $\gm=(\gm_t)_{t\geq 0}$ defined
in Remark \ref{proc-r2.2} is $\gm_t=\frac{\bt}{\al\sg^2}(1+K_t)^r$, $d\gm_t=
\frac{r\bt}{\al\sg^2}(1+K_t)^r\frac{dt}{1+K_t}$. Hence
$\lb_t=\frac{r\bt}{\al\sg^2}(1+K_t)^{-1}$, $\lb_t/\bt_t\to 0$ as $t\to\infty$.
From Remark \ref{proc-r2.2} it follows that
\begin{equation}\label{proc2.11}
    \lim_{t\to\infty}\frac{\Gm_t^2\langle L\rangle_t^{-1}}{\gm_t}=2,
\end{equation}
and from (\ref{proc2.4}) we have
$$  (1+K_t)^{r/2}z_t \os{d}{\to}\sg\sqrt{\frac{\al}{2\bt}}\,\xi,\;\;\text{as}\;\;
        t\to\infty,\;\;\xi\in N(0,1).    $$
        On the other hand,
$$  \ve_t^{(1)}=1\!+\!\int_0^t\bt_s\Gm_s^2\langle L\rangle_s^{-1}dK_s=1+
        \!\int_0^t\!\bt_s\gm_s\!
        \frac{\Gm_s^2\langle L\rangle_s^{-1}}{\gm_s}dK_s\!
    =1+\!\frac{\bt^2}{\sg^2}\!\int_0^t\!\frac{\Gm_s^2\langle L\rangle_s^{-1}}
        {\gm_s}dK_s.      $$

        Hence take into the account (\ref{proc2.11}), by the Toeplitz Lemma we have
$$  \frac{\ve_t^{(1)}}{1+K_t}\to 2\frac{\bt^2}{\sg^2}\;\;\;\text{as}\;\;\;t\to\infty.  $$
Therefore from Remark \ref{proc-r2.3}\,(c) we get
$$  (1+K_t)^{1/2}\ol{z}_t^{(1)} \os{d}{\to}\frac{\sg}{\bt}\,\xi\;\;\;
        \text{as}\;\;\;t\to\infty,\;\;\;\xi\in N(0,1).    $$
and
$$  (1+K_t)^{1/2}\ol{z}_t \os{d}{\to}\frac{\sg}{\bt}\,\xi\;\;\;
        \text{as}\;\;\;t\to\infty,\;\;\;\xi\in N(0,1).    $$
\end{example}

Note that if $\al\bt>2$, then the asymptotic variance of $\ol{z}$ is smaller than of the initial one.

\begin{example}\label{ex3}
Let $\bt_t=(1+t)^{-(\frac{1}{2}+\al)}$, where $\al$ is a constant,
$0<\al<\frac{1}{2}$, $\ell_t^2=(1+t)^{-(\frac{3}{2}+\al)}$. Then if we take
$\gm_t=\bt_t/\ell_t^2=(1+t)^{-(\frac{1}{2}+\al)}(1+t)^{\frac{3}{2}+\al}=
1+t$, $d\gm_t=\gm_t\frac{1}{1+t}dt$, then $\lb_t=(1+t)^{-1}$,
$\frac{\lb_t}{\bt_t}=(1+t)^{-1}(1+t)^{\frac{1}{2}+\al}=
(1+t)^{\al-\frac{1}{2}}\to 0$ as $t\to\infty$. Therefore, from Remark \ref{proc-r2.2}\,(1)
follows
$$  \lim_{t\to\infty}\frac{\Gm_t^2\langle L\rangle_t^{-1}}{1+t}=2.   $$
and from Corollary \ref{proc-c2.1}\,(1) we have
$$  (1+t)^{1/2}z_t \os{d}{\to}\sqrt{\frac{1}{2}}\,\xi,\;\;
        \text{as}\;\;t\to\infty,\;\;\xi\in N(0,1).    $$
If we now define
\begin{gather*}
    \ve_t^{(1)}=1+\int_0^t\bt_s\langle L\rangle_s^{-1}
        \Gm_s^2 ds    \\
    =1+\int_0^t\bt_s\gm_s\frac{\Gm_s^2\langle L\rangle_s^{-1}}{\gm_s}ds=
        1+\int_0^t(1+s)^{\frac{1}{2}-\al}
        \frac{\Gm_s^2\langle L\rangle_s^{-1}}{\gm_s}ds,
\end{gather*}
then $\ve_t^{(1)}/(1+t)^{3/2-\al}\to\frac{4}{3-2\al}$, and from Corollary \ref{proc-c2.1}\,(2)
we obtain
$$  (1+t)^{3/2-\al}\ol{z}_t^{(1)}\to\sqrt{\frac{4}{3-2\al}}\,\xi\;\;\;
        \text{as}\;\;\;t\to\infty,\;\;\;\xi\in N(0,1).    $$
\end{example}

In the last two examples the rate of convergence of the averized procedure is higher than of the initial one.

\subsection{Asymptotic properties of $\ol{z}$. General case}\label{proc-s3}

        In this subsection we study the asymptotic properties of the averized
process $\ol{z}=(\ol{z})_{t\geq 0}$ defined by (\ref{proc1.4}), where
$z=(z_t)_{t\geq 0}$ is the strong solution of SDE (\ref{proc1.1}).

        In the sequel we will need the following objects:
$$  \bt_t=-H_t'(0),\;\;\;\bt_t(u)=\begin{cases}
        -\frac{H_t(u)}{u},\;\;&\;\;\text{if}\;\;u\neq 0, \\
        \bt_t,\;\;&\;\;\text{if}\;\;u=0, \end{cases}       $$
$\Gm_t=\ve_t(\bt\circ K)=\exp\big\{\int\limits_0^t\bt_s d K_s\big\}$,
$L_t=\int\limits_0^t\Gm_s\ell_s dm_s$, $\ell_t=\ell_t(0)$, $d\langle m\rangle_t=
dK_t$.

        Assume that processes $K$, $\bt$ and $\ell$ are deterministic. We
rewrite equation (\ref{proc1.1}) in terms of these objects.
\begin{equation}\label{proc3.1}
dz_t=-\bt_t z_t d K_t+\ell_t d m_t+(\bt_t-\bt_t(z_t))z_t dK_t+
        (\ell_t(z_t)-\ell_t)d m_t.
\end{equation}
Further, solving formally the last equation as the linear one w.r.t. $z$, we
get
\begin{equation}\label{proc3.2}
    z_t=\Gm_t^{-1}\bigg[z_0+L_t+\int_0^t\Gm_s d\ol{R}_1(s)+
        \int_0^t\Gm_s d\ol{R}_2(s)\bigg],
\end{equation}
where
\begin{align*}
    \Gm_t & =\exp \bigg( \int_0^t \bt_s\,d S_s\bigg), \\
    L_t & =\int_0^t \Gm_s \ell_s \, dm_s, \\
    d\ol{R}_1(t) &=\big(\bt_t-\bt_t(z_t)\big)z_t d K_t,   \\
    d\ol{R}_2(t) &=\big(\ell_t(z_t)-\ell_t\big)d m_t.
\end{align*}
Consider now the following averaging procedure:
\begin{equation}\label{proc3.3}
    \ol{z}_t=\frac{1}{\ve_t}\int_0^t z_s d\ve_s,
\end{equation}
where the process $\ve_t:=\ve_t=1+\int\limits_0^1\Gm_s^2
\langle L\rangle_s^{-1}\bt_s dK_s$, i.e., is defined by relation (\ref{proc2.8}) with
$\al_t=1$.

   In the sequel it will be assumed that the functions $\bt$, $\ell$, $K$,
$g$ satisfy all conditions imposed on the corresponding functions in
Propositions \ref{proc-p2.1} and \ref{proc-p2.2}.

        Let $\gm=(\gm)_{t\geq 0}$ be an increasing function such that
$\gm_0=1$, $\gm_\infty=\infty$ and $\us{t\to\infty}{\lim}
\frac{\Gm_t^2\langle L\rangle_t^{-1}}{\gm_t}=\wt{\gm}^{-1}$.

\begin{theorem}\label{proc-t3.1}
Suppose that $\gm_t^\dl z_t^2\to 0$ as $t\to\infty$ for all $\dl$,
$0<\dl<\dl_0$, $0<\dl_0\leq 1$. Assume that the
following conditions are satisfied:
\vskip+0.2cm
{\rm (i)} there exists $\dl$, $0<\dl<\dl_0/2$ such that
$$  \int_0^\infty\ve_t^{-1/2}\gm_t^{-\dl}\big|\bt_t(z_t)-\bt_t\big|
    d K_t<\infty,\;\;\Pas;        $$

{\rm (ii)} $\frac{\langle N\rangle_t}{\langle L\rangle_t}\to 0$ as
$t\to\infty$, where
$N_t=\int\limits_0^t\Gm_s\big(\ell_s(z_s)-\ell_s\big)dm_s$.
\vskip+0.2cm
Then
$$  \ve_t^{1/2}\ol{z}_t \os{d}{\to}\;\;\sqrt{2}\,\xi\;\;\;\text{as}\;\;\;
        t\to\infty,  \;\;\; \xi\in N(0,1).    $$
\end{theorem}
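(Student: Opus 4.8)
The plan is to treat Theorem~\ref{proc-t3.1} as the ``general case'' counterpart of Proposition~\ref{proc-p2.2}: I would isolate a linear main term that already carries the Gaussian limit and two remainder terms generated by $\ol R_1$ and $\ol R_2$ in the representation (\ref{proc3.2}), showing the former gives $\sqrt2\,\xi$ and the latter are negligible. First I would substitute (\ref{proc3.2}) into the averaging procedure (\ref{proc3.3}). Writing $z_s=\Gm_s^{-1}(z_0+L_s+\wt R_{1,s}+\wt R_{2,s})$ with $\wt R_{i,s}=\int_0^s\Gm_u\,d\ol R_i(u)$ and integrating by parts exactly as in the proof of Proposition~\ref{proc-p2.1} (all processes here being continuous), I obtain
\[
    \ve_t^{1/2}\ol z_t=\ve_t^{-1/2}z_0B_t+\ve_t^{-1/2}\int_0^t(B_t-B_s)\,dL_s+J_t^1+J_t^2,
\]
where $J_t^i=\ve_t^{-1/2}\int_0^t(B_t-B_s)\,d\wt R_{i,s}$ and $B_t=\int_0^t\Gm_s^{-1}\,d\ve_s$. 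The first two terms are exactly $\ve_t^{1/2}$ times the averaged linear solution, so by Proposition~\ref{proc-p2.2} applied with $\al_t\equiv1$ (for which the normalizer is precisely $\ve_t^{1/2}$, since then $\ve_t=\ve_t^{(1)}$) their sum converges in distribution to $\sqrt2\,\xi$.

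At this stage I would record the two quantitative facts extracted in the linear case: $\wt B_t/\ve_t\to2$ (from the proof of Proposition~\ref{proc-p2.2}) and $B_t^2/\wt B_t\to0$ (from the proof of Proposition~\ref{proc-p2.1}), whence $\ve_t^{-1/2}B_t\to0$. The last relation immediately disposes of the $z_0$ term $\ve_t^{-1/2}z_0B_t$. It therefore remains only to prove $J_t^1\str{P}{\to}0$ and $J_t^2\str{P}{\to}0$.

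For the martingale remainder $J_t^2$ I would argue as in the treatment of $R^{(3)}$ in Theorem~\ref{2-t3.1}. For fixed $t$, $J_t^2$ is $\ve_t^{-1/2}$ times the terminal value of the continuous martingale $s\mapsto\int_0^s(B_t-B_r)\Gm_r(\ell_r(z_r)-\ell_r)\,dm_r$, whose characteristic at $s=t$ equals $\int_0^t(B_t-B_s)^2\,d\la N\ra_s$ because $d\la m\ra=dK$. Hence its conditional variance is $V_t=\ve_t^{-1}\int_0^t(B_t-B_s)^2\,d\la N\ra_s$. Using the identity (\ref{proc2.7}) and $\wt B_t=\int_0^t(B_t-B_s)^2\,d\la L\ra_s\sim2\ve_t$, I would write $V_t\sim2\big(\int_0^t(B_t-B_s)^2\,d\la N\ra_s\big)\big/\big(\int_0^t(B_t-B_s)^2\,d\la L\ra_s\big)$ and conclude $V_t\to0$ from condition~(ii), $\la N\ra_t/\la L\ra_t\to0$, by a Toeplitz argument against the decreasing weight $(B_t-B_s)^2$. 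The Lenglart--Rebolledo inequality then yields $J_t^2\str{P}{\to}0$.

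For the drift remainder $J_t^1=\ve_t^{-1/2}\int_0^t(B_t-B_s)\Gm_s(\bt_s-\bt_s(z_s))z_s\,dK_s$ I would first exploit the rate of convergence: since $\dl<\dl_0/2$ gives $2\dl<\dl_0$, the hypothesis forces $\gm_s^{2\dl}z_s^2\to0$, i.e. $|z_s|=o(\gm_s^{-\dl})$. Substituting this and invoking $\gm\simeq\Gm^2\la L\ra^{-1}$ recasts the integrand into the form appearing in condition~(i), after which an integration by parts combined with the Toeplitz lemma (in the spirit of the $R^{(2)}$ estimate of Theorem~\ref{2-t3.1}) gives $J_t^1\to0$ $\Pas$ This drift estimate is the main obstacle: the delicate point is the precise bookkeeping matching the growth of $\Gm$, $\la L\ra$, $\ve$ and $\gm$ --- using $\Gm_s(B_t-B_s)\le\ve_t-\ve_s$, $\ve_t^{-1/2}B_t\to0$ and $\gm\simeq\Gm^2\la L\ra^{-1}$ --- so that the decay $|z_s|=o(\gm_s^{-\dl})$ together with the finiteness asserted in condition~(i) exactly absorbs the term. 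Lining up these exponents, rather than settling for the cruder bounds that leave a residual power of $\ve_t$ or $\gm_t$, is where the real work lies.
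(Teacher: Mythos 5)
Your overall route is the paper's: the same decomposition of $\ve_t^{1/2}\ol{z}_t$ into the linear main term (handled by Proposition \ref{proc-p2.2} with $\al\equiv1$, using $\wt{B}_t/\ve_t\to2$), the $z_0$-term (killed by $\ve_t^{-1/2}B_t\to0$), and the two remainders; and your treatment of the martingale remainder $J^2$ --- showing the normalized characteristic $\ve_t^{-1}\int_0^t(B_t-B_s)^2\,d\la N\ra_s\to0$ from condition (ii) via integration by parts and the Toeplitz lemma, then concluding by Lenglart --- is exactly what the paper does. The genuine gap is the drift remainder $J^1$, which you explicitly leave open (``where the real work lies''). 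The one concrete bound you float, $\Gm_s(B_t-B_s)\le\ve_t-\ve_s$, does not close it: combined with $|z_s|\le\const(\om)\gm_s^{-\dl}$ it yields only $|J_t^1|\le\const(\om)\,\ve_t^{1/2}\int_0^t\gm_s^{-\dl}|\bt_s-\bt_s(z_s)|\,dK_s$, and condition (i), which carries the \emph{decaying} weight $\ve_s^{-1/2}$ inside the integral, cannot absorb the growing prefactor $\ve_t^{1/2}$ coming from the contribution of any fixed initial time window.

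The paper closes this step by a two-stage averaging argument rather than a pointwise bound on the kernel $(B_t-B_s)$. Writing $dB_s=\Gm_s^{-1}\,d\ve_s=2\Gm_s^{-1}\ve_s^{1/2}\,d\ve_s^{1/2}$ and applying the Toeplitz lemma with the weights $d\ve_s^{1/2}$ (total mass $\ve_t^{1/2}\to\infty$), the convergence $\ve_t^{1/2}R_t^1\to0$ is reduced to that of the unweighted Ces\`aro-type quantity
$$
    A_t=\frac{1}{\Gm_t\ve_t^{1/2}}\int_0^t\Gm_s\,d\ol{R}_1(s);
$$
then, bounding $|z_s|\le\const(\om)\gm_s^{-\dl}$ (legitimate since $2\dl<\dl_0$) and inserting $1=\ve_s^{1/2}\ve_s^{-1/2}$,
$$
    |A_t|\le\const(\om)\,\frac{1}{\Gm_t\ve_t^{1/2}}\int_0^t\Gm_s\ve_s^{1/2}
        \Big(\ve_s^{-1/2}\gm_s^{-\dl}\big|\bt_s-\bt_s(z_s)\big|\Big)dK_s\to0\;\;\Pas
$$
by Kronecker's lemma applied with the increasing normalizer $\Gm_t\ve_t^{1/2}$, the integral $\int_0^\infty\ve_s^{-1/2}\gm_s^{-\dl}|\bt_s-\bt_s(z_s)|\,dK_s$ being finite precisely by condition (i). This Toeplitz-then-Kronecker mechanism (in which the relation $\gm\simeq\Gm^2\la L\ra^{-1}$ you invoke plays no role) is the missing piece of your argument; the rest of your proposal is sound and coincides with the paper's proof.
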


\begin{proof}
Substituting (\ref{proc3.2}) in (\ref{proc3.3}), we obtain
\begin{equation}\label{proc3.4}
    \ol{z}_t=\frac{z_0 B_t}{\ve_t}+\frac{1}{\ve_t}\int_0^t L_s dB_s+R_t^1+
        R_t^2,
\end{equation}
where
$$  R_t^i=\frac{1}{\ve_t}\int_0^t\int_0^s\bigg(L_u d\ol{R}_i(u)\bigg)dB_s,\;\;
        i=1,2,    $$
$dB_t\equiv\Gm_t^{-1}d\ve_t$.

        Integration of the second term in (\ref{proc3.4}) by parts results in
\begin{equation}\label{proc3.5}
    \ol{z}_t=\frac{z_0 B_t}{\ve_t}+\frac{1}{\ve_t}\int_0^t(B_t-B_s)dL_s+
        R_t^1+R_t^2.
\end{equation}
        Denoting $\wt{B}_t=\int_0^t(B_t-B_s)^2 d\langle L\rangle_s$, we have
\begin{equation}\label{proc3.6}
\ve_t\wt{B}_t^{-1/2}\ol{z}_t=z_0\frac{B_t}{(\wt{B}_t)^{1/2}}+
        \frac{\int_0^t(B_t-B_s)dL_s}{(\wt{B}_t^{1/2})}+
        \frac{R_t^1}{(\wt{B}_t)^{1/2}}+\frac{R_t^2}{(\wt{B}_t)^{1/2}}.
\end{equation}

        As is seen, the first two terms in the right-hand side of (\ref{proc3.6})
coincide with those in (\ref{proc2.6}), and since by our assumption the conditions of
Propositions \ref{proc-p2.1} and \ref{proc-p2.2} are satisfied, taking into the account (\ref{proc2.10}) with
$\al=1$ one can conclude that it suffices to show that
\begin{equation}\label{proc3.7}
    \ve_t^{1/2} R_t^i \os{P}{\to} 0,\;\;\text{as}\;\;t\to\infty,\;\;i=1,2.
\end{equation}

        Let us investigate the case $i=1$.
\begin{gather*}
    \ve_t^{1/2} R_t^1=\frac{1}{\ve_t^{1/2}}\int_0^t\bigg(\int_0^s\Gm_u
        d\ol{R}_1(u)\bigg)dB_s=\frac{1}{\ve_t^{1/2}}\int_0^t
        \bigg(\int_0^s\Gm_u d\ol{R}_1(u)\bigg)\Gm_s^{-1}d\,\ve_s   \\
    =\frac{2}{\ve_t^{1/2}}\int_0^t
        \bigg(\int_0^s\Gm_u d\ol{R}_1(u)\bigg)\Gm_s^{-1}\ve_s^{1/2}
        d\ve_s^{1/2}.
\end{gather*}

        Since $\ve_t$ is an increasing process, $\ve_\infty=\infty$, by
virtue of the Toeplitz Lemma it is sufficient to show that
$$  A_t=\frac{1}{\Gm_t\ve_t^{1/2}}\int_0^t\Gm_s d\ol{R}_1(s)\to 0,\;\;
        \text{as}\;\;t\to\infty,\;\; \Pas      $$

        For all $\dl$, $0<\dl<\dl_0/2$, since $\gm_t^\dl|z_t|\to 0$ as $t\to \infty$, we have
\allowdisplaybreaks
\begin{gather*}
    |A_t|\leq\frac{1}{\Gm_t\ve_t^{1/2}}\int_0^t\Gm_s|\bt_s-\bt_s(z_s)|
        |z_s|dK_s \\
    \leq\const(\om)\frac{1}{\Gm_t\ve_t^{1/2}}\int_0^t\Gm_s\gm_s^{-\dl}
        |\bt_s-\bt_s(z_s)|dK_s   \\
    =\const(\om)\frac{1}{\Gm_t\ve_t^{1/2}}\int_0^t\Gm_s\ve_s^{1/2}
        \ve_s^{-1/2}\gm_s^{-\dl}|\bt_s-\bt_s(z_s)|dK_s
\end{gather*}
Now the desirable convergence $A_t\to 0$ as $t\to\infty$ follows from
condition (i) and the Kronecker lemma applied to the last term of the
previous inequalities.

        Consider now the second term
\begin{equation} \label{proc3.8}
    \ve_t^{1/2} R_t^2=\frac{1}{\ve_t^{1/2}}\int_0^t\bigg(\int_0^s\Gm_u
        \big(\ell_u(z_u)-\ell_u\big) dm_u\bigg)\Gm_s^{-1}d\ve_s.
\end{equation}

        Denoting $N_t=\int\limits_0^t\Gm_s(\ell_s(z_s)-\ell_s) dm_s$ and integrating
by parts, from (\ref{proc3.8}) we get
$$  \ve_t^{1/2} R_t^2=\frac{1}{\ve_t^{1/2}}\int_0^t(B_t-B_s)dN_s.     $$

        Further, for any sequence $t_n$, $t_n\to\infty$ as $n\to\infty$ let
us consider a sequence of martingales $Y_u^n$, $u\in[0,1]$ defined as follows:
$$  Y_u^n=\frac{1}{\ve_{t_n}^{1/2}}\int_0^{t_n u}(B_{t_n}-B_s)dN_s,\;\;
        \langle Y^n\rangle_1=\frac{1}{\ve_{t_n}}\int_0^{t_n}(B_{t_n}-
        B_s)^2 d\langle N\rangle_s.      $$

        Now, if we show that $\langle Y^n\rangle_1 \os{P}{\to} 0$ as $n\to\infty$,
then from the well-known fact that
$\langle Y^n\rangle_1  \os{P}{\to} 0\Rightarrow Y_1^n \os{P}{\to} 0$
(see, e.g., \cite{proc12}) we get $\ve_{t_n}^{1/2}R_{t_n}^2\to 0$ as $n\to\infty$, and
hence $\ve_t^{1/2}R_t^2\to 0$, as $t\to\infty$.

        Thus we have to show that
$$  \frac{1}{\ve_t}\int_0^t(B_t-B_s)^2 d\langle N\rangle_s\to 0\;\;
        \text{as}\;\;t\to\infty,\;\;\Pas     $$

           Using the relation $\int\limits_0^t(B_t-B_s)^2 d\langle N\rangle_s=
2\int\limits_0^t\big(\int\limits_0^s\langle N\rangle_u dB_u\big)dB_s$, we have to show:
\begin{equation}\label{proc3.9}
\frac{1}{\ve_t}\!\int_0^t\!(B_t-B_s)^2 d\langle N\rangle_s\!=\!
        2\frac{1}{\ve_t}\!\int_0^t\bigg(\!\int_0^s\!\langle N\rangle_u
        dB_u\bigg)\Gm_s^{-1}d\ve_s\!\to\! 0\;\;\text{as}\;\;t\!\to\!\infty
\end{equation}
Applying the Toeplitz lemma to (\ref{proc3.9}) it suffices to show that
\begin{equation}\label{proc3.10}
    \frac{1}{\Gm_t}\int_0^t\langle N\rangle_s dB_s\to 0\;\;
        \text{as}\;\;t\to\infty,\;\;\Pas
\end{equation}
But
\begin{equation}\label{proc3.11}
 \frac{1}{\Gm_t}\int_0^t\langle N\rangle_s dB_s=
        \frac{1}{\Gm_t}\int_0^t\langle N\rangle_s \Gm_s^{-1} d\ve_s=
        \frac{1}{\Gm_t}\int_0^t\langle N\rangle_s \la L\ra_s^{-1}d\Gm_s
\end{equation}
(recall that $d\ve_s=\Gm_s^2\la L\ra_s^{-1} \bt_s d K_s$).

        Applying again the Toeplitz lemma to (\ref{proc3.11}) we can see that  (\ref{proc3.10})
follows from condition (ii).
\end{proof}

\begin{corollary}\label{proc-c3.1}
Let $H_t(u)=-\bt_t u+v_t(u)$, where for each $t\in[0,\infty)$, \linebreak
$|\frac{v_t(u)}{u^2}-v_t|\to 0$ as $u\to 0$, $\Pas$

        Assume that the following condition is satisfied:

{\rm (i$')$} there exists $\dl$, $0<\dl<\dl_0$ such that
$$  \int_0^\infty\ve_t^{1/2}\gm_t^{-2\dl}|v_t|dK_t<\infty.     $$

Then condition {\rm (i)} of Theorem $\ref{proc-t3.1}$ is satisfied.
\end{corollary}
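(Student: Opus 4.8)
The plan is to reduce condition (i$'$) directly to condition (i) of Theorem~\ref{proc-t3.1} by rewriting $\bt_t(z_t)-\bt_t$ in terms of $v_t$ and then exploiting the a.s.\ rate of convergence $\gm_t^\dl z_t^2\to 0$. First I would record the elementary identity coming from the definition of $\bt_t(u)$: since $H_t(u)=-\bt_t u+v_t(u)$, for $u\neq 0$ one has $\bt_t(u)=-H_t(u)/u=\bt_t-v_t(u)/u$, and (because $H_t(0)=0$ forces $v_t(0)=0$) the difference $\bt_t(z_t)-\bt_t=-v_t(z_t)/z_t$ is well defined for every $t$, being $0$ where $z_t=0$. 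Hence
$$
    |\bt_t(z_t)-\bt_t|=\frac{|v_t(z_t)|}{|z_t|}=\Big|\frac{v_t(z_t)}{z_t^2}\Big|\,|z_t|.
$$

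Next I would use the hypothesis $\gm_t^\dl z_t^2\to 0$ for all $\dl<\dl_0$. Since $\gm_t\geq\gm_0=1$, this gives $z_t\to 0$ $(\Pas)$ and, more precisely, $|z_t|\leq\const(\om)\,\gm_t^{-\dl}$ eventually, for every $\dl<\dl_0/2$. Because $z_t\to 0$, the assumption $v_t(u)/u^2\to v_t$ as $u\to 0$ lets me bound $|v_t(z_t)/z_t^2|\leq\const(\om)\,|v_t|$ along the trajectory, eventually in $t$; combining the two estimates yields, eventually,
$$
    |\bt_t(z_t)-\bt_t|\leq\const(\om)\,|v_t|\,|z_t|\leq\const(\om)\,|v_t|\,\gm_t^{-\dl},
    \qquad 0<\dl<\dl_0/2 .
$$

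Finally I would fix the $\dl$ supplied by (i$'$) (after restricting it to $(0,\dl_0/2)$, where the argument is tight) and substitute the last bound into the integral of condition (i). Using the trivial inequality $\ve_t^{-1/2}\leq\ve_t^{1/2}$ (valid since $\ve_t\geq 1$) together with $\gm_t\geq 1$, I obtain
$$
    \int_0^\infty\ve_t^{-1/2}\gm_t^{-\dl}\,|\bt_t(z_t)-\bt_t|\,dK_t
    \leq\const(\om)\int_0^\infty\ve_t^{1/2}\gm_t^{-2\dl}|v_t|\,dK_t<\infty\;\;(\Pas),
$$
which is exactly condition (i); note that the factor $\gm_t^{-\dl}$ coming from $|z_t|$ supplies the \emph{second} power of $\gm_t^{-\dl}$ needed to reach the exponent $-2\dl$ appearing in (i$'$).

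The hard part will be the middle step: the convergence $v_t(u)/u^2\to v_t$ is assumed only pointwise in $t$, whereas along the random trajectory the argument $u=z_t(\om)$ tends to $0$ at a $t$-dependent rate, so a uniform-in-$t$ control of $v_t(z_t)/z_t^2$ is what actually must be justified before the bound $|v_t(z_t)/z_t^2|\leq\const(\om)|v_t|$ can be asserted eventually. This is harmless in the separable situation $v_t(u)=c_t\,\widetilde v(u)$ with $\widetilde v(u)/u^2\to\widetilde c$ (as in Example~\ref{2-e6}), where the $t$-dependence factors out and $|v_t(z_t)/z_t^2|\leq 2|v_t|$ once $|z_t|$ is small; in general one needs an equicontinuity-type hypothesis of this kind. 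A secondary point to watch is the exponent bookkeeping, since the clean matching above forces the $\dl$ of (i$'$) into $(0,\dl_0/2)$, which is where I expect the statement to be effectively applied.
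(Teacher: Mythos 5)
Your proof is correct and follows essentially the same route as the paper's: the identity $|\bt_t(z_t)-\bt_t|=|v_t(z_t)/z_t^2|\,|z_t|$, the eventual bound $|z_t|\leq\const(\om)\gm_t^{-\dl}$ for $\dl<\dl_0/2$, and the eventual bound $|v_t(z_t)/z_t^2|\leq\const(\om)|v_t|$, combined to reach the exponent $-2\dl$. The two caveats you flag (the implicit uniformity in $t$ behind $|v_t(z_t)/z_t^2|\leq\const(\om)|v_t|$, and the effective restriction of the $\dl$ in (i$'$) to $(0,\dl_0/2)$) are present, unremarked, in the paper's own proof as well.
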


\begin{proof}
Since $|\bt_t(u)-\bt_t|=|\frac{v_t(u)}{u}|$, we have for $\dl$,
$0<\dl<\frac{\dl_0}{2}$,
\begin{gather*}
    \int_0^\infty\ve_t^{1/2}\gm^{-\dl}|\bt_s(z_t)-\bt_t|dK_t\leq
        \int_0^\infty\ve_t^{1/2}\gm_t^{-\dl}
        \Big|\frac{v_t(z_t)}{z_t^2}\Big|
        |z_t|dK_t\\
    \leq\const(\om)\int_0^\infty\ve_t^{1/2}\gm_t^{-2\dl}
        \Big|\frac{v_t(z_t)}{z_t^2}\Big|dK_t \\
    \leq\const(\om)\int_0^\infty\ve_t^{1/2}\gm_t^{-2\dl}|v_t|dK_t<\infty.
            \;\;\;\qedhere
\end{gather*}
\end{proof}

\begin{corollary}\label{proc-c3.2}
Let $\ell_t(u)-\ell_t=\om_t(u)$, where for each $t\in[0,\infty)$
$$  \Big|\frac{\om_t(u)}{u}-\om_t\Big|\to 0\;\;\;\text{as}\;\;\;
        u\to 0,\;\;\Pas,  $$
Assume that the condition below is satisfied:

{\rm (ii$')$} there exists $\dl$, $0<\dl<\dl_0$ such that
$$  \frac{1}{\langle L\rangle_t}\int_0^t\Gm_s^2\gm_s^{-\dl}|\om_s|^2
        ds\to 0,\;\;\;\text{as}\;\;\;t\to\infty,
        \;\;\;(\Pas).  $$
Then condition {\rm (ii)} of Theorem $\ref{proc-t3.1}$ is satisfied.
\end{corollary}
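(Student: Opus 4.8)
The plan is to verify condition (ii) of Theorem \ref{proc-t3.1}, i.e. $\la N\ra_t/\la L\ra_t\to 0$ as $t\to\infty$, where $N_t=\int_0^t\Gm_s(\ell_s(z_s)-\ell_s)\,dm_s=\int_0^t\Gm_s\om_s(z_s)\,dm_s$. Since in this subsection $d\la m\ra_t=dK_t$, the first step is simply to write the square characteristic explicitly,
$$ \la N\ra_t=\int_0^t\Gm_s^2\,\om_s^2(z_s)\,dK_s, $$
so that the whole problem reduces to bounding the integrand $\Gm_s^2\om_s^2(z_s)$ by a constant multiple of the integrand $\Gm_s^2\gm_s^{-\dl}|\om_s|^2$ appearing in (ii$'$), and then invoking (ii$'$) after division by $\la L\ra_t$.

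The bound on $\om_s^2(z_s)$ I would produce in two moves, exactly parallel to the proof of Corollary \ref{proc-c3.1}. First, writing $\om_s(z_s)=\frac{\om_s(z_s)}{z_s}\,z_s$ and using the assumed local linearity $\big|\om_s(u)/u-\om_s\big|\to 0$ as $u\to 0$ together with $z_s\to 0$ $\Pas$, one gets that eventually $\big|\om_s(z_s)/z_s\big|\le\const(\om)\,|\om_s|$, hence $\om_s^2(z_s)\le\const(\om)\,|\om_s|^2\,z_s^2$. Second, the rate hypothesis $\gm_s^\dl z_s^2\to 0$ (valid for the $\dl$, $0<\dl<\dl_0$, fixed in (ii$'$)) shows that $\gm_s^\dl z_s^2$ is bounded, i.e. $z_s^2\le\const(\om)\,\gm_s^{-\dl}$ eventually. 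Combining the two,
$$ \om_s^2(z_s)\le\const(\om)\,\gm_s^{-\dl}\,|\om_s|^2\qquad\text{eventually}, $$
so that $\la N\ra_t\le\const(\om)\int_0^t\Gm_s^2\gm_s^{-\dl}|\om_s|^2\,dK_s$, up to a finite additive contribution coming from the bounded initial time segment.

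Dividing by $\la L\ra_t$ and letting $t\to\infty$ then gives
$$ \frac{\la N\ra_t}{\la L\ra_t}\le\const(\om)\,\frac{1}{\la L\ra_t}\int_0^t\Gm_s^2\gm_s^{-\dl}|\om_s|^2\,dK_s+o(1)\longrightarrow 0 $$
by condition (ii$'$), where the $o(1)$ accounts for the bounded initial segment divided by $\la L\ra_t\to\infty$. This is precisely condition (ii) of Theorem \ref{proc-t3.1}, which is what we must establish.

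The only genuinely delicate point is the first move above: the hypothesis $\big|\om_s(u)/u-\om_s\big|\to 0$ is a limit in $u$ for each fixed $s$, whereas it must be applied along the random diagonal $u=z_s(\om)$ with $s\to\infty$. I expect this to be the main obstacle, and I would handle it exactly as in Corollary \ref{proc-c3.1}: fix a trajectory in the full-probability set on which $z_s\to 0$, and use the local expansion to absorb the ratio $\om_s(z_s)/z_s$ into $\const(\om)\,|\om_s|$ for all large $s$. Everything else — the explicit form of $\la N\ra$, the rate estimate, and the final division — is routine.
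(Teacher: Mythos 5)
Your proof is correct and follows essentially the same route as the paper's: write $\la N\ra_t=\int_0^t\Gm_s^2\om_s^2(z_s)\,dK_s$, factor $\om_s(z_s)=\frac{\om_s(z_s)}{z_s}\,z_s$, absorb the ratio into $\const(\om)\,|\om_s|$ via the local linearity and $z_s\to 0$, bound $z_s^2\le\const(\om)\,\gm_s^{-\dl}$ from the rate hypothesis, and conclude by (ii$'$). The extra care you take with the ``eventually'' qualifier and the bounded initial segment is a refinement of, not a departure from, the paper's argument.
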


\begin{proof}
For all $\dl$, $0<\dl<\dl_0$ we have
\begin{gather*}
    \langle N\rangle_t=\int_0^t\Gm_s^2(\ell_s(z_s)-\ell_s)^2 dK_s=
        \int_0^t\Gm_s^2\Big(\frac{\ell_s(z_s)-\ell_s}{z_s}\Big)^2
        z_s^2 dK_s\\
    \leq\const(\om) \int_0^t\Gm_s^2\gm_s^{-\dl}|\om_s|^2 ds,
\end{gather*}
since $\gm_t^\dl z_t^2\to 0$ as $t\to\infty$, $\Pas$, and
$$  \Big|\frac{\ell_t(z_t)-\ell_t}{z_t}-\om_t\Big|=
        \Big|\frac{\om_t(z_t)}{(z_t)}-\om_t\Big|\to 0\;\;\text{as}\;\;
        t\to\infty.      $$

        Finally, we can conclude that the assertion of Theorem \ref{proc-t3.1} is valid
if we replace conditions (i), (ii) by (i$^{\prime})$, (ii$^{\prime})$,
respectively.
\end{proof}

\begin{example}\label{ex4}
Averaging Procedure for RM Stochastic Approximation Algorithm with Slowly
Varying Gain. 

        Let $H_t(u)=\frac{\al}{(1+K_t)^r}R(u)$, where $\frac{1}{2}<r<1$,
$R(u)=-\bt u+v(u)$, where $v(u)=0(u^2)$ as $u\to 0$,
$\ell_t=\frac{\sg_t}{(1+K_t)^r}$, $\sg_t^2$ is deterministic,
$\sg_t^2\to\sg^2$ as $t\to\infty$, $K=(K_t)$ is a continuous increasing
function with $K_\infty=\infty$. That is, we consider the following SDE:
$$
    z_t=z_0+\int_0^t \frac{\al}{(1+K_s)^r}\, R(z_s)\, dK_s +
        \int_0^t \frac{\sg_t}{(1+K_t)^r}\, d m_t
$$
with $d\la m\ra_t= dK_t$.

        If $r>\frac{4}{5}$, then according to Example 6 of Section 2
$$  (1+K_t)^{r/2}z_t \os{d}{\to} \sqrt{\frac{\al\sg^2}{2\bt}}\,\xi,\;\;\;
        \text{as}\;\;\;t\to\infty,\;\;\xi\in N(0,1),     $$
and moreover, for all $\dl$, $0<\dl<\frac{\dl_0}{2}$, $\dl_0=2-\frac{1}{r}$,
$$  (1+K_t)^\dl z_t\to 0\;\;\;
        \text{as}\;\;\;t\to\infty\;\;(\Pas),     $$

Thus for the convergence
$$  (1+K_t)^{1/2}\ol{z}_t \os{d}{\to} \sqrt{\frac{\sg^2}{\bt^2}}\,\xi\;\;\;
        \text{as}\;\;\;t\to\infty,\;\;\xi\in N(0,1),     $$
it is sufficient to verify condition (i$^{\prime})$ of Theorem \ref{proc-t3.1},
since condition (ii) is satisfied trivially.
\end{example}

        In this example the object $v_t(u)$ defined in Corollary \ref{proc-c3.1} is
$$  v_t(u)=\frac{\al v(u)}{(1+K_t)^r},       $$
and for condition $(i^{\prime})$ of Corollary \ref{proc-c3.1} to be satisfied it is
sufficient to require the following: there exists $\dl$, $0<\dl<\dl_0$,
$\dl_0=2-\frac{1}{r}$ such that
$$  \int_0^t(1+K_t)^{1/2}(1+K_t)^{-2\dl}(1+K_t)^{-r}dK_t<\infty      $$
or equivalently, there exists $\dl$, $0\!<\!\dl\!<\!\dl_0$,
$\dl_0\!=\!r-\frac{1}{r}$ such that $r(1+\dl)-\frac{1}{2}\!>~1$.

        It is not difficult to check that if $r>\frac{5}{6}$ such a $\dl$
does exist.


\end{document}